\tikzset{->-/.style={decoration={
  markings,
  mark=at position 0.5 with {\arrow{>}}},postaction={decorate}}}
\newcounter{stefancomments}
\newcommand{\mlabel}[2]{(#2)\def\@currentlabel{#2}\label{#1}}
\numberwithin{equation}{section}
\theoremstyle{plain}
\newtheorem*{theorem*}{Theorem}
\newtheorem{introtheorem}{Theorem}
\newtheorem{theorem}{Theorem}[section]
\newtheorem{lemma}[theorem]{Lemma}
\newtheorem{corollary}[theorem]{Corollary}
\newtheorem{proposition}[theorem]{Proposition}
\newtheorem{observation}[theorem]{Observation}
\theoremstyle{definition}
\newtheorem{remark}[theorem]{Remark}
\newtheorem{example}[theorem]{Example}
\newcommand{\N}{\mathbb{N}}
\newcommand{\Z}{\mathbb{Z}}
\newcommand{\defeq}{\mathrel{\vcentcolon =}}
\newcommand{\cat}{\mathcal{C}}
\newcommand{\precat}{\mathcal{S}}
\newcommand{\catrels}{\mathcal{R}}
\newcommand{\grpd}{\mathcal{G}}
\newcommand{\thomcat}{\mathcal{F}}
\newcommand{\Tcat}{\mathcal{T}}
\newcommand{\Vcat}{\mathcal{V}}
\newcommand{\BVcat}{\mathcal{BV}}
\newcommand{\BTcat}{\mathcal{BT}}
\newcommand{\BFcat}{\mathcal{BF}}
\newcommand{\rewcat}{\mathcal{R}}
\newcommand{\rewgpcat}{\mathcal{RG}}
\newcommand{\Ob}{\mathrm{Ob}}
\newcommand{\Sym}{\textsc{Sym}}
\newcommand{\Braid}{\textsc{Braid}}
\newcommand{\Homeo}{\operatorname{Homeo}}
\newcommand{\lk}{\operatorname{lk}}
\newcommand{\st}{\operatorname{st}}
\newcommand{\tail}{\operatorname{tail}}
\newcommand{\head}{\operatorname{head}}
\newcommand{\Div}{\operatorname{Div}}
\newcommand{\tDiv}{\widetilde{\operatorname{Div}}}
\newcommand{\match}{\mathcal{M}}
\newcommand{\arcmatch}{\mathcal{MA}}
\newcommand{\env}{\mathcal{G}pd}
\newcommand{\Ore}{\mathcal{O}re}
\newcommand{\eltry}{\mathcal{E}}
\newcommand{\gars}{\mathcal{S}}
\newcommand{\gen}[1]{\left\langle#1\right\rangle}
\newcommand{\abs}[1]{\lvert #1 \rvert}
\newcommand{\floor}[1]{\lfloor #1 \rfloor}
\newcommand{\from}{\leftarrow}
\title[Classifying spaces, Ore categories, Garside families]{Classifying spaces from\\Ore categories with Garside families}
\date{\today}
\subjclass[2010]{Primary 57M07; 
                 Secondary 20F65, 20F36}   
\keywords{Classifying spaces, Ore categories, Garside structures, braid groups, Thompson groups, finiteness properties}
\author[S.~Witzel]{Stefan Witzel}
 \address{Faculty of Mathematics, Bielefeld University, Postfach 100131, 33501 Bielefeld, Germany}
 \email{switzel@math.uni-bielefeld.de}
\tikzstyle{vertex} = [fill, shape = circle, inner sep=0pt,minimum size=2pt]
\tikzstyle{end} = [fill, shape = circle, inner sep=0pt,minimum size=2pt]
\begin{document}

\newcommand{\picangle}{60}
\newcommand{\loopangle}{40}

\begin{abstract}
We describe how an Ore category with a Garside family can be used to construct a classifying space for its fundamental group(s). The construction simultaneously generalizes Brady's classifying space for braid groups and the Stein--Farley complexes used for various relatives of Thompson's groups. It recovers the fact that Garside groups have finite classifying spaces.

We describe the categories and Garside structures underlying certain Thompson groups. The Zappa--Szép product of categories is introduced and used to construct new categories and groups from known ones. As an illustration of our methods we introduce the group Braided $T$ and show that it is of type $F_\infty$.
\end{abstract}

\maketitle

Our main object of study are groups that arise as the fundamental group of an Ore category with a Garside family. The two basic motivating examples are the braid groups $\Braid_n$ and Thompson's group $F$. We provide tools to construct classifying spaces with good finiteness properties for these groups. Our first main result can be formulated as follows (see Section~\ref{sec:ore_cats} for definitions and Section~\ref{sec:finiteness_properties} for the general version).

\begin{introtheorem}
\label{thm:main_complex}
Let $\cat$ be a small right-Ore category that is factor-finite, let $\Delta$ be a right-Garside map, and let $* \in \Ob(\cat)$. There is a contractible simplical complex $X$ on which $G = \pi_1(\cat,*)$ acts. The space is covered by the $G$-translates of compact subcomplexes $K_x, x \in \Ob(\cat)$. Every stabilizer is isomorphic to a finite index subgroup of $\cat^\times(x,x)$ for some $x \in \cat$.
\end{introtheorem}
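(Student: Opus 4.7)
The plan is to build $X$ from the groupoid of right fractions of $\cat$ and then establish contractibility by a Stein--Farley/Brown-style filtration argument. Since $\cat$ is right-Ore, it embeds in an Ore groupoid $\env(\cat)$ with $G = \env(\cat)(*,*) = \pi_1(\cat,*)$. I would first form a poset $\mathcal{P}$ whose elements are equivalence classes $[f]$ of morphisms $f\colon x \to *$ in $\env(\cat)$, where two such morphisms are identified if they differ by precomposition with a unit in $\cat^\times(x,x)$, and where $[f] \le [f']$ iff $f' = f \circ c$ for some morphism $c$ of $\cat$. The group $G$ acts on $\mathcal{P}$ by post-composition on the target $*$, and the right-Ore property ensures $\mathcal{P}$ is directed.

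Next, the Garside map is used to cut down to a space with good finiteness properties. Let $X$ be the geometric realization of the subcomplex of chains in $\mathcal{P}$ whose top and bottom elements differ by a left-divisor of $\Delta$. Equivalently, $X$ is covered by the $G$-translates of the compact subcomplexes $K_x$ corresponding to Garside intervals $[\id_*, \Delta(x)]$ based at a chosen vertex $x \to *$. Factor-finiteness is exactly what makes each interval $[\id, \Delta(x)]$ a finite poset, so each $K_x$ is a compact simplicial complex. For stabilizers, the setwise stabilizer of the vertex $[f\colon x \to *]$ consists of those $g \in G$ with $gf = fu$ for some $u \in \cat^\times(x,x)$, identifying this stabilizer with a conjugate of $\cat^\times(x,x)$; the stabilizer of a higher simplex must additionally preserve a finite ordered chain of divisors, hence sits inside the vertex stabilizer with finite index.

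The main obstacle is contractibility of $X$. The plan is a discrete Morse argument: equip the vertex set with a height function derived from the minimal number of Garside elements needed to express $[f]$, and filter $X$ by sublevel sets. At each added vertex one must show that the descending link is contractible; this is where the Garside hypothesis does the essential work, because divisors of a Garside element form a lattice, so the relevant descending link can be written as the order complex of an interval in that lattice and contracted by coning over a canonical join or meet. Factor-finiteness ensures these descending links are finite so the combinatorial Morse theory applies, while the right-Ore property guarantees that the filtration exhausts $X$. Once each sublevel inclusion is a homotopy equivalence, $X$ is contractible, and the cover by $K_x$ together with the stabilizer description follow directly from the construction.
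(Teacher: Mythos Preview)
Your construction of the poset $\mathcal{P}$, the Stein--Farley subcomplex $X$, the identification of vertex stabilizers with conjugates of $\cat^\times(x,x)$, and the compact pieces $K_x$ all match the paper. The gap is in the contractibility argument.

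You propose a vertex Morse function and claim the descending links are contractible because divisors of $\Delta$ form a lattice. But the descending link of a vertex in $X$ is not an interval in that lattice. With the natural height function (the one coming from objects), the descending link at $\bar b$ with $b \in \Ore(\cat)(*,x)$ is order-isomorphic to the poset $E(x)$ of non-invertible elementary morphisms into $x$ modulo units on the left; for $\cat = \thomcat$ this is the matching complex of the linear graph $L_x$, which has nontrivial homology. Any other reasonable height on fractions runs into the same obstruction: the down-link records \emph{all} elementary predecessors of $\bar b$, not a single principal interval below a fixed element. So vertex-based Morse theory cannot establish contractibility of $X$; it is precisely the tool the paper uses \emph{afterwards}, once contractibility is already known, to extract cocompact sublevel sets that are merely highly connected.

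The paper's contractibility argument is instead a Stein-type interval filtration. First, $\lvert \mathcal{P}\rvert$ is contractible because $\mathcal{P}$ is directed (this is where right-Ore enters). Then one builds $\lvert \mathcal{P}\rvert$ from $X$ by gluing in, in order of increasing $\delta(a^{-1}b)$, the realizations of intervals $[\bar a,\bar b]$ with $a^{-1}b \notin \eltry$. Each such interval is attached along the suspension of the \emph{open} interval $(\bar a,\bar b)$, and it is this open interval that the Garside structure contracts: the $\gars$-head map $\theta\colon \overline{ah}\mapsto \overline{a\,\head(h)}$ is a poset endomorphism of $(\bar a,\bar b)$ with $\bar c \ge \theta(\bar c) \le \theta(\bar b)$, so Quillen's conical-contraction lemma applies and each gluing is along a contractible space. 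Hence $X \hookrightarrow \lvert \mathcal{P}\rvert$ is a homotopy equivalence. Your lattice intuition is exactly the right ingredient, but it acts on open intervals in $\mathcal{P}$, not on descending links in $X$.
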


Taking $\cat$ to be a Garside monoid and $\Delta$ to be the Garside element, one immediately recovers the known fact that Garside groups, and braid groups in particular, have finite classifying spaces \cite{charney04}. In fact, if $\cat$ is taken to be the dual braid monoid, the quotient $G \backslash X$ is precisely Brady's classifying space for $\Braid_n$ \cite{brady01}.

In the case of Thompson's group $F$ the complex in Theorem~\ref{thm:main_complex} is the Stein--Farley complex. The action is not cocompact in this case because $\cat$ has infinitely many objects. In order to obtain cocompact actions on highly connected spaces, we employ Morse theory.

\begin{introtheorem}
\label{thm:main_fn}
Let $\cat$, $\Delta$, $*$ be as in Theorem~\ref{thm:main_complex} and let $\rho \colon \Ob(\cat) \to \N$ be a height function such that $\{x \in \Ob(\cat) \mid \rho(x) \le n\}$ is finite for every $n \in \N$. Assume that
\begin{enumerate}[align=left,leftmargin=*,widest=(\textsc{stab})]
\item[(\textsc{stab})] $\cat^\times(x,x)$ is of type $F_n$ for all $x$,
\item[\mlabel{item:link_connectivity}{\textsc{lk}}] $\abs{E(x)}$ is $(n-1)$-connected for all $x$ with $\rho(x)$ beyond a fixed bound.
\end{enumerate}
Then $\pi_1(\cat,*)$ is of type $F_n$.
\end{introtheorem}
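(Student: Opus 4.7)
The plan is to apply Brown's criterion to the contractible $G$-complex $X$ produced by Theorem~\ref{thm:main_complex}, using a Bestvina--Brady-style Morse function built from $\rho$ to filter $X$ by $G$-cocompact, highly connected subcomplexes.

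First, I promote $\rho$ to a $G$-invariant height $h$ on the vertex set of $X$: each vertex $v$ lies in a $G$-translate of some piece $K_x$, and setting $h(v) = \rho(x)$ gives a well-defined $G$-invariant function once one checks that the type $x$ of a vertex is unambiguous. Let $X_{\le N}$ be the full subcomplex on vertices of height at most $N$. Since $\{x \in \Ob(\cat) \mid \rho(x) \le N\}$ is finite and each $K_x$ is compact, $X_{\le N}$ is a finite union of $G$-translates of compact subcomplexes, hence $G$-cocompact. Cell stabilizers are of type $F_n$ by hypothesis (\textsc{stab}), since a finite index subgroup of a group of type $F_n$ is again of type $F_n$. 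By Brown's criterion it therefore suffices to show that $X_{\le N}$ is $(n-1)$-connected for all sufficiently large $N$.

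For this I carry out the standard descending-link analysis. At each vertex $v$ of height exactly $N$, the star of $v$ in $X_{\le N}$ is the cone on the \emph{descending link} $\lk^{\downarrow}(v)$, spanned by simplices at $v$ whose remaining vertices have strictly smaller height. The heart of the matter, and the main obstacle, is the identification for $v$ of type $x$ of $\lk^{\downarrow}(v)$ with a complex canonically equivalent to $\abs{E(x)}$: simplices containing $v$ correspond to factorisations through $x$, and one must verify that precisely the simplices coming from elementary divisors with head at $x$ are the descending ones, and that their face structure matches that of $E(x)$. Granted this identification, which should be essentially definitional once the construction of $X$ from the Garside data is unpacked, hypothesis (\textsc{lk}) gives $(n-1)$-connectivity of every descending link beyond a fixed height. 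The Bestvina--Brady Morse lemma then forces the inclusion $X_{\le N} \hookrightarrow X$ to be an $n$-equivalence for $N$ large, and since $X$ is contractible by Theorem~\ref{thm:main_complex} this yields the desired $(n-1)$-connectivity of $X_{\le N}$. The remainder is routine bookkeeping, parallel to the treatment of the Stein--Farley complex in Thompson-group theory.
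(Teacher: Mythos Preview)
Your proposal is correct and follows essentially the same route as the paper: promote $\rho$ to a $G$-invariant Morse function on the vertices of $X$, verify that the sublevel sets are $G$-cocompact with stabilizers of type $F_n$, identify each descending link with $\abs{E(x)}$ (the paper does this via the order-reversing bijection $f \mapsto \overline{bf^{-1}}$ from $E(x)$ to $\{\bar{a} \mid a \prec b\}$), and then combine the Morse lemma with Brown's criterion. The only cosmetic difference is that the paper argues cocompactness via transitivity on vertices of a given type together with local finiteness of $\gars$, whereas you invoke the compact pieces $K_x$ directly; both work.
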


The methods for proving Theorem~\ref{thm:main_fn} have been repeatedly used to show that various Thompson groups are of type $F_\infty$ \cite{brown87,stein92,farley03,fluch13,bux16,witzel16a,martinezperez16,belk}. Theorem~\ref{thm:main_fn} could be used as a drop-in replacement in most of the proofs. The complexes $\abs{E(x)}$ depend on $\cat$ and $\Delta$ and are described in Section~\ref{sec:proof_scheme}. In general, verifying condition \eqref{item:link_connectivity} is the key problem in all the $F_\infty$ proofs mentioned.

Theorem~\ref{thm:main_fn} provides a general scheme for proving that an (eligible) group is of type $F_\infty$: first describe the category, second analyze the complexes $\abs{E(x)}$, and then apply the theorem. This scheme will be illustrated in Section~\ref{sec:indirect_product_examples} (describe the category) and Section~\ref{sec:finiteness_properties_examples} (analyze the complexes, apply the theorem) on the examples of Thompson's groups $F$, $T$ and $V$, their braided versions and some other groups. To our knowledge this is the first time that Garside structures are studied in connection with Thompson groups. In the course we define the Thompson group $\mathit{BT}$, \emph{braided $T$}, and prove (see Theorem~\ref{thm:braided_finiteness_properties}):

\begin{introtheorem}
\label{thm:bt}
The braided Thompson group $\mathit{BT}$ is of type $F_\infty$.
\end{introtheorem}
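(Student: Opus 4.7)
The plan is to realize the braided Thompson group $\mathit{BT}$ as the fundamental group $\pi_1(\cat,*)$ of an explicit small, right-Ore, factor-finite category $\cat$ equipped with a right-Garside map $\Delta$, and then to apply Theorem~\ref{thm:main_fn} for every $n$. The category $\cat$ should be built using the Zappa--Szép machinery of Section~\ref{sec:indirect_product_examples}, combining the tree-expansion category underlying Thompson's $T$ with a braid factor; roughly, its objects are cyclically ordered finite sets of strands, and a morphism expands each strand into a subtree while braiding the resulting strands compatibly with the cyclic order.

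First I would write $\cat$, $\Delta$ and a basepoint down explicitly and verify the categorical axioms. Smallness, the right-Ore condition, factor-finiteness, and the existence of a right-Garside map should all transfer from the two known factors (the $T$-category and the braid category) via the general Zappa--Szép results established earlier. Identifying $\pi_1(\cat,*)$ with $\mathit{BT}$ then amounts to matching the standard generators and relations of braided $T$ against the groupoid-of-fractions description of $\pi_1$.

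Next, I would apply Theorem~\ref{thm:main_fn} with the height function $\rho$ recording the number of leaves of an object. Sublevel sets are finite because there are only finitely many cyclically ordered trees with a given number of leaves. The stabilizer condition~(\textsc{stab}) reduces to showing that each $\cat^\times(x,x)$ is of type $F_\infty$; in the braided $T$ setting these groups should be (finite cyclic extensions of) pure braid groups on finitely many strands, and hence $F_\infty$ by classical results.

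The main obstacle is the link-connectivity hypothesis \eqref{item:link_connectivity}: the complexes $\abs{E(x)}$ must become arbitrarily highly connected as $\rho(x)\to\infty$. In the braided $T$ context these $\abs{E(x)}$ are naturally modelled as matching complexes of arcs on a disk with punctures cyclically distributed along the boundary, closely analogous to the complexes appearing for braided $V$ in \cite{bux16}. The strategy is to adapt the Morse-theoretic arguments from that paper: filter $\abs{E(x)}$ by the arcs incident to a distinguished puncture, use a nerve or Quillen-type fibre lemma to reduce to arc-matching complexes on disks with fewer punctures, and induct on the number of punctures to get connectivity growing linearly with $\rho(x)$. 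Once such a bound is in hand, Theorem~\ref{thm:main_fn} delivers type $F_n$ for every $n$, hence type $F_\infty$.
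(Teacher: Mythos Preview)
Your plan is essentially the paper's own proof, and it works. Two points of comparison are worth noting so that you do not do unnecessary work or miss a step.

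First, a small correction in the setup: the category is not built by taking the Zappa--Sz\'ep product of the $T$-category with braids, but rather of the $F$-category $\thomcat$ with the groupoid $\grpd_{\mathit{BT}} = \pi^{-1}(\grpd_T) \subseteq \grpd_{\mathit{BV}}$ of braids whose underlying permutation is cyclic. The resulting $\BTcat = \thomcat \bowtie \grpd_{\mathit{BT}}$ inherits right-Ore, the height function, and the Garside family directly from $\thomcat$ via the general transfer results (Proposition~\ref{prop:zappa-szep_conditions}); the stabilizers $\BTcat^\times(n,n) = \grpd_{\mathit{BT}}(n,n)$ are exactly as you say, finite-index overgroups of pure braid groups, hence of type $F_\infty$.

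Second, for \eqref{item:link_connectivity} you do not need to redo the Morse-theoretic analysis: the connectivity of the relevant arc-matching complex $\arcmatch(C_n)$ on a punctured disk with the \emph{cyclic} adjacency graph is already established in \cite[Section~3.4]{bux16}. What the paper adds, and what your sketch glosses over, is that $E_{\BTcat}(n)$ is not literally $\arcmatch(C_n)$: there is a surjection $E_{\BTcat}(n) \to \arcmatch(C_n)$ whose fibre over a $k$-simplex is a join of countably infinite discrete sets (coming from the pure-braid ambiguity in each merged pair of strands). One then invokes Quillen's fibre theorem, together with the fact that links of simplices in arc-matching complexes are again arc-matching complexes on fewer punctures, to pull the connectivity bound back to $E_{\BTcat}(n)$. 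With that in hand, Corollary~\ref{cor:generic_proof} finishes the proof.
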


Although braided versions of $V$ \cite{dehornoy06,brin07} and $F$ \cite{brady08} exist in the literature, our main merit is to be able to define braided $T$. The fact that it is $F_\infty$ then follows from Theorem~\ref{thm:main_fn} and results from \cite{bux16}. The reason that $\mathit{BT}$ was not defined before is that the natural categorical approach was artificially and painfully avoided in the past, see Remark~\ref{rem:bt}.

A helpful tool in describing the needed categories is the Zappa--Szép $\thomcat \bowtie \grpd$ product of two categories $\thomcat,\grpd$. We call it the \emph{indirect product} and introduce it in Section~\ref{sec:indirect_product}.

The article is organized as follows. The basic notions are introduced in Section~\ref{sec:ore_cats}. The underlying structures for braid groups and Thompson's group $F$ are described in Section~\ref{sec:f}. Section~\ref{sec:finiteness_properties} contains the main construction and the proofs of Theorems~\ref{thm:main_complex} and~\ref{thm:main_fn}. The indirect product of categories is introduced in Section~\ref{sec:indirect_product} and is used in Section~\ref{sec:indirect_product_examples} to construct the categories underlying Thompson's groups and their braided versions. In Section~\ref{sec:finiteness_properties_examples} Theorem~\ref{thm:main_fn} is applied to the examples from Section~\ref{sec:indirect_product_examples} to deduce finiteness properties, among them Theorem~\ref{thm:bt}. Since the results about finiteness properties and the indirect product may be of independent interest, we include the following leitfaden.

\begin{center}
\tikzstyle{box}=[draw, rectangle, rounded corners]
\begin{tikzpicture}[scale=1.5]
\draw node [box] at (0,0) (basic) {Section 1};
\draw node [box] at (-2,-1) (basicex) {Section 2};
\draw node [box] at (0,-1) (indirect) {Section 4};
\draw node [box] at (2,-1) (fin) {Section 3};
\draw node [box] at (-1,-2) (indirectex) {Section 5};
\draw node [box] at (1,-2.5) (finex) {Section 6};
\draw[->] (basic) -- (basicex);
\draw[->] (basic) -- (indirect);
\draw[->]  (indirect) -- (indirectex);
\draw[->]  (basicex) -- (indirectex);
\draw[->]  (basic) -- (fin);
\draw[->]  (fin) -- (finex);
\draw[->]  (indirectex) -- (finex);
\end{tikzpicture}
\end{center}

This article arose out of the introduction to the author's habilitation thesis \cite{witzel16b} which contains further examples not covered here.

\section{Categories generalizing monoids}
\label{sec:ore_cats}

We start by collecting basic notions of categories regarding them as generalizations of monoids. Our exposition is based on \cite[Chapter~II]{dehornoy15} where the perspective is similar. The main difference is notational, see Remark~\ref{rem:dehornoy_compatibility} below.

A monoid may be regarded as (the set of morphisms) of a category with a single object. For us categories will play the role of generalized monoids where the multiplication is only partially defined. In particular, all categories in this chapter will be small. The requirement that they be locally small is important and taking them to be small is convenient, for example, it allows us to talk about morphisms of categories as maps of sets.

Let $\cat$ be a category. Notationally, we follow \cite{dehornoy15} in denoting the set of morphisms of $\cat$ by $\cat$ as well (thinking of them as elements), while the objects are denoted $\Ob(\cat)$. The identity at $x$ will be denoted $1_x$. If $f$ is a morphism from $y$ to $x$ we call $y$ the \emph{source} and $x$ the \emph{target} of $f$. Our notation for composition is is the familiar one for functions, that is, if $f$ is a morphism from $y$ to $x$ and $g$ is a morphism from $z$ to $y$ then $fg$ exists and is a morphism from $z$ to $x$. If $x, y \in \Ob(\cat)$ then the set of morphisms from $y$ to $x$ is denoted $\cat(x,y)$, the set of morphisms from $y$ to any object is denoted $\cat(-,y)$ and the set of morphisms from any object to $x$ is denoted $\cat(x,-)$. This may be slightly unusual but renders the following intuitive expression valid:
\[
f \in \cat(x,y), g \in \cat(y,z) \Rightarrow fg \in \cat(x,z)\text{.}
\]
The corresponding diagram is
\begin{center}
\begin{tikzpicture}[scale=1.2]
\node (x) at (0,0) {$x$};
\node (y) at (1,0) {$y$};
\node (z) at (2,0) {$z.$};
\path[<-]
(x) edge node[above]{$f$} (y)
(y) edge node[above]{$g$} (z);
\path[<-]
(x) edge[out=-30,in=-150] node[below]{$fg$} (z);
\end{tikzpicture}
\end{center}
When we write an expression involving a product of morphisms, the requirement that this product exists is usually an implicit condition of the expression. Thus $fg = h$ means that the source of $f$ is the target of $g$ and that the equality holds.

\begin{remark}
\label{rem:dehornoy_compatibility}
The net effect of the various differences in notation is that our formalism is consistent with \cite{dehornoy15}, only the meaning of source/target, from/to, and the direction of arrows are switched. The reason for this decision is that some of our morphisms will be group elements which we want to act from the left.
\end{remark}

\subsection{Groupoids}

A morphism $f \in \cat(x,y)$ is \emph{invertible} if there is an \emph{inverse}, namely a morphism $g \in \cat(y,x)$ such that $fg = 1_x$ and $gf = 1_y$.  A \emph{groupoid} is a category in which every morphism is invertible. Just as every monoid naturally maps to a group, every category naturally maps to a groupoid, see \cite[Section~3.1]{dehornoy15}:

\begin{theorem}
For every category $\cat$ there is a groupoid $\env(\cat)$ and a morphism $\iota \colon \cat \to \env(\cat)$ with the following universal property: if $\varphi \colon \cat \to \grpd$ is a morphism to a groupoid then there is a unique morphism $\hat{\varphi} \colon \env(\cat) \to \grpd$ such that $\varphi = \hat{\varphi} \circ \iota$.

The groupoid $\env(\cat)$ and the morphism $\iota$ are determined by $\cat$ uniquely up to unique isomorphism.
\end{theorem}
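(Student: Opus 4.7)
The plan is to prove existence by an explicit construction (since uniqueness up to unique isomorphism is a formal consequence of any universal property of this shape: given two candidates, the universal property applied in both directions produces mutually inverse comparison morphisms, and the uniqueness clause forces the compositions to be identities). So the work is to build $\env(\cat)$ and $\iota$ and verify the universal property.

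For the construction I would mimic the usual construction of the group completion of a monoid, adapted to the many-object setting. Take $\Ob(\env(\cat)) \defeq \Ob(\cat)$. For each morphism $f \in \cat(x,y)$ introduce a formal symbol $f^{+1}$ from $y$ to $x$ and a formal inverse $f^{-1}$ from $x$ to $y$. A \emph{zigzag} from $y$ to $x$ is a finite sequence $f_1^{\epsilon_1} \cdots f_n^{\epsilon_n}$ of such symbols with $\epsilon_i \in \{\pm 1\}$ whose sources and targets concatenate correctly, so the whole word goes from $y$ to $x$. Composition is concatenation. Then I would define $\env(\cat)(x,y)$ as the set of zigzags from $y$ to $x$ modulo the smallest equivalence relation compatible with concatenation and generated by: (a) $1_x^{+1}$ is equivalent to the empty word at $x$; (b) $f^{+1}g^{+1} \sim (fg)^{+1}$ whenever $fg$ is defined in $\cat$; (c) $f^{+1}f^{-1}$ and $f^{-1}f^{+1}$ are equivalent to the respective empty words. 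The map $\iota$ sends $f$ to the class of $f^{+1}$. The relations force $\env(\cat)$ to be a category, the relations in (c) force every generator to be invertible (with $(f^{+1})^{-1} = f^{-1}$), and hence every zigzag is invertible, so $\env(\cat)$ is a groupoid; and (a), (b) together say that $\iota$ is a functor.

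For the universal property, given $\varphi \colon \cat \to \grpd$, I would define $\hat\varphi$ on objects by $\hat\varphi(x) = \varphi(x)$ and on a representative zigzag by $\hat\varphi(f_1^{\epsilon_1} \cdots f_n^{\epsilon_n}) = \varphi(f_1)^{\epsilon_1} \cdots \varphi(f_n)^{\epsilon_n}$, where the right-hand side makes sense because $\grpd$ is a groupoid. Well-definedness on equivalence classes reduces to checking that each of the defining relations (a)--(c) is respected, and each is immediate because $\varphi$ is a functor and $\grpd$ is a groupoid. Functoriality of $\hat\varphi$ is then automatic from the definition by concatenation, and $\hat\varphi \circ \iota = \varphi$ holds on the nose. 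Uniqueness of $\hat\varphi$ follows because any morphism of groupoids extending $\varphi$ must send $f^{+1}$ to $\varphi(f)$ and $f^{-1}$ to $\varphi(f)^{-1}$, which determines the value on every zigzag.

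The only genuinely finicky step is the well-definedness of composition on equivalence classes and the check that the equivalence relation in (a)--(c) is compatible with concatenation; this is the usual rewriting bookkeeping and the main technical, if routine, obstacle. An alternative that sidesteps this is to define $\env(\cat)$ abstractly by the presentation: objects $\Ob(\cat)$, one generating arrow per morphism of $\cat$, relations $(fg) = (f)(g)$ and $(1_x) = 1_x$, taken in the category of groupoids; the universal property of such a presentation and the zigzag picture then coincide, but the word-based model above is convenient because it makes the form of a generic element of $\env(\cat)$ explicit, which is used implicitly when one later says that $G = \pi_1(\cat,*) = \env(\cat)(*,*)$.
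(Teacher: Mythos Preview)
Your construction is correct and is the standard zigzag (free groupoid modulo relations) approach to the enveloping groupoid; the bookkeeping you flag as ``finicky'' is routine. There is nothing to compare against here: the paper does not prove this theorem but simply quotes it, with a reference to \cite[Section~3.1]{dehornoy15} for a proof. Your argument is precisely the kind of explicit construction one finds in that reference, so you are in line with the intended approach.
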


We call $\env(\cat)$ the \emph{enveloping groupoid} of $\cat$. The morphism 
$\iota$ is a bijection on objects but it is not typically injective (on 
morphisms). One way to think about the enveloping groupoid is as the 
fundamental groupoid of $\cat$:

The \emph{nerve} of $\cat$ is the simplicial set whose $k$-simplices are diagrams
\begin{center}
\begin{tikzpicture}[scale=1.2]
\node (z) at (0,0) {$x_0$};
\node (y) at (1,0) {$x_1$};
\node (x) at (2,0) {$x_2$};
\node (w) at (3,0) {$x_{k-1}$};
\node (v) at (4,0) {$x_k$};
\path[<-]
(z) edge node[above]{$f_1$} (y)
(y) edge node[above]{$f_2$} (x)
(w) edge node[above]{$f_k$} (v);
\path[dotted]
(x) edge (w);
\end{tikzpicture}
\end{center}
in $\cat$. The $i$th face is obtained by deleting $x_i$ and replacing $f_i, f_{i+1}$ by $f_i f_{i+1}$ and the $j$th degenerate coface is obtained by introducing $1_{x_j}$ between $f_j$ and $f_{j+1}$.

\begin{proposition}[{\cite[Proposition~1]{quillen73}}]
The groupoid $\env(\cat)$ is canonically isomorphic to the fundamental groupoid of the realization of the nerve of $\cat$.
\end{proposition}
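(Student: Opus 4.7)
The plan is to verify that $\Pi \defeq \pi_1(\lvert N\cat\rvert)$, together with a natural functor $\iota' \colon \cat \to \Pi$, satisfies the universal property of $\env(\cat)$ stated in the preceding theorem; the uniqueness clause of that theorem then produces a canonical isomorphism $\env(\cat) \cong \Pi$.

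First I would construct $\iota'$. On objects, $\Ob(\cat)$ is literally the set of $0$-simplices of $N\cat$, hence the vertex set of $\lvert N\cat\rvert$. A morphism $f \in \cat(x,y)$ is a $1$-simplex of $N\cat$ whose realization is a path from $x$ to $y$, so I send it to the corresponding homotopy class in $\Pi$. Functoriality is encoded in the simplicial structure: the degenerate $1$-simplex $1_x$ realizes to the constant path, giving $\iota'(1_x) = 1_x$, and a $2$-simplex with edges $f$, $g$, $fg$ supplies a homotopy between the composite path $f \ast g$ and the single edge $fg$, giving $\iota'(fg) = \iota'(f)\iota'(g)$.

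Next I would verify the universal property: for any groupoid $\grpd$ and any functor $\varphi \colon \cat \to \grpd$, there should be a unique groupoid functor $\hat\varphi \colon \Pi \to \grpd$ with $\hat\varphi \circ \iota' = \varphi$. Uniqueness is essentially formal: by cellular approximation every path in $\lvert N\cat\rvert$ is homotopic rel endpoints to an edge path in the $1$-skeleton, and such an edge path is a word in the generators $\iota'(f)$ and their formal inverses, so $\hat\varphi$ is forced on all of $\Pi$. For existence I would invoke the classical description of the fundamental groupoid of a $2$-dimensional CW complex: it is presented by the $1$-cells subject to the relations read off from attaching maps of $2$-cells. In $\lvert N\cat\rvert$ the $2$-cells correspond exactly to composable pairs $(f,g)$ in $\cat$ (and degenerate analogues), and the relations they impose are precisely $fg = f \ast g$ and $1_x = \mathrm{const}_x$, i.e.\ the composition relations of $\cat$. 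Since $\varphi$ is a functor it respects these relations, so $f \mapsto \varphi(f)$ descends to a well-defined groupoid morphism $\hat\varphi$.

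The main obstacle is making this presentation step rigorous without a lot of overhead: one needs a clean statement that homotopy classes of paths in a CW $2$-complex are determined by words in the $1$-cells modulo $2$-cell boundary relations, and one must identify this presentation bijectively with the standard presentation of $\env(\cat)$ as the free groupoid on the morphisms of $\cat$ modulo its composition and identity relations. Once that identification is in place, applying the universal property of $\env(\cat)$ to $\iota'$ yields a canonical functor $\env(\cat) \to \Pi$, and the two resulting functors are mutually inverse because they agree on the generators $\iota(f) \leftrightarrow \iota'(f)$ and are uniquely determined there by their universal properties.
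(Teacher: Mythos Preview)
The paper does not supply its own proof of this proposition; it simply records the statement with a citation to Quillen \cite[Proposition~1]{quillen73} and moves on. So there is nothing in the paper to compare against.

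Your plan is the standard argument and is essentially correct. One small point worth tightening: with the paper's conventions a morphism $f \in \cat(x,y)$ has source $y$ and target $x$, so the corresponding edge in $\lvert N\cat\rvert$ runs from $y$ to $x$ rather than from $x$ to $y$; this only affects bookkeeping, not the substance. The only genuinely nontrivial step is the one you flag yourself, namely the presentation of the fundamental groupoid of a CW complex by its $2$-skeleton (generators the $1$-cells, relations the boundaries of $2$-cells). Once that is granted, the identification with the free groupoid on $\cat$ modulo the composition and identity relations is exactly the presentation of $\env(\cat)$, and the universal property finishes the argument as you describe.
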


In particular, the fundamental group of $\cat$ in an object $x$ is just the set of endomorphisms of $\env(\cat)$ in $x$: $\pi_1(\cat,x) = \env(\cat)(x,x)$.

\subsection{Noetherianity conditions}

If $fg = h$ then we say that $f$ is a \emph{left-factor} of $h$ and that $h$ is a \emph{right-multiple} of $f$. It is a \emph{proper left-factor} respectively \emph{proper right-multiple} if $g$ is not invertible. We say that $f$ is a \emph{(proper) factor} of $h$ if $efg = h$ (and one of $e$ and $g$ is not invertible).

The category $\cat$ is \emph{Noetherian} if there is no infinite sequence $f_0, f_1, \ldots$ such that $f_{i+1}$ is a proper factor of $f_i$. It is said to be \emph{strongly Noetherian} if there exists a map $\delta \colon \cat \to \N$ that satisfies $\delta(fg) \ge \delta(f) + \delta(g)$ and for $f \in \cat$ non-invertible $\delta(f) \ge 1$. Clearly, a strongly Noetherian category is Noetherian. See \cite[Sections~II.2.3, II.2.4]{dehornoy15} for a detailed discussion.

We call a \emph{height function} a map $\rho \colon \Ob(\cat) \to \N$ such that $\rho(x) = \rho(y)$ if $\cat(x,y)$ contains an invertible morphism and $\rho(x) < \rho(y)$ if $\cat(x,y)$ contains a non-invertible morphism. Note that the existence of a height function implies strong Noetherianity by taking $\delta(f) = \rho(y) - \rho(x)$ if $f \in \cat(x,y)$.

We say that $\cat$ is \emph{factor-finite} if every morphism in $\cat$ has only finitely many factors up pre- and post-composition by invertibles. This condition implies strong Noetherianity (cf.\ \cite[Proposition~2.48]{dehornoy15}).

\subsection{Ore categories}
\label{sec:ore_property}

Two elements $g,h \in \cat(x,-)$ have a \emph{common right-multiple} $d$ if there 
exist elements $e,f \in \cat$ with $ge = hf = d$. It is a \emph{least common 
right-multiple} if every other common right-multiple is a right-multiple of $d$. We 
say that $\cat$ \emph{has common right-multiples} if any two elements with same 
target have a common right-multiple. We say that it \emph{has conditional least 
common right-multiples} if any two elements that have a common right-multiple 
have a least common right-multiple. We say that it \emph{has least common right-
multiples} if any two elements with same target have a least common right-
multiple. We say that $\cat$ is \emph{left-cancellative} if $ef = eh$ implies 
$f=h$ for all $e,f,g \in \cat$. All of these notions have obvious analogues with 
left and right interchanged. A category is \emph{cancellative} if it is 
left-cancellative and right-cancellative.

\begin{lemma}
\label{lem:cancellative_inverse}
If $\cat$ is cancellative and $f \in \cat$ has a left-inverse or right-inverse then it is invertible.
\end{lemma}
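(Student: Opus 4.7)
The plan is a direct cancellation argument, mirroring the familiar monoid proof: a one-sided inverse in a cancellative setting is automatically two-sided. I will only do the left-inverse case, since the right-inverse case is symmetric (using left-cancellativity in place of right-cancellativity).

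Suppose $f \in \cat(x,y)$ admits a left-inverse, that is, some $g$ with $gf = 1_y$. Tracking sources and targets under the convention that $f$ goes from $y$ to $x$, the product $gf$ exists only if the source of $g$ equals the target of $f$, namely $x$, and the target of $g$ must equal the target of $1_y$, namely $y$. Hence $g \in \cat(y,x)$. First I would observe that the product $fg \in \cat(x,x)$ therefore makes sense, and that by associativity
\[
(fg) f \;=\; f(gf) \;=\; f \cdot 1_y \;=\; f \;=\; 1_x \cdot f.
\]
Right-cancelling $f$ from both sides then yields $fg = 1_x$, so $g$ is a genuine two-sided inverse of $f$ and $f$ is invertible.

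The right-inverse case is handled the same way: if $fg = 1_x$ with $g \in \cat(y,x)$, then $f(gf) = (fg)f = 1_x \cdot f = f = f \cdot 1_y$, and left-cancelling $f$ yields $gf = 1_y$.

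There is no real obstacle here; the only thing that needs care is bookkeeping of sources and targets, which is forced on us once we fix the convention on $\cat(x,y)$. The substance of the argument is the single identity $f(gf) = (fg)f$ combined with cancellativity.
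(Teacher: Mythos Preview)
Your proof is correct and matches the paper's own argument: form $fgf = f$ and cancel $f$ on the right to conclude $fg = 1_x$, with the other case symmetric. The only difference is that you spell out the source/target bookkeeping and the second case more explicitly than the paper does.
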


\begin{proof}
Let $f \in \cat(x,y)$ and assume that there is an $e \in \cat(y,x)$ that is a 
left-inverse for $f$, that is, $ef = 1_y$. Then $fef = f$ and canceling $f$ on 
the right shows that $e$ is also a right-inverse. The other case is symmetric.
\end{proof}

\begin{lemma}
\label{lem:gcd_vs_lcm}
Let $\cat$ be strongly Noetherian. Then $\cat$ has least common right-multiples if and only if it has greatest common left-factors.
\end{lemma}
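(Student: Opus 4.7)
Both implications use the same template: extract a maximal (respectively minimal) common left-factor (respectively right-multiple) via strong Noetherianity, then upgrade maximality to greatestness (respectively minimality to leastness) using the hypothesized LCMs (respectively GCDs).

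For $(\Rightarrow)$, assume $\cat$ has LCMs and fix $f,g$ sharing a target $x$. The set $\mathcal{L}$ of common left-factors of $f,g$ contains $1_x$, and writing $f = e f'$ for $e \in \mathcal{L}$ gives $\delta(e) \le \delta(f)$, so $\delta$ is bounded on $\mathcal{L}$. The key claim is that $\mathcal{L}$ is upward directed under the left-factor relation: if $e,e' \in \mathcal{L}$, then both divide $f$ from the left, so $f$ is a common right-multiple of $e$ and $e'$; thus their LCM $e \vee e'$ exists and divides $f$, and the same argument with $g$ shows $e \vee e' \in \mathcal{L}$. Strong Noetherianity (an infinite strict chain in $\mathcal{L}$ would produce strictly increasing $\delta$-values bounded by $\delta(f)$) then yields a maximal $M \in \mathcal{L}$. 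Given any $e \in \mathcal{L}$, directedness produces $M \vee e \in \mathcal{L}$ of which $M$ is a left-factor; maximality forces $M \vee e = M u$ for some invertible $u$, so $e$ is a left-factor of $Mu$ and hence of $M$ (absorb $u^{-1}$ into the right-complement). Thus $M$ is a greatest common left-factor.

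For $(\Leftarrow)$, assume $\cat$ has GCDs and fix $f,g$ sharing target together with at least one common right-multiple; let $\mathcal{M}$ be their set of common right-multiples. Here $\delta$ is bounded below on $\mathcal{M}$ by $0$, and the dual argument shows $\mathcal{M}$ is downward directed: for $d,d' \in \mathcal{M}$, both $f$ and $g$ are common left-factors of $d$ and $d'$, so they are left-factors of the GCD $d \wedge d'$, placing $d \wedge d' \in \mathcal{M}$. A minimal $m \in \mathcal{M}$ therefore exists by Noetherianity, and for any $d \in \mathcal{M}$ one has $m \wedge d \in \mathcal{M}$ a left-factor of $m$; minimality forces $m = (m \wedge d) u$ for $u$ invertible, whence $m \wedge d = m u^{-1}$, and since $m \wedge d$ is a left-factor of $d$ so is $m$. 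Thus $m$ is a least common right-multiple.

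The only delicate issue throughout is the accounting of invertibles: "maximal" in $\mathcal{L}$ and "minimal" in $\mathcal{M}$ only mean that the element is unique in its class up to right-multiplication by an invertible, and the conversion of such a relative statement into the absolute divisibility conclusion is precisely the $u$, $u^{-1}$ manoeuvre in each direction. Noting that common left-factors always exist (namely the identity), the hypothesis "has GCDs" is used only for pairs that already share a common left-factor, so no separate "conditional" version of that hypothesis is required.
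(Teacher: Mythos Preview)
Your argument is the same as the paper's: for $(\Rightarrow)$ you show that the common left-factors of $f,g$ are upward directed under left-divisibility (the LCM of two common left-factors is again one) and then use the $\delta$-bound from strong Noetherianity to produce a maximal, hence greatest, element; the paper phrases this termination as an induction on $\delta$ but the content is identical, and you are somewhat more careful than the paper about the role of invertibles.

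For $(\Leftarrow)$ you run the dual argument, but note that you had to \emph{assume} a common right-multiple of $f,g$ exists in order to have a nonempty $\mathcal{M}$ to work in. The paper's ``analogous'' argument has exactly the same limitation, so what is actually established in both cases is that greatest common left-factors imply \emph{conditional} least common right-multiples. Indeed the free monoid on two generators is strongly Noetherian and has greatest common left-factors (longest common prefixes) but the generators have no common right-multiple at all, so the lemma as literally stated requires this qualification. This is an imprecision in the statement rather than a defect in your proof.
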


\begin{proof}
Suppose that $\cat$ has least common right-multiples and let $f,g \in \cat(x,-)$. Let $s$ and $t$ be common left-factors of $f,g$ and let $r$ be a least common right-multiple of $s$ and $t$. Then, since $f$ and $g$ are common right-multiples of $s$ and $t$, they are right-multiples of $r$, meaning that $r$ is a common left-factor. If $s$ and $t$ are not right-multiples of each other then $\delta(r) > \delta(s), \delta(t)$ and an induction on $\delta(r) \le \delta(f),\delta(g)$ over the common left-factors of $f$ and $g$ produces a greatest common left-factor. The other direction is analogous.
\end{proof}

We say that $\cat$ is right/left-Ore if it is cancellative and has common right/left-multiples.

\begin{theorem}
A category $\cat$ that is right-Ore embeds in a groupoid $\grpd$ such that every element $h \in \grpd$ can be written as $h = fg^{-1}$ with $f, g \in \cat$.
\end{theorem}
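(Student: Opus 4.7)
The plan is to build $\grpd$ explicitly as a category of right-fractions, mimicking the classical Ore localization for monoids. I would set $\Ob(\grpd) \defeq \Ob(\cat)$ and call a \emph{fraction} from $y$ to $x$ a pair $(f, g)$ with $f \in \cat(x, z)$, $g \in \cat(y, z)$ sharing a common source $z$, which is to represent $fg^{-1}$. I would then declare $(f_1, g_1) \sim (f_2, g_2)$ whenever there exist $e_1 \in \cat(z_1, w)$, $e_2 \in \cat(z_2, w)$ with $f_1 e_1 = f_2 e_2$ and $g_1 e_1 = g_2 e_2$, and set $\grpd(x, y) \defeq \{\text{fractions from } y \text{ to } x\}/{\sim}$.

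Reflexivity and symmetry of $\sim$ are immediate. Transitivity is the first use of the Ore property: given witnesses $(e_1, e_2)$ for $(f_1, g_1) \sim (f_2, g_2)$ and $(e_2', e_3)$ for $(f_2, g_2) \sim (f_3, g_3)$, I would apply the common right-multiple property to $e_2$ and $e_2'$ (which share the target $z_2$) to obtain $d, d'$ with $e_2 d = e_2' d'$; then $(e_1 d, e_3 d')$ witnesses $(f_1, g_1) \sim (f_3, g_3)$ by a direct equational chase.

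Composition is defined by the Ore condition in a similar way. For $[f_1, g_1] \in \grpd(y, w)$ and $[f_2, g_2] \in \grpd(x, y)$, the elements $g_2$ and $f_1$ share the target $y$, so Ore produces $e, e'$ with $g_2 e = f_1 e'$, and I would define
\[
[f_2, g_2] \cdot [f_1, g_1] \defeq [f_2 e,\, g_1 e'].
\]
Independence of the choice of $e, e'$ is where left-cancellation enters, via another application of Ore to a common right-multiple of the two choices; independence of the chosen representatives is analogous. Identities are $[1_x, 1_x]$; inverses are given by $[f, g]^{-1} = [g, f]$, directly from the symmetric role of $f, g$ (the composite $[f,g][g,f]$ reduces to $[f,f]$, which is equivalent to $[1_x, 1_x]$); and associativity is a longer but purely formal diagram chase that again reduces to producing a single common right-multiple and then canceling.

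Finally, I would define $\iota \colon \cat \to \grpd$ by $\iota(x) = x$ on objects and $\iota(f) = [f, 1_z]$ for $f \in \cat(x, z)$. Functoriality is a direct check using $e = f$, $e' = 1$ in the composition rule, and injectivity follows from right-cancellation: $[f, 1_z] = [f', 1_z]$ forces $e = e'$, whence $fe = f'e$ and $f = f'$. Every class $[f, g]$ equals $\iota(f) \iota(g)^{-1}$ by construction, giving the claimed fraction form. The main obstacle throughout is bookkeeping rather than conceptual: every well-definedness check amounts to finding a common completion via Ore and then shortening it by cancellation, and one must stay careful about which side of a fraction corresponds to the paper's source-on-the-right convention.
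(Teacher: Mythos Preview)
Your construction is correct and is precisely the classical Ore localization carried over to categories; the bookkeeping you outline (transitivity of $\sim$ via a common right-multiple, well-definedness of composition via Ore plus cancellation, inverses by swapping the pair) is exactly what is needed, and your handling of the paper's source/target convention is consistent.

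There is nothing to compare against in the paper itself: the theorem is stated without proof, as a standard fact drawn from the background reference \cite{dehornoy15}. Your write-up is essentially the argument one would find there (or in any treatment of calculus of fractions), so there is no divergence of approach to discuss.

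One minor remark on your injectivity check: you write ``right-cancellation'' but note that the paper's definition of right-Ore includes two-sided cancellativity, so both are available; in fact your argument as written uses \emph{right}-cancellation ($fe = f'e \Rightarrow f = f'$), which is indeed what is required.
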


The groupoid $\grpd$ in the theorem is called the \emph{Ore localization} 
$\Ore(\cat)$ of $\cat$. Using the universal property, it is not hard to see that 
it coincides with the enveloping groupoid of $\cat$.

The fundamental group of an Ore category has a particularly easy description. In general, an element of $\pi_1(\cat,x)$ is represented by a sequence $f_0g_1^{-1}f_1 \cdots f_{n-1}g_n^{-1}$ with $f_i, g_i \in \cat(x_i,-)$ and $f_j, g_{j+1} \in \cat(-,y_j)$. But if $\cat$ has common right-multiples, then $g_1^{-1}f_1$ can be rewritten as $f_1'{g_1'}^{-1}$ and so the sequence can be shortened to $(f_0f_1')(g_2g_1')^{-1}f_2 \cdots f_{n-1}g_n^{-1}$. Iterating this argument, we find that every element of $\pi_1(\cat,x)$ is of the form $fg^{-1}$ with $f,g \in \cat(x,-)$.

\subsection{Presentations}

We introduce presentations for categories. This is analogous to the situation for monoids and we will be brief. See \cite[Section~II.1.4]{dehornoy15} for details.

A (small) \emph{precategory} $\precat$ consists of a set of objects $\Ob(\precat)$ and a set of morphisms $\precat$. As for categories, each morphism has a \emph{source} and a \emph{target} that are objects and it is a morphism from the source to its target. The set of morphisms from $y$ to $x$ is denoted $\precat(y,x)$. The monoidal aspects of a category are missing in a precategory: it does not have identities or a composition.

Given a precategory $\precat$ there exists a free category $\precat^*$ generated by $\precat$. It has the universal property that if $\phi \colon \precat \to \cat$ is a morphism of precategories and $\cat$ is a category, then $\phi$ uniquely factors through $\precat \to \precat^*$. One can construct $\precat^*$ to have the same objects as $\precat$ and have morphisms finite words in $\precat$ that are composable.

A \emph{relation} is a pair $r = s$ of morphisms in $\precat^*$ with same source and target. If $\phi \colon \precat^* \to \cat$ is a morphism, the relation \emph{holds} in $\cat$ if $\phi(r) = \phi(s)$. A \emph{presentation} consists of a precategory $\precat$ and a family of relations $\catrels$ in $\precat^*$. The category it presents is denoted $\gen{\precat \mid \catrels}$.

It has the universal property that if $\phi \colon \precat \to \cat$ is a morphism of precategories and $\cat$ is a category in which all relations in $\catrels$ hold then $\phi$ uniquely factors through $\precat \to \gen{\precat \mid \catrels}$. One can construct $\gen{\precat \mid \catrels}$ by quotienting $\precat^*$ by the symmetric, transitive closure of the relations.

\subsection{Garside families}

The following notions are at the core of \cite{dehornoy15}. We will sometimes be needing the notions with the reverse order. What in \cite{dehornoy15} referred to as a Garside family in a left-cancellative category, will be called a left-Garside family here to avoid confusion in categories that are left- and right-cancellative.

Let $\cat$ be a left-cancellative category and let $\gars \subseteq \cat$ be a set of morphisms. We denote by $\gars^\sharp$ the set $\cat^\times \cup \gars\cat^\times$ of morphisms that are invertible or left-multiples of invertibles by elements of $\gars$. We say that $\gars^\sharp$ is \emph{closed under (left/right-) factors} if every (left/right-) factor of an element in $\gars^\sharp$ is again in $\gars^\sharp$. An element $s \in \gars$ is an \emph{$\gars$-head} of $f \in \cat$ if $s$ is a left-factor of $f$ and every left-factor of $f$ is a left-factor of $s$ \cite[Definition~IV.1.10]{dehornoy15}. The set $\gars$ is a \emph{left-Garside family} if $\gars^\sharp$ generates $\cat$, is closed under right-factors, and every non-invertible element of $\cat$ admits and $\gars$-head \cite[Proposition~IV.1.24]{dehornoy15}. If $\gars$ is a left-Garside family then $\cat^\times \gars \subseteq \gars^\sharp$, so in fact $\gars^\sharp = \cat^\times \cup \cat^\times \gars\cat^\times$ \cite[Proposition~III.1.39]{dehornoy15}.

All notions readily translate to right-Garside families, except that the head is called an \emph{$\gars$-tail} if $\gars$ is a right-Garside family. Note that $\gars^\sharp$ is defined as $\cat^\times \cup \cat^\times\gars$ when $\gars$ is (regarded as) a right-Garside family.

We will be interested in Garside families that are closed under factors. We describe two situations where this is the case.

Let $\cat$ be left-cancellative and consider a map $\Delta \colon \Ob(\cat) \to \cat$ with $\Delta(x) \in \cat(x,-)$. We write 
\begin{align*}
\Div(\Delta) &= \{g \in \cat\mid \cat(x,-) \ni g\ \exists h \in \cat\ gh = \Delta(x)\}\\
\tDiv(\Delta) &= \{h \in \cat\mid \exists x \exists g \in \cat(x,-)\ gh = \Delta(x)\}
\end{align*}
for the families of left- respectively right-factors of morphisms in the image of $\Delta$. Such a map is a \emph{right-Garside map} if $\Div(\Delta)$ generates $\cat$, if $\tDiv(\Delta) \subseteq \Div(\Delta)$, and if for every $g \in \cat(x,-)$ the elements $g$ and $\Delta(x)$ admit a greatest common left-factor.
If $\Delta$ is a right-Garside map then $\Div(\Delta)$ is a left-Garside family closed under left-factors and thus under factors \cite[Proposition~V.1.20]{dehornoy15}. We note the following for future reference.

\begin{observation}
\label{obs:gars_map_to_family}
Let $\cat$ be a left-cancellative, factor-finite category and let $\Delta$ be a right-Garside map. Then $\gars \defeq \Div(\Delta)$ is a left-Garside family closed under factors and $\gars(x,-)$ is finite for every $x \in \Ob(\cat)$.
\end{observation}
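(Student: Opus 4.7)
The first conclusion---that $\gars \defeq \Div(\Delta)$ is a left-Garside family closed under factors---is exactly the content of \cite[Proposition~V.1.20]{dehornoy15}, which is cited in the sentence immediately preceding the observation. My plan is simply to quote it; no further argument is needed for this half.

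The content of the observation is therefore the finiteness of $\gars(x,-)$. I would exhibit $\gars(x,-)$ as a subset of the factors of the single morphism $\Delta(x)$ and then read off finiteness from the factor-finiteness hypothesis. Concretely, if $g \in \gars(x,-)$ then $gh = \Delta(x)$ for some $h$, and the trivial outer decomposition $1_x \cdot g \cdot h = \Delta(x)$ exhibits $g$ as a factor of $\Delta(x)$ in the sense of Section~\ref{sec:ore_cats}. Factor-finiteness then supplies finitely many representatives of factors of $\Delta(x)$ modulo the equivalence $f \sim ufv$ for invertibles $u,v$.

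From here I would argue that each $\sim$-class contributes only finitely many elements to $\gars(x,-)$. If $g, g' \in \gars(x,-)$ are $\sim$-equivalent via $g' = ugv$, then matching targets forces $u \in \cat^\times(x,x)$, while $v$ is an invertible whose target is the source of $g$; the issue therefore reduces to finiteness of the relevant invertible hom-sets. In the motivating examples (Garside monoids, Stein--Farley categories for Thompson's $F$) these are trivial and there is nothing to check, and in general I would try to extract the needed bound from factor-finiteness applied once more to $1_x$ or to $g$ itself.

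The step I anticipate as the main sticking point is precisely this passage from ``finitely many $\sim$-classes'' to ``finite on the nose''. Everything else is straightforward bookkeeping against the definitions in Section~\ref{sec:ore_cats}, but isolating whether factor-finiteness alone forces the ambient invertible groups to be finite---or whether this is being silently incorporated into the definition---is where I would expect to spend most of the effort.
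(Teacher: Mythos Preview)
Your approach is the intended one: the first clause is a direct citation of \cite[Proposition~V.1.20]{dehornoy15}, and for the second you exhibit every element of $\gars(x,-)$ as a (left-)factor of the single morphism $\Delta(x)$ and invoke factor-finiteness. The paper states this as an Observation without proof, so there is nothing further to compare against.

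Your instinct about the ``sticking point'' is correct, and in fact it cannot be resolved from the stated hypotheses. Factor-finiteness does \emph{not} force the invertible automorphism groups to be finite: take the commutative monoid $M = \Z \times \N$ with componentwise addition. It is cancellative, factor-finite (the factors of $(a,m)$ fall into $m+1$ classes indexed by the second coordinate), and $\Delta = (0,1)$ is a right-Garside map; yet $\Div(\Delta) = \{(a,0),(a,1) : a \in \Z\}$ is infinite. So the literal finiteness of $\gars(x,-)$ does not follow.

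What \emph{does} follow immediately is that $\gars(x,-)$ is finite up to pre- and post-composition by invertibles---and this is exactly the notion of \emph{locally finite} introduced in Section~\ref{sec:space_ore_garside}, which is all the paper ever uses. (In the paper's actual applications the Garside family always lives in $\thomcat$, where invertibles are trivial, so literal finiteness happens to hold there.) You should therefore read the Observation as asserting local finiteness rather than finiteness on the nose; with that reading your argument is complete and no further work on the invertibles is needed.
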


Let $\cat$ be right-Ore. A right-Garside family is \emph{strong} if for $s,t \in \gars^\sharp$ there exist $s',t' \in \gars^\sharp$ such that $st' = ts'$ is a least common right-multiple of $s$ and $t$ \cite[Definition~2.29]{dehornoy15}. If $\gars$ is a strong right-Garside family then $\gars^\sharp$ is also closed under left-factors and thus is closed under factors \cite[Proposition~1.35]{dehornoy15}.


\section{Fundamental examples}
\label{sec:f}
\subsection[Thompson's group $F$ and the category $\thomcat$]{Thompson's group $\mathbf{F}$ and the category $\mathbf{\thomcat}$}

Our description of Thompson's groups is not the standard one, which can be found in \cite{cannon96}. An element of Thompson's group $F$ is given by a pair $(T_+,T_-)$ of finite rooted binary trees with the same number of leaves, say $n$. If we add a caret to the $i$th leaf ($1 \le i \le n$) of $T_+$, that is we make it into an inner vertex with to leaves below it, we obtain a tree $T_+'$ on $n+1$ vertices. If we also add a caret to the $i$th leaf of $T_-$ we obtain another tree $T_-'$. We want to regard $(T_+',T_-')$ as equivalent to $(T_+,T_-)$ so we take the reflexive, symmetric, transitive closure of the operation just described and write the equivalence class by $[T_+,T_-]$. Thompson's group $F$ is the set of equivalence classes $[T_+,T_-]$.

In order to define the product of two elements $[T_+,T_-]$ and $[S_+,S_-]$ we note that we can add carets to both tree pairs to get representatives $[T_+',T'] = [T_+,T_-]$ and $[T',T_-'] = [S_+,S_-]$ where the second tree of the first element and the first tree of the second element are the same. Therefore multiplication is completely defined by declaring that $[T_+',T'] \cdot [T',T_-'] = [T_+',T_-']$. It is easy to see that $[T,T]$ is the neutral element for any tree $T$ and that $[T_+,T_-]^{-1} = [T_-,T_+]$.

We have defined the group $F$ in such a way that a categorical description imposes itself, cf.\ \cite{belk04}. We define $\thomcat$ to be the category whose objects are positive natural numbers and whose morphisms $m \from n$ are binary forests on $m$ roots with $n$ leaves. Multiplication of a forest $E \in \thomcat(\ell,m)$ and a forest $F \in \thomcat(m,n)$ is defined by identifying the leaves of $E$ with the roots of $F$ and taking $EF$ to be the resulting tree. Pictorially this corresponds to stacking the two forests on top of each other (see Figure~\ref{fig:forest_mult}).

\begin{figure}[t]
\centering
\begin{tikzpicture}[line width=0.8pt, scale=0.44]
  \draw
   (-3,-3) -- (0,0) -- (3,-3)   (1,-3) -- (-1,-1)   (-1,-3) -- (0,-2);
  \filldraw
   (-3,-3) circle (1.5pt)   (0,0) circle (1.5pt)   (3,-3) circle (1.5pt)   (-1,-1) circle (1.5pt)   (-1,-3) circle (1.5pt)   (0,-2) circle (1.5pt)   (1,-3) circle (1.5pt)   (2,0) circle (1.5pt)   (4,0) circle (1.5pt)   (6,0) circle (1.5pt);
  \node at (8,0) {$\dots$};

  \begin{scope}[yshift=-5cm]
   \draw
    (-4,-1) -- (-3,0) -- (-2,-1)   (1,-2) -- (3,0) -- (5,-2)   (3,-2) -- (2,-1);
   \filldraw
    (-4,-1) circle (1.5pt)   (-3,0) circle (1.5pt)   (-2,-1) circle (1.5pt)   (-1,0) circle (1.5pt)   (1,0) circle (1.5pt)   (1,-2) circle (1.5pt)   (3,0) circle (1.5pt)   (5,-2) circle (1.5pt)   (3,-2) circle (1.5pt)   (2,-1) circle (1.5pt)   (5,0) circle (1.5pt);
  \node at (7,0) {$\dots$};   \node at (10,2) {$=$};
  \end{scope}
   
  \begin{scope}[xshift=16cm]
   \draw
    (-3,-3) -- (0,0) -- (3,-3)   (1,-3) -- (-1,-1)   (-1,-3) -- (0,-2)
    (-4,-4) -- (-3,-3) -- (-2,-4)   (1,-5) -- (3,-3) -- (5,-5)   (3,-5) -- (2,-4);
   \filldraw
   (-3,-3) circle (1.5pt)   (0,0) circle (1.5pt)   (3,-3) circle (1.5pt)   (-1,-1) circle (1.5pt)   (-1,-3) circle (1.5pt)   (0,-2) circle (1.5pt)   (1,-3) circle (1.5pt)   (2,0) circle (1.5pt)   (4,0) circle (1.5pt)   (6,0) circle (1.5pt)   (-4,-4) circle (1.5pt)   (-2,-4) circle (1.5pt)   (1,-5) circle (1.5pt)   (5,-5) circle (1.5pt)   (3,-5) circle (1.5pt);
   \node at (8,0) {$\dots$};
  \end{scope}
\end{tikzpicture}
\caption{Multiplication of forests (taken from \cite{witzel}).}
\label{fig:forest_mult}

\end{figure}
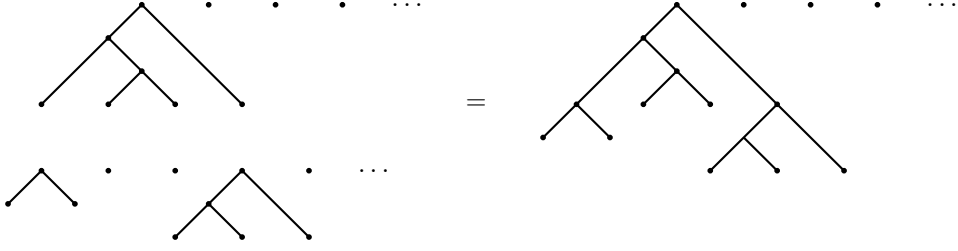
\begin{proposition}
\label{prop:thomcat_ore}
The category $\thomcat$ is strongly Noetherian and right-Ore. In fact, it has least common right-multiples and greatest common left-factors.
\end{proposition}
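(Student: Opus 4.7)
The plan is to verify the four properties in sequence, exploiting the concrete combinatorics of binary forests.

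For strong Noetherianity, I would set $\delta(E) \defeq n-m$ for $E \in \thomcat(m,n)$. This equals the number of carets of $E$ and is additive under composition, and a forest with $\delta(E)=0$ has no carets and so must be an identity, so $\delta(E) \geq 1$ for every non-invertible $E$, as required. (In particular $\thomcat^\times$ consists of identities only.) Cancellativity is equally direct: in $EF$ the forest $E$ appears as the top layer, and $F$ is uniquely recovered as the collection of subtrees hanging below the leaves of $E$, so $EF = EF'$ forces $F=F'$; the right-cancellative case is symmetric.

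For least common right-multiples I would identify each tree $T$ with the complete prefix code $W(T) \subseteq \{0,1\}^*$ formed by its leaf-addresses, so that a forest with $m$ roots becomes an $m$-tuple $(W_1, \ldots, W_m)$ of such codes and right-multiplication corresponds componentwise to refinement of the code. Given two such tuples $(W_i)_i$ and $(W_i')_i$, the candidate least common right-multiple is the forest whose $i$th component is
\[
W_i'' \defeq \{\max(w,w') \mid w \in W_i,\ w' \in W_i',\ w \text{ and } w' \text{ comparable}\},
\]
where $\max$ denotes the longer of two comparable words. Each $W_i''$ is again a complete prefix code: completeness holds because every infinite binary word has a unique prefix in $W_i$ and in $W_i'$, which are necessarily comparable and whose longer member lies in $W_i''$; the antichain property follows by a short chase using that $W_i$ and $W_i'$ are themselves antichains. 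The resulting forest refines both inputs, and any common refinement visibly refines it, so it is the least common right-multiple.

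With cancellativity and common right-multiples in hand, $\thomcat$ is right-Ore by definition, and Lemma~\ref{lem:gcd_vs_lcm}, together with the strong Noetherianity established above, upgrades least common right-multiples to greatest common left-factors. I expect the only real work to be the combinatorial verification in the second paragraph, which is essentially the familiar statement that two dyadic subdivisions of $[0,1]$ admit a unique coarsest common refinement.
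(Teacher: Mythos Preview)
Your argument is correct and essentially parallels the paper's: your prefix-code description of a forest is just another encoding of ``subforest of the leafless infinite binary forest on $m$ roots'', and under that dictionary your $W_i''$ is precisely the \emph{union} of the two forests, which is exactly how the paper constructs the least common right-multiple. The one genuine difference is that the paper obtains greatest common left-factors directly as the \emph{intersection} of the two forests, whereas you detour through Lemma~\ref{lem:gcd_vs_lcm}; both work, but the direct construction is slightly more informative. One small caution: right-cancellativity is not literally ``symmetric'' to left-cancellativity here (the category is not self-dual), so you should spell out, as the paper does, how to recover $E$ from $EF$ and $F$ --- in your prefix-code language, each leaf of $E$ is the longest common prefix of the block of leaves of $EF$ that belong to a single tree of $F$.
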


\begin{proof}
The identity map $\rho \colon \N = \Ob(\thomcat) \to \N$ is a height function on $\thomcat$. Thus $\thomcat$ is strongly Noetherian.

The least common right-multiple of two forests in $\thomcat(m,-)$ is their union (regarding both forests as subforests of the leafless binary forest on $m$ roots). The greatest common left-factor is their intersection. Left cancellativity means that given a forest $f \in \thomcat(m,\ell)$ and a left-factor $a \in \thomcat(m,n)$ the forest in $b \in\thomcat(n,\ell)$ with $f = ab$ is unique. Indeed it is the forest obtained from $f$ by removing $a$ and turning the leaves of $a$ into roots. Right cancellativity means that $a$ is uniquely determined if $f = ab$. To see this, we identify the leaves of $f$ with the leaves of $b$. Now the common predecessor in $f$ of a set of leaves of a tree of $b$ is a leaf of $a$ and every leaf of $a$ arises in that way.
\end{proof}

The proposition together with the remark at the end of Section~\ref{sec:ore_property} shows that every element of $\pi_1(\thomcat,1)$ is represented by $fg^{-1}$ where $f,g \in \thomcat(1,-)$ are binary trees. Cancellativity ensures that $fg^{-1} = {f'}{g'}^{-1}$ if and only if there exist $h$ and $h'$ such that $fh=f'h'$ and $gh=g'h'$. Comparing this description with our definition of $F$ we see:

\begin{proposition}
Thompson's group $F$ is isomorphic to $\pi_1(\thomcat,1)$.
\end{proposition}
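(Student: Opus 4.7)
The plan is to construct an explicit isomorphism $\Phi \colon F \to \pi_1(\thomcat,1)$ defined by $\Phi([T_+, T_-]) = T_+ T_-^{-1}$, where each $T_\pm$ is viewed as a morphism in $\thomcat(1,n)$ for their common leaf count $n$. Surjectivity will come essentially for free: by the Ore normal form derived at the end of Section~\ref{sec:ore_property} and justified here by Proposition~\ref{prop:thomcat_ore}, every element of $\pi_1(\thomcat,1)$ has the shape $fg^{-1}$ with $f, g \in \thomcat(1,-)$, and in order for the composite $fg^{-1}$ to be defined in the Ore localization, $f$ and $g$ must share a target, i.e., they are trees with the same number of leaves.

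For well-definedness I would verify compatibility with the generating move of the equivalence relation defining $F$: passing from $(T_+, T_-)$ to $(T_+ h, T_- h)$, where $h \in \thomcat(n,n+1)$ attaches a single caret at the $i$th leaf and is trivial elsewhere, gives $(T_+ h)(T_- h)^{-1} = T_+ h h^{-1} T_-^{-1} = T_+ T_-^{-1}$. Injectivity will follow from the cancellativity clause noted immediately above the statement: $T_+ T_-^{-1} = S_+ S_-^{-1}$ in the Ore localization is equivalent to the existence of forests $h, h'$ with $T_+ h = S_+ h'$ and $T_- h = S_- h'$, which is precisely the assertion that $(T_+, T_-)$ and $(S_+, S_-)$ admit a common refinement and hence represent the same class in $F$.

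For the homomorphism property I would, given $[T_+, T_-] \cdot [S_+, S_-]$, apply Proposition~\ref{prop:thomcat_ore} to choose forests $h, h'$ with $T_- h = S_+ h'$. Then the $F$-product is $[T_+ h, S_- h']$ with $\Phi$-image $(T_+ h)(S_- h')^{-1}$, while rewriting $T_-^{-1} S_+ = h(h')^{-1}$ in $\Ore(\thomcat)$ turns $T_+ T_-^{-1} \cdot S_+ S_-^{-1}$ into $(T_+ h)(S_- h')^{-1}$, matching. The only piece of bookkeeping I anticipate is verifying that the equivalence relation on $F$, generated by inserting a single caret simultaneously into both trees, coincides with the more general relation $(T_+, T_-) \sim (T_+ h, T_- h)$ for arbitrary $h \in \thomcat(n,-)$; this reduces to the observation that every morphism in $\thomcat$ is an iterated composition of single-caret insertions, which is evident from the structure of $\thomcat$. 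I do not foresee a genuine obstacle, since Proposition~\ref{prop:thomcat_ore} and the Ore normal form have already done the heavy lifting.
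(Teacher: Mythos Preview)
Your proposal is correct and follows essentially the same approach as the paper, just spelled out in more detail. The paper's argument is the brief paragraph immediately preceding the proposition: every element of $\pi_1(\thomcat,1)$ is $fg^{-1}$ with $f,g$ binary trees, cancellativity gives $fg^{-1}=f'g'^{-1}$ iff there exist $h,h'$ with $fh=f'h'$ and $gh=g'h'$, and this matches the definition of $F$; your write-up makes the map, its well-definedness, multiplicativity, and bijectivity explicit, which is exactly the content the paper leaves implicit.
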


Later on it will be convenient to have a presentation for $\thomcat$. The shape of the relations will not come as a surprise to the reader familiar with Thompson's groups. A proof can be found in \cite{witzel16b}.

\begin{proposition}
\label{prop:thomcat_presentation}
The category $\thomcat$ has a presentation with generators morphisms $\lambda_i^n \colon n \from n+1$ for $1 \le i \le n$ subject to the relations
\begin{equation}
\label{eq:caret_relation}
\lambda^n_i\lambda^{n+1}_j = \lambda^n_j\lambda^{n+1}_{i+1} \quad \text{for}\quad 1 \le j < i \le n\text{.}
\end{equation}
Every morphism in $\thomcat(m,n)$ can be written in a unique way as $\lambda_{i_m}^m \cdots \lambda_{i_{n-1}}^{n-1}$ with $(i_j)_j$ non-decreasing.
\end{proposition}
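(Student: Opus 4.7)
The plan is to interpret the generators as elementary forests, verify the relations by inspection, and then treat the normal-form statement as the substantive content. Concretely, I identify $\lambda^n_i \in \thomcat(n, n+1)$ with the forest on $n$ roots carrying a single caret at position $i$ (so root $i$ acquires two leaves and all other roots remain bare). Under the stacking description of composition, the relations \eqref{eq:caret_relation} become visually self-evident: in the two-caret forest with carets at original positions $j$ and $i$ (with $j<i$), one can either add the caret at $i$ first and then, within the resulting $n+1$ roots, at position $j$, or else add the caret at $j$ first and then at the now-shifted position $i+1$. Because the relations hold in $\thomcat$, the universal property of the presentation yields a functor $\Phi \colon \cat' \to \thomcat$ from the presented category $\cat'=\gen{\precat\mid\catrels}$, the identity on objects.

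Next I would show surjectivity of $\Phi$ on morphisms by induction on $n-m$ for $F\in\thomcat(m,n)$. If $n=m$ then $F=1_m$; otherwise some root of $F$ supports a caret, and for the minimal such index $i$ one has $F=\lambda^m_i F'$ with $F'\in\thomcat(m+1,n)$, whereupon the inductive hypothesis applies to $F'$. This decomposition also suggests the canonical normal form: always peel off the caret attached to the leftmost root that still has one.

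For existence of the normal form inside $\cat'$, I would read the relations left-to-right as rewriting rules: an adjacent pair $\lambda^k_i\lambda^{k+1}_j$ with $j<i$ is replaced by $\lambda^k_j\lambda^{k+1}_{i+1}$. Each such step lexicographically decreases the tuple of lower indices of the word (the first altered entry strictly decreases from $i$ to $j$), and since $\lambda^k_\ell$ requires $1\le\ell\le k$, every entry of the tuple is bounded by its position, so the lexicographic order on the fixed-length tuple is a well-order and the process terminates. The terminal words are exactly those with non-decreasing indices, so surjectivity of $\Phi$ together with the rewriting yields the existence half of the claim.

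The main obstacle, as usual, is uniqueness of the normal form, equivalently sufficiency of the relations. I would establish it by constructing an explicit inverse to $\Phi$ on normal-form words. Starting from $F\in\thomcat(m,n)$, iterate the canonical decomposition $F=\lambda^m_iF'$ taking $i$ minimal; this returns a well-defined index sequence $(i_m,i_{m+1},\dots,i_{n-1})$. Minimality at each stage forces non-decreasingness: after the caret at the smallest active root $i$ is detached, the new roots $1,\dots,i-1$ of $F'$ come from roots $1,\dots,i-1$ of $F$, which carried no carets by minimality of $i$, while the newly created roots lie at positions $\ge i$; hence the next index is $\ge i$. This procedure inverts the evaluation of normal-form words, so distinct normal-form words in $\cat'$ map to distinct forests, and $\Phi$ becomes a bijection on morphism sets. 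Consequently the relations \eqref{eq:caret_relation} present $\thomcat$, and the uniqueness of the normal form falls out.
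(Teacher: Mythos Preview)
The paper does not actually prove this proposition: it simply states that ``a proof can be found in \cite{witzel16b}'' and moves on. So there is nothing in the paper to compare your argument against.

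That said, your proof is correct and is essentially the standard one. The key steps all go through: the relations hold in $\thomcat$ by inspection of two-caret forests; the left-to-right rewriting terminates because each step strictly decreases the index tuple in the lexicographic order on a finite set; and the canonical decomposition (peel off the caret at the leftmost nontrivial root) provides a well-defined section $\Psi$ of $\Phi$ on normal-form words. Your check that the resulting sequence is non-decreasing is the crucial point, and your reasoning is sound: after removing the caret at the minimal index $i$, roots $1,\dots,i-1$ of the remainder coincide with the (trivial) roots $1,\dots,i-1$ of the original forest, so the next minimal index cannot drop below $i$. An easy induction on word length then gives $\Psi\circ\Phi=\id$ on normal-form words, which is exactly the injectivity you need.

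One small expository point: when you write ``distinct normal-form words in $\cat'$ map to distinct forests,'' you mean distinct as literal words in $\precat^*$, not as elements of $\cat'$; the conclusion that they are distinct in $\cat'$ as well (i.e.\ that the relations suffice) then follows because $\Phi$ factors through $\cat'$. This is clearly what you intend, but it would be worth making the quantification explicit.
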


\begin{remark}\label{rem:commuting}
The relations \eqref{eq:caret_relation} reflect a commutation phenomenon: for any forest adding a caret to the $i$th leaf and then to the $j$th leaf has the same effect as doing it the other way around. That it does not algebraically look like a commutation relation is due to the fact the the index of the right one of the two leaves has changed when adding the left caret. This is inevitable in the present setup because the $i$th leaf has no identity as a particular vertex in the infinite rooted binary tree but simultaneously represents all $i$th leaves of trees with $n$ leaves. A larger category in which the relations are algebraically commutation relations will appear in Section~\ref{sec:graph_rewriting}.
\end{remark}

Note that since $\thomcat$ is connected, the fundamental groups at different objects are isomorphic. This corresponds to the elementary fact that the tree pair $(T_+,T_-)$ representing an element of $F$ can always be chosen so that $T_+,T_-$ contain an arbitrary fixed subtree.

The most convenient way to exhibit a Garside family in $\thomcat$ is by describing a right-Garside map: for every $n \in \N = \Ob(\thomcat)$ let $\Delta(n)$ be the forest where every tree is a single caret.

\begin{proposition}
\label{prop:f_garside_map}
The map $\Delta \colon \Ob(\thomcat) \to \thomcat$ is a right-Garside map.
\end{proposition}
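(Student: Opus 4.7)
The plan is to verify directly the three defining conditions of a right-Garside map: that $\Div(\Delta)$ generates $\thomcat$, that $\tDiv(\Delta) \subseteq \Div(\Delta)$, and that greatest common left-factors of $g$ and $\Delta(x)$ exist for every $g \in \thomcat(x,-)$.

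The first thing I would do is describe $\Div(\Delta)$ and $\tDiv(\Delta)$ explicitly. Since $\Delta(n)$ has depth one, any factorization $\Delta(n) = gh$ corresponds to an independent choice at each of the $n$ roots: either place the caret in $g$ (and have $h$ be trivial at the two resulting leaves), or keep $g$ trivial at the root (and place the caret in $h$). This shows that $\Div(\Delta)$ and $\tDiv(\Delta)$ coincide, and consist exactly of those forests whose trees are each either trivial or a single caret. In particular $\tDiv(\Delta) \subseteq \Div(\Delta)$ holds on the nose.

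For the generation property, I would invoke Proposition~\ref{prop:thomcat_presentation}: each generator $\lambda_i^n$ is a forest on $n$ roots consisting of a single caret at position $i$ and trivial trees elsewhere, so $\lambda_i^n \in \Div(\Delta)$. Since the $\lambda_i^n$ generate $\thomcat$, so does $\Div(\Delta)$. Existence of greatest common left-factors of $g$ and $\Delta(n)$ is then immediate from Proposition~\ref{prop:thomcat_ore}, which provides greatest common left-factors for any pair of morphisms in $\thomcat$ with common target.

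The only step that requires any real thought is unpacking what left- and right-factors of $\Delta(n)$ look like, but this is a direct picture argument rather than a genuine obstacle; the rest is simply harvesting what has already been established about $\thomcat$ in this section.
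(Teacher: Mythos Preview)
Your approach is the same as the paper's: describe $\Div(\Delta)$ explicitly, verify generation (the paper says ``add one caret at a time'' rather than citing the presentation, but these are the same observation), and invoke Proposition~\ref{prop:thomcat_ore} for greatest common left-factors.

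There is one incorrect assertion, though it does not break the argument. You claim that $\Div(\Delta)$ and $\tDiv(\Delta)$ coincide. They do not. Your description of factorizations $\Delta(n) = gh$ is correct and shows that any right-factor $h$ has each tree trivial or a single caret; but not every such forest arises as a right-factor. For instance, the forest on two roots with a trivial tree at root~$1$ and a caret at root~$2$ lies in $\thomcat(2,3)$, and since $\Delta(n) \in \thomcat(n,2n)$ always has an even number of leaves, this forest cannot be a right-factor of any $\Delta(n)$. The paper makes this precise: elements of $\tDiv(\Delta)$ are exactly those forests with each tree trivial or a caret \emph{and} with an even number of leaves such that the left leaf of every caret has odd index. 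This is a proper subset of $\Div(\Delta)$. Fortunately, all you need is the inclusion $\tDiv(\Delta) \subseteq \Div(\Delta)$, and your argument does establish that, so the conclusion stands.
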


\begin{proof}
The family $\Div(\Delta)$ consists of morphisms where every forest is either a single caret or trivial. Every forest can be built of from these, for example by adding one caret at a time. This shows that $\Div(\Delta)$ generates $\thomcat$. The family $\tDiv(\Delta)$ also consists of morphisms where every forest is either a single caret or trivial with the additional condition that the total number of leaves is even and the left leaf of every caret has an odd index. In particular $\tDiv(\Delta) \subseteq \Div(\Delta)$. If $g \in \thomcat(x,-)$ then $g$ and $\Delta(x)$ have a greatest common left-factor by Proposition~\ref{prop:thomcat_ore}.
\end{proof}

With Observation~\ref{obs:gars_map_to_family} we get:

\begin{corollary}
\label{cor:f_garside_family}
The category $\thomcat$ admits a left-Garside family $\gars$ that is closed under factors such that $\gars(x,-)$ is finite for every $x \in \thomcat$.
\end{corollary}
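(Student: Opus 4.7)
The plan is to recognize this as a direct application of Observation~\ref{obs:gars_map_to_family} to the right-Garside map $\Delta$ produced in Proposition~\ref{prop:f_garside_map}. The hypotheses of that observation are that $\thomcat$ is left-cancellative and factor-finite. Left-cancellativity is already established in Proposition~\ref{prop:thomcat_ore}, so the only thing to verify is factor-finiteness.

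To check factor-finiteness, I would first observe that the invertible elements of $\thomcat$ are precisely the identities: if $f \in \thomcat(m,n)$ has an inverse, then $n-m=0$ and $f$ has no carets, hence $f = 1_m$. Consequently, counting factors up to pre- and post-composition by invertibles is the same as counting factors outright. Now fix $f \in \thomcat(m,n)$; it has exactly $n-m$ carets, i.e.\ a finite total number of inner vertices. A factorization $f = egh$ amounts to choosing two horizontal ``cuts'' through the forest $f$ (one separating $e$ from $gh$, the other separating $eg$ from $h$), and the middle piece $g$ is determined by the two cuts. The cuts correspond to antichains in the finite forest underlying $f$, so there are only finitely many such decompositions and hence only finitely many factors $g$. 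Thus $\thomcat$ is factor-finite.

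With left-cancellativity and factor-finiteness in hand, Observation~\ref{obs:gars_map_to_family} applied to $\Delta$ yields that $\gars \defeq \Div(\Delta)$ is a left-Garside family closed under factors with $\gars(x,-)$ finite for every object $x$, which is the claim. There is no real obstacle: the only mildly non-trivial point is the finiteness of factors, and even this is immediate from the fact that forests are finite combinatorial objects. All substantive work has been done in Proposition~\ref{prop:f_garside_map} and Observation~\ref{obs:gars_map_to_family}, so this corollary is essentially a bookkeeping step.
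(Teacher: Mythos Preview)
Your proposal is correct and follows exactly the paper's approach: the paper simply states ``With Observation~\ref{obs:gars_map_to_family} we get'' the corollary, applying that observation to the right-Garside map from Proposition~\ref{prop:f_garside_map}. You are slightly more explicit than the paper in that you actually verify factor-finiteness (which the paper leaves implicit), and your argument for it is fine.
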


\begin{remark}
The family $\Div(\Delta)$ is in fact a right- as well as a left-Garside family. It is strong as a right-Garside family but not as a left-Garside family.
\end{remark}

If instead of rooted binary trees one takes rooted $n$-ary trees ($n \ge 2$) in the description above, one obtains the category $\thomcat_n$. Everything is analogous to $\thomcat$ but the new aspect that occurs for $n > 2$ is that the category is no longer connected: the number of leaves of an $n$-ary tree with $r$ roots will necessarily be congruent to $r$ modulo $n-1$, hence there is no morphism in $\thomcat_n$ connecting objects that are not congruent modulo $n-1$. As a consequence, the point at which the fundamental group is taken does matter and we obtain $n-1$ different groups for each category. It turns out, however, that the fundamental groups are in fact isomorphic independently of the basepoint and so one only defines
\[
F_{n} = \pi_1(\thomcat_n,1)\text{.}
\]
The groups $F_n$ are the smallest examples of the \emph{Higman--Thompson groups} introduced by Higman~\cite{higman74}. As we will see later, the fundamental groups of the different components are non-isomorphic in the categories for the larger Higman--Thompson groups.

\subsection{Braid groups}
\label{sec:garside}

The \emph{braid group} on $n$ strands, introduced by Artin~\cite{artin25}, is the group given by the presentation
\begin{align}
\Braid_n =
\left\langle \sigma_1,\ldots,\sigma_{n-1} \begin{array}{c|cl} 
		&\sigma_i\sigma_j = \sigma_j\sigma_i, &\abs{i-j} \ge 2,\\
&\sigma_i\sigma_{i+1}\sigma_i = \sigma_{i+1}\sigma_i\sigma_{i+1}, &1 \le i \le n-2
	\end{array}\right\rangle\text{.}\label{eq:braid}
\end{align}
Its elements, called \emph{braids}, can be conveniently depicted as braid diagrams as in Figure~\ref{fig:braid_diagram}, illustrating a physical interpretation as braids on $n$ strands. The first relations are \emph{commutation relations}, the second are \emph{braid relations}. The group $\Braid_n$ arise as the fundamental group of the configuration space of $n$ unordered points in the disc and as the mapping class group of the $n$-punctured disc, see \cite{birman74,kassel08} for more details.

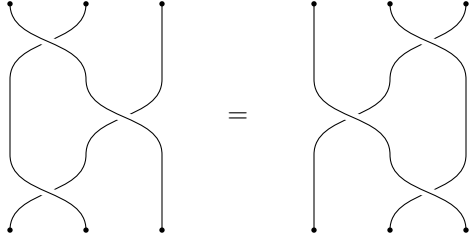
\begin{figure}[htb]
\centering
\begin{tikzpicture}[scale=1,yscale=-1]
\begin{scope}
\draw
   (2,0) -- (2,1) to [out=90, in=-90, looseness=1] (1,2)  to [out=90, in=-90, looseness=1] (0,3);
\draw[white, line width=4pt]
   (1,0) to [out=90, in=-90, looseness=1] (0,1) -- (0,2) to [out=90, in=-90, looseness=1] (1,3);
\draw
   (1,0) to [out=90, in=-90, looseness=1] (0,1) -- (0,2) to [out=90, in=-90, looseness=1] (1,3);
\draw[white, line width=4pt]
   (0,0) to [out=90, in=-90, looseness=1] (1,1) to [out=90, in=-90, looseness=1] (2,2) -- (2,3);
\draw
   (0,0) to [out=90, in=-90, looseness=1] (1,1) to [out=90, in=-90, looseness=1] (2,2) -- (2,3);
\foreach \x in {0,1,2}
\node[end] at (\x,0) {};
\foreach \x in {0,1,2}
\node[end] at (\x,3) {};
\end{scope}
\node at (3,1.5) {$=$};
\begin{scope}[shift={(4,0)}]
\draw
   (2,0) to [out=90, in=-90, looseness=1] (1,1)  to [out=90, in=-90, looseness=1] (0,2) -- (0,3);
\draw[white, line width=4pt]
   (1,0) to [out=90, in=-90, looseness=1] (2,1) -- (2,2) to [out=90, in=-90, looseness=1] (1,3);
\draw
   (1,0) to [out=90, in=-90, looseness=1] (2,1) -- (2,2) to [out=90, in=-90, looseness=1] (1,3);
\draw[white, line width=4pt]
   (0,0) -- (0,1) to [out=90, in=-90, looseness=1] (1,2) to [out=90, in=-90, looseness=1] (2,3);
\draw
   (0,0) -- (0,1) to [out=90, in=-90, looseness=1] (1,2) to [out=90, in=-90, looseness=1] (2,3);
\foreach \x in {0,1,2}
\node[end] at (\x,0) {};
\foreach \x in {0,1,2}
\node[end] at (\x,3) {};
\end{scope}
\end{tikzpicture}
\caption{Diagrams illustrating the braid relation.}
\label{fig:braid_diagram}
\end{figure}

What is known as Garside Theory today arose out of Garside's study of braid groups \cite{garside69}. In this classical case, the category $\cat$ has a single object and thus is a monoid. Specifically, a \emph{Garside monoid} is a monoid $M$ with an element $\Delta \in M$, called a \emph{Garside element}, such that
\begin{enumerate}
\item $M$ is cancellative and has least common right- and left-multiples and greatest common right- and left-factors,
\item the left- and right-factors of $\Delta$ coincide, they are finite in number, and generate $M$,
\item there is a map $\delta \colon M \to \N$ such that $\delta(fg) \ge \delta(f) + \delta(g)$ and $\delta(g) > 0$ if $g \ne 1$.\label{item:garside_length}
\end{enumerate}
A \emph{Garside group} is the group of fractions of a Garside monoid. Among the main features of Garside groups is that they have solvable word problem and conjugacy problem.

Note that a Garside monoid, regarded as a category with one object is, by definition, left- and right-Ore and strongly Noetherian. Moreover the family of factors of $\Delta$ is a left- and right-Garside family.

To see that braid groups are in fact Garside groups consider the \emph{braid monoid} $\Braid_n^+$. It is obtained by interpreting the presentation \eqref{eq:braid} as a monoid presentation. It is a non-trivial consequence of Garside's work that the obvious map $\Braid_n^+ \to \Braid_n$ is injective so that the braid monoid can be regarded as a subset of the braid groups. Its elements are called \emph{positive braids} and are characterized by the property that left strands always overcrosses the right strand. The element $\Delta$ in $\Braid_n^+$ is the braid that performs a full half twist and is characterized by the fact that every strand crosses every other strand precisely once, see Figure~\ref{fig:delta}. Its (left- or right-) factors are the braids where every strand crosses every other strand at most once. The function $\delta$ is simply number of crossings, which is the same as length as a word in the generators. Now $\Braid_n^+$ is a Garside monoid with Garside element $\Delta$, see \cite[Section~I.1.2, Proposition~IX.1.29]{dehornoy15}. Its group of fractions is $\Braid_n$ which is therefore a Garside group.

\begin{figure}
\reflectbox{\includegraphics{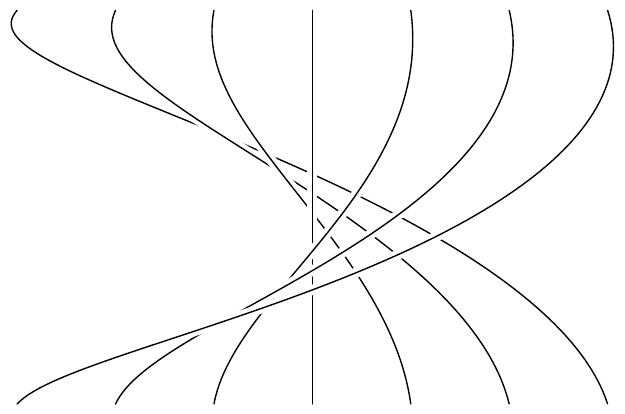}}
\caption{The element $\Delta$ in $\Braid_7^+$.}
\label{fig:delta}
\end{figure}

It was noted by Birman--Ko--Lee \cite{birman98} that there is in fact another monoid $\Braid_n^{*+}$, called the \emph{dual braid monoid}, that also admits a Garside element $\Delta^*$ and has $\Braid_n$ as its group of fractions, see also \cite[Section~I.1.3]{dehornoy15}. This monoid is in many ways better behaved than $\Braid_n^+$. Brady \cite{brady01} used the dual braid monoid to construct a finite classifying space for the braid group.

Note that adding the relations $\sigma_i^2$ to the presentation \eqref{eq:braid} results in a presentation for the symmetric group $\Sym_n$. In particular, there is a surjective homomorphism $\pi \colon \Braid_n \to \Sym_n$ that takes $\sigma_i$ to the transposition $s_i \defeq (i\ i+1)$.

The symmetric group is a finite Coxeter group and the braid group is its corresponding \emph{Artin group}. For every Coxeter system $(W,S)$ there exists an Artin group $A_W$ obtained analogously and a morphism $\pi \colon A_W \to W$. Whenever $W$ is finite, the Artin group $A_W$ again contains a Garside monoid and a dual Garside monoid, see \cite{brieskorn72,bessis03}.


\section{Finiteness properties of fundamental groups of Ore categories}
\label{sec:finiteness_properties}

A classifying space for a group $G$ is a CW complex $B$ whose fundamental group is $G$ and whose universal cover $X = \tilde{B}$ is contractible. Since $G$ acts freely on $X$ with quotient $B = G \backslash X$, one can equivalently say that a classifying space is the quotient of a contractible CW complex by a free $G$-action. Our goal in this section is to construct ``good'' classifying spaces for fundamental groups of Ore categories. The best classifying spaces are compact ones; they have finitely many cells so we also refer to them as \emph{finite}. If $G$ admits a finite clasifying space, we say that it is of type $F$. If a finite classifying space does not exist we aim at classifying spaces with weaker finiteness properties. We start by constructing an action on a contractible space.


\subsection{Contractible spaces from Ore categories with Garside families}
\label{sec:space_ore_garside}

Let $\cat$ be a category that is right-Ore and strongly Noetherian. Let $\gars$ be a left- or right-Garside family such that $\gars^\sharp$ is closed under factors. Let $* \in \Ob(\cat)$ be a base object. Our goal is to construct a contractible space $X$ on which $\pi_1(\cat,*)$ acts with good finiteness properties of the stabilizers as well as the quotient. In the whole discussion $\cat$ can be replaced by the component of $*$ in $\cat$ so all assumptions only need to be made for objects and morphisms in that component.

We put $\eltry = \gars^\sharp$ and recall that $\eltry = \cat^\times \cup \cat^\times \gars \cat^\times$. We call the elements of $\eltry$ \emph{elementary}. Let $\delta \colon \cat \to \N$ be a map that witnesses strong Noetherianity. Note that if $f \in \cat(x,y)$ and $g \in \cat^\times(-,x)$ and $h \in \cat^\times(y,-)$ are invertible then
\[
-\delta(g^{-1}) + \delta(f) - \delta(h^{-1}) \ge \delta(gfh) \ge \delta(g) + \delta(f) + \delta (h)
\]
so $\delta(f) =\delta(gfh)$ and $\delta$ is invariant under pre- and postcomposition by invertibles.

We define the set $P = \Ore(\cat)(*,-)/\cat^\times$, that is, elements of $P$ are equivalence classes $\bar{a}$ of elements $a \in \Ore(\cat)(*,-)$ modulo the equivalence relation that $\bar{a} = \bar{a\smash{'}}$ if there exists a $g \in \cat^\times$ with $ag = a'$. We define a relation $\le$ on $P$ by declaring $\bar{a} \le \bar{b}$ if there exists an $f \in \cat$ with $af = b$.

\begin{lemma}
\label{lem:poset_contractible}
The relation $\le$ is a partial order on $P$ in which any two elements have a common upper bound. In particular, the realization $\abs{P}$ is contractible.
\end{lemma}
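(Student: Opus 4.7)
The plan is to check that $\le$ descends to a well-defined partial order on $P$, that any two elements of $P$ admit a common upper bound, and then to invoke the standard fact that the realization of a directed poset is contractible. All three steps rely only on the right-Ore hypothesis and cancellativity; the Garside family and strong Noetherianity are not used for this statement (they will enter later in the Morse-theoretic analysis of $\abs{P}$).

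Checking well-definedness of $\le$ on equivalence classes amounts to observing that if $a' = ag$ and $b' = bh$ with $g, h \in \cat^\times$ and $af = b$ with $f \in \cat$, then $a'(g^{-1}fh) = b'$, and $g^{-1}fh$ lies in $\cat$ because invertibles do. Reflexivity is immediate from $1 \in \cat$, and transitivity follows from composability of $\cat$-morphisms. The only subtle axiom is antisymmetry: if $af = b$ and $bg = a$ with $f, g \in \cat$, then $afg = a$ holds in $\Ore(\cat)$, and cancelling $a$ in the groupoid yields $fg = 1$. Since $\cat$ embeds faithfully into $\Ore(\cat)$ by the Ore theorem, this equation already holds in $\cat$, whence Lemma~\ref{lem:cancellative_inverse} forces $f$ (and $g$) to be invertible in $\cat$, so $\bar{a} = \bar{b}$.

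For the common upper bound I would use the observation at the end of Section~\ref{sec:ore_property}, applied to arbitrary morphisms out of $*$ rather than only endomorphisms, to write $a = fg^{-1}$ and $b = hk^{-1}$ with $f, h \in \cat(*,-)$ and $g, k \in \cat$ having suitable sources and common targets. The right-Ore property then yields $u, v \in \cat$ with $fu = hv$, and setting $c \defeq fu \in \cat(*,-) \subseteq \Ore(\cat)(*,-)$ gives $a \cdot (gu) = fu = c$ and $b \cdot (kv) = hv = c$, so $\bar{c}$ is a common upper bound in $P$. Contractibility of $\abs{P}$ then follows from directedness by the usual argument: a continuous image of any sphere lies in a finite subcomplex whose finitely many vertices admit a common upper bound $\bar{c}$, so the subcomplex sits inside the closed star of $\bar{c}$, which is a simplex and hence contractible, making the original map null-homotopic.

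I expect the antisymmetry step to be the main pressure point, because it is precisely there that the faithfulness of $\cat \hookrightarrow \Ore(\cat)$ must be combined with Lemma~\ref{lem:cancellative_inverse}; the remaining reductions are routine consequences of the Ore axioms and the standard cofinality argument for directed posets.
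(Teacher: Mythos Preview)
Your proof is correct and follows essentially the same route as the paper's. The paper handles antisymmetry by allowing an invertible $g$ in the equation $bh = ag$ (rather than first invoking well-definedness to fix representatives and get $bg = a$ directly), and it phrases the common-upper-bound step as ``for any $a \in \Ore(\cat)$ there is $f \in \cat$ with $af \in \cat$, then take a common right-multiple in $\cat$''; but these are cosmetic variations of exactly your argument, and your added explanation of why directedness forces $\abs{P}$ to be contractible is the standard one the paper leaves implicit.
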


\begin{proof}
Note first that whether $f = a^{-1}b$ lies in $\cat$ is independent of the representatives. Reflexivity and transitivity are clear. If $\bar{a} \le \bar{b} \le \bar{a}$ then there exist $f, h \in \cat$ and $g \in \cat^\times$ such that $af=b$ and $bh=ag$ showing that $fh$ is a unit. In particular, $f$ has a right-inverse and $h$ has a left-inverse so $f$ and $h$ are units by Lemma~\ref{lem:cancellative_inverse}. This shows $\bar{a} = \bar{b}$.

For any $a \in \Ore(\cat)$ there is an $f \in \cat$ such that $af \in \cat$. Since $\cat$ has common right-multiples, it follows that for any two elements $a_1,a_2 \in \Ore(\cat)$ there exist $f_1, f_2 \in \cat$ with $a_1f_1 = a_2f_2$.
\end{proof}

We define a second, more restrictive relation $\preceq$ on $P$ by declaring that $\bar{a} \preceq \bar{b}$ if there exists an $e \in \eltry$ with $ae = b$. Note that this relation will typically not be transitive. However, if $\bar{a} \preceq \bar{b}$ and $\bar{a} \le \bar{c} \le \bar{b}$ then $\bar{a} \preceq \bar{c} \preceq \bar{b}$ because $\eltry$ is closed under factors. The complex $X \subseteq \abs{P}$ consists of those chains in $\abs{P}$ that are chains with respect to $\preceq$. In particular, $P$ is the vertex set of $X$.

\begin{proposition}
\label{prop:stein_farley_contractible}
The complex $X$ is contractible.
\end{proposition}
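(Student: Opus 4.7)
My approach is to write $X$ as a directed union of contractible subcomplexes and conclude by the standard fact that a directed union of contractible subcomplexes (with cofibration inclusions, which is automatic for simplicial subcomplexes) is itself contractible. For each $\bar{b} \in P$, let $X_{\bar{b}}$ denote the full subcomplex of $X$ spanned by the vertices $\bar{a} \in P$ with $\bar{a} \le \bar{b}$. Iterating the common-right-multiples argument used in Lemma~\ref{lem:poset_contractible} shows that any finite set of vertices of $X$ admits a common $\le$-upper bound; consequently every finite subcomplex of $X$ is contained in some $X_{\bar{b}}$, and the $X_{\bar{b}}$ form a directed system under inclusion whose colimit is $X$. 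Thus it suffices to show that each $X_{\bar{b}}$ is contractible.

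For fixed $\bar{b}$ with representative $b \in \cat(*,y)$, I would identify vertices of $X_{\bar{b}}$ with left-divisors of $b$ modulo units via $\bar{a} \mapsto a^{-1}b$. Under this identification the $\preceq$-relation translates to the condition that the quotient morphism lies in $\gars^\sharp$. To contract $X_{\bar{b}}$ onto the vertex $\bar{b}$, I would exploit the Garside family: for every $\bar{a} \ne \bar{b}$ the morphism $a^{-1}b$ is non-invertible and therefore admits an $\gars$-head $s \in \gars$, yielding a canonical ``next elementary step'' $\bar{a} \preceq \overline{as} \le \bar{b}$ which strictly decreases $\delta(\cdot^{-1}b)$. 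Organizing this choice into a simplicial map $r \colon X_{\bar{b}} \to X_{\bar{b}}$ that sends each simplex $\bar{a}_0 \preceq \cdots \preceq \bar{a}_k$ to an enlarged simplex ending one elementary step closer to $\bar{b}$, and then iterating, produces a canonical sequence of simplicial approximations of the identity; by strong Noetherianity, any given simplex is eventually absorbed into the star of $\bar{b}$, and concatenating the intermediate simplicial homotopies gives a contraction.

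The main technical obstacle is verifying simplicial compatibility of the head-based step: one must check that whenever $\bar{a} \preceq \bar{a}'$ are consecutive vertices of a simplex, the images $\overline{as}$ and $\overline{a's'}$ are still $\preceq$-comparable (or coincide). This rests on the universal property of the $\gars$-head together with the closedness of $\gars^\sharp$ under factors: writing $a' = a e$ with $e \in \gars^\sharp$, the head of $(a')^{-1}b = e^{-1}(a^{-1}b)$ is controlled by the ``residue'' of the head of $a^{-1}b$ after passing through $e$, and the maximality clause of the head forces the desired comparability. Properly formalizing this — and handling degenerate cases where consecutive vertices collapse under $r$, so that the simplicial map lands in a lower-dimensional face — is where the Garside hypothesis really does the work, and is the step I would expect to require the most care.
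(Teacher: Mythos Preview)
Your approach is genuinely different from the paper's and, with care, can be made to work in the left-Garside case. The paper does not contract the sublevel complexes $X_{\bar b}$ directly; instead it exploits the already-known contractibility of $\abs{P}$ and shows that $X \hookrightarrow \abs{P}$ is a homotopy equivalence. Concretely, $\abs{P}$ is built from $X$ by gluing in the realizations of intervals $[\bar a,\bar b]$ in order of increasing $\delta(a^{-1}b)$, and each such gluing is along the suspension of $\abs{(\bar a,\bar b)}$. The open interval is then shown to be contractible by a single application of Quillen's poset lemma: the map $\theta(\overline{ah}) = \overline{a\,\head(h)}$ (respectively $\theta(\overline{bh^{-1}}) = \overline{b\,\tail(h)^{-1}}$ in the right-Garside case) satisfies $\bar c \ge \theta(\bar c) \le \theta(\bar b)$, and that is all that is needed. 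So the head/tail appears once, locally, rather than being iterated globally as in your scheme.

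Regarding the step you flag as delicate: it does go through, and the mechanism is slightly cleaner than you suggest. If $\bar a \preceq \bar a'$ with $a' = ae$, $e \in \gars^\sharp$, and $s = \head(a^{-1}b)$, then $e$ is a left-factor of $a^{-1}b$ lying in $\gars^\sharp$, hence by maximality of the head $e$ left-divides $s$, say $s = et$. Thus $r(\bar a) = \overline{as} = \overline{a't}$ with $t \in \gars^\sharp$, which gives $\bar a' \preceq r(\bar a)$, and since $t$ left-divides $\head((a')^{-1}b)$ you also get $r(\bar a) \preceq r(\bar a')$. Because any two vertices of a simplex of $X$ are $\preceq$-comparable (closure under factors), this yields for every simplex $\sigma$ a $\preceq$-chain containing $\sigma \cup r(\sigma)$, so $r$ is simplicial and contiguous to the identity; iteration plus strong Noetherianity then gives the contraction. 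Two cautions: first, your argument as written covers only the left-Garside case, whereas the statement allows $\gars$ to be a right-Garside family, for which the analogous map uses tails and you should check it separately; second, the paper's interval-by-interval argument avoids all of this bookkeeping and treats both cases symmetrically, which is why it is the more economical route.
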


\begin{proof}
Note that $X$ is a subspace of $\abs{P}$ containing all the vertices. One can obtain $\abs{P}$ from $X$ by gluing in (realizations of) intervals $[\bar{a},\bar{b}]$ not yet contained in $X$. To organize the gluing, note the following: if $[\bar{c}, \bar{d}]$ is a proper subinterval of $[\bar{a},\bar{b}]$ with $f = a^{-1}b \in \cat$ and $h = c^{-1}d \in \cat$ then $h$ is a proper factor of $f$. To an interval $[\bar{a},\bar{b}]$ with $f = a^{-1}b$ we assign the height $\hat{\delta}([\bar{a},\bar{b}]) = \delta(f)$. Note that this is well-defined, because any other representative $f'$ will differ from $f$ only by invertibles and $\delta$ is invariant under pre- and postcomposition by invertibles. Note also that proper subintervals have strictly smaller $\hat{\delta}$-value. We can therefore glue in the intervals with increasing value of $\hat{\delta}$ and are sure that when we glue in an interval, any proper subinterval is already glued in.

For any $n \in \N$ let $\abs{P}_{\hat{\delta} < n}$ be the subcomplex of $\abs{P}$ consisting of $X$ and intervals of $\hat{\delta}$-value $< n$. If $X$ was not contractible, there would be a sphere in $X$ that could not be contracted in $X$ but in $\abs{P}$. The contraction would be compactly supported, hence use simplices supported on finitely many simplices. It therefore suffices to show that the inclusion $X \to \abs{P}_{\hat{\delta} < n}$ is a homotopy equivalence for all $n \in \N$.

For $n = 0$ this is clear, so assume $n > 0$. Then
\[
\abs{P}_{\hat{\delta} < n} = \abs{P}_{\hat{\delta} < n-1} \cup \bigcup_{\hat{\delta}([\bar{a},\bar{b}]) = n-1} \abs{[\bar{a},\bar{b}]}\text{.}
\]
The intervals that are glued in meet only in $\abs{P}_{\hat{\delta} < n-1}$ and they are glued in along $\abs{[\bar{a},\bar{b})} \cup \abs{(\bar{a},\bar{b}]}$. This is a suspension of $\abs{(\bar{a},\bar{b})}$ and so it suffices to show that the open interval is contractible.

If $\gars$ is a left-Garside family, every element $h$ of $\cat$, and every left-factor of $f$ in particular, has an $\gars$-head $\head(g)$. We define the map $\theta \colon [\bar{a},\bar{b}] \to [\bar{a},\bar{b}]$ by $\overline{ah} \mapsto \overline{a\head(h)}$. Note that $\theta(\bar{b}) < \bar{b}$ because otherwise $[\bar{a},\bar{b}]$ is already contained in $\abs{P}$. Note also that $\theta(\bar{c}) > \bar{a}$ for $\bar{c} > \bar{a}$ because the head of a non-invertible is not invertible. This shows that $\theta$ restricts to a map $(\bar{a},\bar{b}) \to (\bar{a},\bar{b})$ with $\bar{c} \ge \theta(\bar{c}) \le \theta(\bar{b})$ and we can apply \cite[Section~1.5]{quillen78} to see that $\abs{(\bar{a},\bar{b})}$ is contractible.

If $\gars$ is a right-Garside family, $\theta$ is defined by $\overline{bh^{-1}} \mapsto \overline{b\tail(h)^{-1}}$. For the same resasons as above $\theta$ restricts to a map $(\bar{a},\bar{b}) \to (\bar{a},\bar{b})$ with $\bar{c} \le \theta(\bar{c}) \ge \theta(\bar{a})$ and we can again apply \cite[Section~1.5]{quillen78}.
\end{proof}

There is an obvious action of $\pi_1(\cat,*)$ on $X$ which is given by precomposition: if $g \in \pi_1(\cat,*) = \Ore(\cat)(*,*)$ and $a \in \Ore(\cat)(*,-)$ then $g \bar{a} = \overline{ga}$ and the relations $\le$ and $\preceq$ are clearly preserved under this action.

Next we want to look at stabilizers and weak fundamental domains. These will be particularly well-behaved with an additional assumption. We say that $\gars$ is \emph{(right-)locally finite} if for every object $x \in \Ob(\cat)$ the set $\gars(x,-)$ is finite up to pre- and post-composition by invertibles. Local finiteness of $\gars$ does \emph{not} imply that $X$ is locally finite but does imply:

\begin{observation}
\label{obs:loc_fin}
Assume that $\gars$ is locally finite.
For every $\bar{a} \in P$ there are only finitely many $\bar{b} \in P$ with $\bar{a} \preceq \bar{b}$. In particular, there are only finitely many simplices for which $\bar{a}$ is $\preceq$-minimal.
\end{observation}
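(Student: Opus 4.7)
The plan is to fix a representative $a$ of $\bar a$ with source $x\in\Ob(\cat)$ and exhibit a bijection
\[
\{\bar b\in P\mid\bar a\preceq\bar b\}\;\longleftrightarrow\;\eltry(x,-)/\cat^\times,
\]
where $\cat^\times$ acts on the right by multiplication; finiteness of the right-hand side will then follow from the local finiteness of $\gars$, and the simplex statement will be a quick corollary. The bijection sends $\bar b$ to the class $[e]$ with $ae\in\bar b$: such an $e\in\eltry(x,-)$ exists by the definition of $\preceq$ once we absorb (using the closure of $\eltry=\gars^\sharp$ under composition with invertibles) any unit needed to match our chosen representative of $\bar a$, and uniqueness modulo $\cat^\times$ is left-cancellativity. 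The inverse sends $[e]$ to $\overline{ae}$.

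For finiteness of $\eltry(x,-)/\cat^\times$, decompose $\eltry=\cat^\times\cup\cat^\times\gars\cat^\times$. All elements of $\cat^\times(x,-)$ are right-unit equivalent to $1_x$, giving a single class. Any $usv\in\cat^\times\gars\cat^\times(x,-)$ is right-unit equivalent to $us$ (absorbing $v$ into the right factor), and a direct computation shows $[us]=[u's']$ in the quotient iff $s'=(u'^{-1}u)\,s\,w$ for some invertible $w$, i.e., iff $s$ and $s'$ differ by pre- and post-composition by invertibles in $\gars$. Local finiteness of $\gars$ bounds the number of such equivalence classes (among $\gars$-elements with target in the isomorphism class of $x$); the residual ambiguity in the pre-multiplying unit $u$ contributes only a bounded factor coming from the isotropy group $\cat^\times(x,x)$, yielding the required finiteness.

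For the simplex claim, every simplex of $X$ with $\bar a$ as its $\preceq$-minimum is a chain $\bar a\preceq\bar b_1\preceq\cdots\preceq\bar b_k$ in $P$, and each vertex $\bar b_i$ lies in the finite set just bounded; since a simplex is determined by its vertex set, only finitely many can occur. The most delicate point in the argument is the finiteness analysis in the previous paragraph: one must verify that the residual choice of pre-multiplying invertible contributes only a finite factor, which in the principal examples (where $\cat^\times(x,x)$ is trivial, as for $\thomcat$ or the classical and dual braid monoids) reduces to a routine observation.
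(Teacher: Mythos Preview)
The paper states this as an Observation without proof, so there is nothing to compare against directly; your task is to supply a clean justification. Your bijection
\[
\{\bar b\in P\mid\bar a\preceq\bar b\}\;\longleftrightarrow\;\eltry(x,-)/\cat^\times
\]
is correct and well argued, and the simplex statement follows from it exactly as you say (using that a simplex of $X$ is a set of vertices that are pairwise $\preceq$-comparable, so every vertex of such a simplex satisfies $\bar a\preceq\bar b$).

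The gap is in your finiteness argument for $\eltry(x,-)/\cat^\times$. Working with the decomposition $\eltry=\cat^\times\cup\cat^\times\gars\cat^\times$ you are left with classes $[us]$, and you assert that ``the residual ambiguity in the pre-multiplying unit $u$ contributes only a bounded factor coming from the isotropy group $\cat^\times(x,x)$''. But $\cat^\times(x,x)$ need not be finite: in $\BVcat$ one has $\cat^\times(n,n)=\Braid_n$, which is infinite. Your bound is therefore vacuous in one of the paper's central examples, and the hedge at the end (restricting to examples with trivial $\cat^\times(x,x)$, such as $\thomcat$ or the braid monoids) excludes precisely the braided Thompson groups.

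For left-Garside families --- the case used in all the paper's applications --- there is a clean fix that avoids the pre-multiplying unit altogether. Use the \emph{defining} description $\gars^\sharp=\cat^\times\cup\gars\cat^\times$ rather than the derived one. Then every non-invertible $e\in\eltry(x,-)$ is already of the form $sv$ with $s\in\gars(x,-)$ and $v\in\cat^\times$, so $[e]=[s]$ and
\[
\eltry(x,-)/\cat^\times\;\subseteq\;\{[1_x]\}\cup\gars(x,-)/\cat^\times.
\]
In every instance in the paper $\gars(x,-)$ is literally finite (it arises from a right-Garside map on a factor-finite category via Observation~\ref{obs:gars_map_to_family}, and this persists under the indirect product by Proposition~\ref{prop:zappa-szep_conditions}), so the right-hand side is finite with no reference to $\cat^\times(x,x)$. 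If one insists on the decomposition $\cat^\times\gars\cat^\times$, the same conclusion is reached by noting that $\cat^\times\gars\subseteq\gars^\sharp=\cat^\times\cup\gars\cat^\times$ lets one rewrite any non-invertible $usv$ as $s'v'$ with $s'\in\gars(x,-)$.
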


\begin{lemma}
\label{lem:stabilizers}
Every simplex-stabilizer of the action of $\pi_1(\cat,*)$ on $X$ is isomorphic to a subgroup of $\cat^\times(x,x)$ for some $x \in \Ob(\cat)$. If $\gars$ is locally finite, the subgroup has finite index.
\end{lemma}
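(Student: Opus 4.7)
The plan is to first analyze vertex stabilizers and then reduce the general simplex case to those. Fix a simplex $\sigma$ of $X$ with vertex set $\bar a_0 \prec \cdots \prec \bar a_k$, and let $x_i$ denote the target of $a_i$ (so $a_i \in \Ore(\cat)(*,x_i)$). Any element of $\pi_1(\cat,*)$ fixing $\sigma$ must in particular fix the $\preceq$-minimal vertex $\bar a_0$, so it suffices to identify the vertex stabilizer of $\bar a_0$ with $\cat^\times(x_0,x_0)$ and then to bound the index of the simplex stabilizer inside it.

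For the vertex stabilizer, $g \in \pi_1(\cat,*)$ fixes $\bar a_0$ exactly when $ga_0 = a_0 u$ in $\Ore(\cat)$ for some $u \in \cat^\times(x_0,x_0)$, equivalently $g = a_0 u a_0^{-1}$ computed in the groupoid $\Ore(\cat)$. I will check that the map
\[
\varphi \colon \cat^\times(x_0,x_0) \to \pi_1(\cat,*), \qquad u \mapsto a_0 u a_0^{-1}
\]
is a well-defined group homomorphism whose image is exactly the stabilizer of $\bar a_0$. Injectivity is immediate: if $a_0 u a_0^{-1} = 1$ then $a_0 u = a_0$, and cancellation in the groupoid $\Ore(\cat)$ forces $u = 1_{x_0}$. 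Thus the vertex stabilizer of $\bar a_0$ is isomorphic to $\cat^\times(x_0,x_0)$. Since the simplex stabilizer of $\sigma$ is contained in the vertex stabilizer of $\bar a_0$, it pulls back under $\varphi$ to a subgroup of $\cat^\times(x_0,x_0)$, giving the first assertion.

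For the finite-index statement, note that the action of $\pi_1(\cat,*)$ preserves the relations $\le$ and $\preceq$, hence carries simplices having $\bar a_0$ as their $\preceq$-minimum to simplices having $\overline{ga_0}$ as their $\preceq$-minimum. Consequently the vertex stabilizer of $\bar a_0$ permutes the set
\[
\Sigma \defeq \{\tau \subseteq X \text{ a simplex} \mid \bar a_0 \text{ is the $\preceq$-minimum of } \tau\}.
\]
When $\gars$ is locally finite, Observation~\ref{obs:loc_fin} says $\Sigma$ is finite, so the stabilizer of $\sigma \in \Sigma$ inside the vertex stabilizer of $\bar a_0$ has finite index. Transporting through $\varphi$ gives a finite-index subgroup of $\cat^\times(x_0,x_0)$, as required.

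The only real subtlety to guard against is a change-of-representative issue: replacing $a_0$ by $a_0 v$ with $v \in \cat^\times$ replaces $\varphi$ by a conjugate homomorphism $u \mapsto v u v^{-1}$ composed with $\varphi$, so the identification with a subgroup of $\cat^\times(x,x)$ is only canonical up to the choice of representative (and up to the choice of vertex $\bar a_0$ in $\sigma$, but any choice works since every vertex contains the minimum in its $\preceq$-lower set). This is exactly the ambiguity allowed for by the phrasing ``for some $x \in \Ob(\cat)$.''
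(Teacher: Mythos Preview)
Your proof is correct and follows essentially the same approach as the paper: identify the vertex stabilizer of $\bar a_0$ with $\cat^\times(x_0,x_0)$ via conjugation by $a_0$, observe that the simplex stabilizer sits inside it (since the action preserves $\preceq$ on a chain), and invoke Observation~\ref{obs:loc_fin} for the finite-index claim. The paper's proof is just a terser version of yours.
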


\begin{proof}
Let $\bar{a}$ be a vertex in $X$ with $a \in \Ore(\cat)(*,x)$ and suppose that $g \in \pi_1(\cat,*)$ fixes $\overline{a}$, that is, $\overline{a} = g\overline{a} = \overline{ga}$. Then $a^{-1}ga \in \cat^\times(x,x)$. This shows that the stabilizer of $\bar{a}$ is conjugate to $\cat^\times(x,x)$. If $\gars$ is locally finite then Observation~\ref{obs:loc_fin} implies that the stabilizer of an arbitrary simplex has finite index in a vertex stabilizer.
\end{proof}

\begin{corollary}
\label{cor:free_proper}
If $\cat^\times(x,x) = \{1_x\}$ for every object $x \in \Ob(\cat)$ then the action of $\pi_1(\cat,*)$ on $X$ is free. If $\cat^\times(x,x)$ is finite then the action is proper.
\end{corollary}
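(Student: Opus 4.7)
The plan is to deduce this as a direct corollary of Lemma~\ref{lem:stabilizers}, which already does the real work of identifying stabilizers. Nothing new needs to be proved here beyond unpacking the definitions of \emph{free} and \emph{proper} in terms of simplex-stabilizers.

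First I would handle the freeness claim. By Lemma~\ref{lem:stabilizers}, the stabilizer of any simplex of $X$ is isomorphic to a subgroup of $\cat^\times(x,x)$ for some object $x$. Under the hypothesis $\cat^\times(x,x) = \{1_x\}$ for every $x$, every such subgroup is trivial, so every simplex-stabilizer is trivial. Since a simplicial action with trivial stabilizers on every simplex fixes no point of the realization, the action of $\pi_1(\cat,*)$ on $X$ is free.

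For properness I would argue as follows. Lemma~\ref{lem:stabilizers} gives that each simplex-stabilizer embeds in some $\cat^\times(x,x)$; if the latter is finite for every $x$, then every simplex-stabilizer is finite. For a simplicial action on a simplicial complex, an action with finite simplex-stabilizers is automatically proper (any compact subset meets only finitely many open simplices, and the set of group elements moving a given simplex to another fixed simplex is a coset of the stabilizer, hence finite). Thus the action is proper.

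I do not expect any real obstacle: both statements are immediate once Lemma~\ref{lem:stabilizers} is in hand. The only point worth noting explicitly is that ``proper'' here refers to the action on the CW complex $X$, and properness reduces to finiteness of stabilizers because the $\pi_1(\cat,*)$-action permutes simplices.
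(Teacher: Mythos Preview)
Your proposal is correct and matches the paper's approach: the corollary is stated in the paper without proof, as an immediate consequence of Lemma~\ref{lem:stabilizers}, and your argument is exactly the intended unpacking of that lemma.
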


Now let us pick, for every $x \in \Ob(\cat)$ a morphism $f_x \in \Ore(\cat)(*,x)$ arbitrarily and let $K_x \subseteq X$ be the union of the realizations of the intervals $[\overline{f_x},\overline{f_xe}]$ with $e \in \eltry(x,-)$.

\begin{lemma}
\label{lem:weak_fundamental_domain}
The complex $X$ is covered by the $\pi_1(\cat,*)$-translates of the complexes $K_x, x \in \Ob(\cat)$. If $\gars$ is locally finite then each $K_x$ is compact.
\end{lemma}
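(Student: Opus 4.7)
The plan is to prove the two assertions independently. For the covering, I take an arbitrary simplex $\sigma = \{\bar{a}_0 \preceq \cdots \preceq \bar{a}_k\}$ of $X$, with $a_0 \in \Ore(\cat)(*, x_0)$, and construct a $g \in \pi_1(\cat,*)$ translating $\sigma$ into $K_{x_0}$. The natural candidate is $g = f_{x_0} a_0^{-1}$, which lies in $\Ore(\cat)(*,*) = \pi_1(\cat,*)$ and sends the minimum vertex of $\sigma$ to $\overline{f_{x_0}}$. Writing $\bar{a}_i = \overline{a_0 e_i}$ with $e_i \in \eltry(x_0,-)$ (using $\bar{a}_0 \preceq \bar{a}_i$ and setting $e_0 = 1_{x_0}$), one computes $g \bar{a}_i = \overline{f_{x_0} e_i}$, so the translated simplex lies inside the realization of the interval $[\overline{f_{x_0}}, \overline{f_{x_0} e_k}] \subseteq K_{x_0}$. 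Equivalently, $\sigma \subseteq g^{-1} K_{x_0}$, proving the cover.

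For compactness, assuming local finiteness of $\gars$, I would invoke Observation~\ref{obs:loc_fin}: the set $\{\bar{c} \in P \mid \overline{f_x} \preceq \bar{c}\}$ is finite. The claim is that every vertex of $K_x$ lies in this set. Indeed, such a vertex $\bar{c}$ lies in some interval $[\overline{f_x}, \overline{f_x e}]$ with $e \in \eltry(x,-)$, and the relation $\overline{f_x} \le \bar{c} \le \overline{f_x e}$ means $c = f_x h$ for some left-factor $h$ of $e$. Since $\eltry = \gars^\sharp$ is closed under factors, $h \in \eltry$, whence $\overline{f_x} \preceq \bar{c}$. Thus $K_x$ has finitely many vertices, hence finitely many simplices as a simplicial complex, and is compact.

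The only real subtlety, and the point I would check most carefully, is that the entire order-complex realization of $[\overline{f_x}, \overline{f_x e}]$ sits inside $X$ and not merely inside $\abs{P}$. But this is already built into the setup: whenever $\overline{f_x} \preceq \overline{f_x e}$, every element of the closed $\le$-interval between them is $\preceq$-related to both endpoints, again because $\gars^\sharp$ is closed under factors. Once this observation is in place, both parts are essentially bookkeeping, with Observation~\ref{obs:loc_fin} providing all the finiteness input needed.
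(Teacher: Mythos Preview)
Your proof is correct and follows the same route as the paper's: for the covering you pick the minimum vertex of a simplex, translate it to $\overline{f_{x_0}}$ via $g = f_{x_0} a_0^{-1} \in \pi_1(\cat,*)$, and observe that the whole simplex lands in the interval $[\overline{f_{x_0}},\overline{f_{x_0}e_k}] \subseteq K_{x_0}$; for compactness you spell out what the paper declares ``clear'' by invoking Observation~\ref{obs:loc_fin} and the closure of $\gars^\sharp$ under factors. Your added remark that the realization of $[\overline{f_x},\overline{f_x e}]$ actually lies in $X$ (not merely in $\abs{P}$) is a worthwhile clarification and is exactly the fact recorded just before the definition of $X$.
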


\begin{proof}
If $\sigma = \{f \prec fe_1 \prec \ldots \prec fe_k\}$ is a simplex in $X$ with $f \in \Ore(\cat)(*,x)$ and $e_1, \ldots,e_k \in\eltry(x,-)$ then $f_xf^{-1} \in \pi_1(\cat,*)$ and $f_xf^{-1} K_x$ contains $\sigma$. The second statement is clear.
\end{proof}

The ideal special case is:

\begin{corollary}
If $\cat$ has no non-identity invertible morphisms and has only finitely many objects and if $\gars$ is locally finite then $\pi_1(\cat,*)$ has a finite classifying space.
\end{corollary}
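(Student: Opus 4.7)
The plan is to combine the three technical results just established (Proposition~\ref{prop:stein_farley_contractible}, Corollary~\ref{cor:free_proper}, and Lemma~\ref{lem:weak_fundamental_domain}) and observe that together they are exactly the hypotheses needed to exhibit a compact classifying space as the quotient $B \defeq \pi_1(\cat,*) \backslash X$.

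First, Proposition~\ref{prop:stein_farley_contractible} gives that $X$ is contractible under the standing assumptions on $\cat$ and $\gars$, so we already have a candidate universal cover. Next, the hypothesis that $\cat$ has no non-identity invertible morphisms means $\cat^\times(x,x) = \{1_x\}$ for every $x$, so Corollary~\ref{cor:free_proper} yields that $\pi_1(\cat,*)$ acts freely (and simplicially) on $X$. A free action of a group on a contractible CW complex always produces a classifying space as the quotient, which handles the homotopy-theoretic content: $B$ has fundamental group $\pi_1(\cat,*)$ and contractible universal cover $X$.

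It remains to argue that $B$ is finite as a CW complex. Here I would invoke Lemma~\ref{lem:weak_fundamental_domain}: since $\gars$ is locally finite, each $K_x$ is a compact (hence finite) subcomplex, and $X = \bigcup_{x \in \Ob(\cat)} \pi_1(\cat,*) \cdot K_x$. Passing to the quotient, $B$ is the union of the images of the $K_x$, one for each $x \in \Ob(\cat)$. Because $\Ob(\cat)$ is finite by assumption, $B$ is covered by finitely many compact sets and is therefore compact; equivalently, the simplicial cells of $B$ (which correspond to $\pi_1(\cat,*)$-orbits of simplices in $X$) are all represented by simplices in one of the finitely many $K_x$, of which there are only finitely many in total. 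Thus $B$ is a finite classifying space.

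No real obstacle is expected: the statement is essentially the conjunction of the three preceding results, with ``finitely many objects'' and ``$\gars$ locally finite'' ensuring that the weak fundamental domain from Lemma~\ref{lem:weak_fundamental_domain} becomes an honest compact fundamental domain, while ``no non-identity invertibles'' upgrades the action from proper to free. The only mild point worth checking carefully is that the simplicial structure on $X$ descends to a CW structure on $B$ with the claimed finitely many cells, which follows because the action permutes simplices and is free.
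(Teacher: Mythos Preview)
Your proof is correct and follows essentially the same approach as the paper: invoke contractibility of $X$, freeness of the action via Corollary~\ref{cor:free_proper} (from $\cat^\times(x,x)=\{1_x\}$), and cocompactness via Lemma~\ref{lem:weak_fundamental_domain} (from local finiteness of $\gars$ and finiteness of $\Ob(\cat)$), then conclude that the quotient is a finite classifying space. The paper's proof is the two-sentence version of exactly this argument.
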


\begin{proof}
Under the assumption the action of $\pi_1(\cat,*)$ is free by Lemma~\ref{cor:free_proper} and cocompact by Lemma~\ref{lem:weak_fundamental_domain}. The quotient is then finite a classifying space.
\end{proof}

In particular we recover the main result of \cite{charney04}.

\begin{corollary}
\label{cor:garside_complex}
Every Garside group $G$ has a finite classifying space.
\end{corollary}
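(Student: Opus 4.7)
The plan is to realize a Garside group as $\pi_1(\cat,*)$ for a category $\cat$ satisfying all the hypotheses of the preceding corollary, and then simply invoke that corollary. Let $M$ be a Garside monoid with Garside element $\Delta$ and let $G$ be its group of fractions. Regard $M$ as a small category $\cat$ with a single object $*$, whose morphism set is $M$.

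First I would verify the structural hypotheses. Condition (i) in the definition of a Garside monoid gives that $\cat$ is cancellative and has common right-multiples, so $\cat$ is right-Ore; condition (iii) gives strong Noetherianity via $\delta$. The same length function forces $\cat^\times(*,*) = \{1\}$: if $gh = 1$ then by the supermultiplicativity and strict positivity of $\delta$ we would get $0 = \delta(1) \ge \delta(g) + \delta(h)$, forcing $\delta(g) = \delta(h) = 0$ and hence $g = h = 1$. Thus $\cat$ has only the trivial self-invertible morphism and only one object.

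Next I would produce the Garside family. Define $\Delta \colon \Ob(\cat) \to \cat$ by sending $*$ to $\Delta \in M$ and show that it is a right-Garside map: $\Div(\Delta)$ generates $\cat$ because the left-factors of $\Delta$ generate $M$ by condition (ii); the inclusion $\tDiv(\Delta) \subseteq \Div(\Delta)$ is exactly the coincidence of left- and right-factors of $\Delta$, also from (ii); and the greatest common left-factor of $g$ and $\Delta$ exists for every $g \in M$ by (i). By the property of right-Garside maps cited in Section~\ref{sec:ore_cats} (via \cite[Proposition~V.1.20]{dehornoy15}), $\gars \defeq \Div(\Delta)$ is then a left-Garside family with $\gars^\sharp$ closed under factors. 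Finiteness of $\gars(*,-)$ is built into condition (ii), so $\gars$ is locally finite.

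Finally, since $\cat$ has a single object and its self-morphisms form the monoid $M$, the enveloping groupoid $\env(\cat)$ is the group of fractions of $M$, which is $G$; hence $\pi_1(\cat,*) = G$. All hypotheses of the preceding corollary now hold: no non-identity invertible morphisms, finitely many (namely one) objects, and $\gars$ locally finite. Applying that corollary yields a finite classifying space for $G$. The only step that requires any thought is the verification that $\Delta$ is a right-Garside map in the categorical sense, but each of the three defining conditions translates directly from one of the three axioms of a Garside monoid, so no genuine obstacle arises.
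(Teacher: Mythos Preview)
Your proof is correct and follows exactly the route the paper has in mind: the corollary is stated without proof because the paper has already observed (Section~\ref{sec:garside}) that a Garside monoid viewed as a one-object category is right-Ore and strongly Noetherian with the factors of $\Delta$ forming a Garside family, so the preceding corollary applies directly. You have simply filled in the verifications the paper leaves implicit, including the pleasant observation that condition~(iii) forces the invertibles to be trivial.
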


In the case of the dual braid monoid, the complex we constructed is precisely the \emph{dual Garside complex} constructed by Brady~\cite{brady01}.


\subsection{Finiteness properties}

Topological finiteness properties of a group $G$ were introduced by Wall \cite{wall65,wall66} and are conditions on how finite a classifying space for $G$ can be chosen. A group is said to be \emph{of type $F_n$} if it admits a classifying space $B$ whose $n$-skeleton $B^{(n)}$ has finitely many cells. Equivalently a group is of type $F_n$ if it acts freely on a contractible space $X$ such that the action on $X^{(n)}$ is cocompact. It is clear that type $F_n$ implies type $F_m$ for $m < n$ and one defines the finiteness length $\phi(G)$ to be the supremal $n$ for which $G$ is of type $F_n$. If $\phi(G) = \infty$ then $G$ is said to be of type $F_\infty$.

In low dimensions, these properties have familiar descriptions: a group is of type $F_1$ if and only if it is finitely generated, and it is of type $F_2$ if and only if it is finitely presented.

Given a group $G$, in order to study its finiteness properties, one needs to let $G$ act on a highly connected space $X$. If the action is free, then the low-dimensional skeleta of $G \backslash X$ are those of a classifying space. A useful result is Brown's criterion which says that one does not have to look at free actions, see \cite[Propositions~1.1,~3.1]{brown87}:

\begin{theorem}
\label{thm:browns_criterion}
Let $G$ act cocompactly on an $(n-1)$-connected CW complex $X$. If the stabilizer of every $p$-cell of $X$ is of type $F_{n-p}$ then $G$ is of type $F_n$.
\end{theorem}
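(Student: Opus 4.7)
The plan is to reduce to the case where $G$ acts freely and cocompactly on an $(n-1)$-connected CW complex, from which type $F_n$ follows directly: the quotient has finite $n$-skeleton, and attaching cells of dimension $> n$ to kill higher homotopy produces a classifying space with finite $n$-skeleton. The given action on $X$ is cocompact and $(n-1)$-connected but typically not free; however, the type $F_{n-p}$ assumption on cell stabilizers lets one thicken $X$ by replacing each cell by its product with a suitable free resolution of its stabilizer.

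Concretely, for each $G$-orbit of $p$-cells with $p \le n$, fix a representative $\sigma$ and, using that $G_\sigma$ is of type $F_{n-p}$, choose a free $G_\sigma$-CW complex $Y_\sigma$ that is $(n-p-1)$-connected and has only finitely many $G_\sigma$-orbits of cells in dimensions $\le n-p$; for $p > n$ take any free $G_\sigma$-resolution. Glue the pieces $G \times_{G_\sigma} (\sigma \times Y_\sigma)$ along the face relations inherited from $X$ to obtain a $G$-CW complex $\tilde{X}$ equipped with a $G$-equivariant projection $\pi \colon \tilde{X} \to X$. The resulting $G$-action on $\tilde{X}$ is free, since each $Y_\sigma$ is $G_\sigma$-free, and cocompact through dimension $n$, since $X$ is $G$-cocompact and each $Y_\sigma$ contributes only finitely many $G_\sigma$-orbits in the relevant range.

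The crux is showing that $\tilde{X}$ is $(n-1)$-connected. The cleanest route is the Leray-type spectral sequence of $\pi$: because $Y_\sigma$ is $(n-p-1)$-connected, all nontrivial fiber homology in total degree $\le n-1$ is concentrated in the $0$-th row, which recovers $H_*(X)$ and vanishes through dimension $n-1$ by hypothesis. A van Kampen argument (fibers over $0$- and $1$-cells are connected when $n \ge 2$) gives simple-connectivity of $\tilde{X}$, and Hurewicz then upgrades vanishing homology to vanishing homotopy through degree $n-1$. Passing to $G \backslash \tilde{X}$ and attaching higher cells produces the desired classifying space with finite $n$-skeleton.

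The main obstacle is this connectivity verification, which reproduces the technical heart of \cite[Propositions~1.1,~3.1]{brown87}. Whichever route one takes (spectral sequence, homotopy colimit, or explicit orbit-by-orbit induction along the skeleta of $X$), one must carefully track how the cell structure and face maps of $X$ interact with the chosen resolutions $Y_\sigma$, and handle the $\pi_1$-upgrade separately since Hurewicz alone does not suffice in degree $1$. The bookkeeping is routine but requires care, especially at the dimensional boundary $p = n$ where $Y_\sigma$ collapses to a free $G_\sigma$-set of $0$-cells.
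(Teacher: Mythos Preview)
The paper does not prove this theorem at all: it is stated as a known result with a citation to \cite[Propositions~1.1,~3.1]{brown87}, and no proof is given. So there is nothing to compare against.

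Your sketch is a reasonable outline of the standard argument (replace each cell by the Borel-type thickening $G \times_{G_\sigma}(\sigma \times Y_\sigma)$, check freeness and cocompactness through dimension $n$, then verify $(n-1)$-connectedness via a spectral sequence or skeletal induction). The places you flag as delicate---gluing the resolutions coherently along face maps, handling $\pi_1$ separately, and the boundary case $p=n$---are exactly the points where a full proof requires care, and your proposal correctly identifies them without resolving them. If you want to turn this into a self-contained proof rather than a sketch, you would need to spell out the coherence of the $Y_\sigma$ choices along face inclusions (this is usually done by an inductive construction over skeleta of $X$, not by choosing the $Y_\sigma$ independently), and make the spectral sequence or filtration argument precise. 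As written it is an accurate high-level summary of Brown's proof, which is all one can ask given that the paper itself just cites the result.
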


The full version of Brown's criterion also gives a way to decide that a group is not of type $F_n$. We formulate it here only to explain why we will not be able to apply it:

\begin{theorem}
\label{thm:browns_criterion_negative}
Let $G$ act on an $(n-1)$-connected CW complex $X$ and assume that the stabilizer of every $p$-cell of $X$ is of type $F_{n-p}$. If $G$ is of type $F_n$ then for every cocompact subspace $Y$ and any basepoint $* \in Y$ there exists a cocompact subspace $Z \supseteq Y$ such that the maps $\pi_k(Y,*) \to \pi_k(Z,*)$ induced by inclusion have trivial image for $k \le n-1$.
\end{theorem}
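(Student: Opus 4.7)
The plan is to deduce the statement from the standard Wall/Brown algebraic criterion by the usual trick of tensoring the $G$-action on $X$ with a free resolution.

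First, I would replace $X$ by a nicer $G$-space without changing homotopy type equivariantly. Since $G$ is assumed of type $F_n$, there exists a free $G$-CW complex $W$ which is contractible and whose $n$-skeleton is $G$-cocompact. Consider $\tilde X \defeq X \times W$ with the diagonal $G$-action. The projection $\tilde X \to X$ is a $G$-equivariant homotopy equivalence because $W$ is contractible, and the action on $\tilde X$ is free because the action on $W$ is. Moreover, $\tilde X$ is still $(n-1)$-connected. The product cell structure has cells of the form $\sigma \times \tau$ with stabilizer $G_\sigma \cap G_\tau = G_\sigma \cap \{1\} = \{1\}$, so $\tilde X$ is a free $G$-CW complex which is $(n-1)$-connected.

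Second, I would build a suitable cocompact filtration. Given the cocompact subspace $Y \subseteq X$, pick finitely many $G$-orbits of cells whose union covers $Y$, enumerate the remaining orbits of cells of $X$ in some order, and let $X_i$ be the subcomplex obtained by adjoining the first $i$ such orbits to $Y$. Then $Y = X_0 \subseteq X_1 \subseteq \cdots$ is an exhaustion of $X$ by cocompact $G$-subcomplexes. Pulling back through the projection yields a cocompact exhaustion $\tilde X_i \defeq X_i \times W$ of $\tilde X$ by free $G$-subcomplexes.

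Third, I would invoke the algebraic half of Brown's criterion. Since $G$ is of type $F_n$ and $\tilde X$ is a free, $(n-1)$-connected $G$-CW complex, the quotient $B \defeq \tilde X / G$ has $\pi_k(B) = 0$ for $1 \le k \le n-1$ and $\pi_1(B) = G$ up to dimension $n$ in the sense that the map $B \to BG$ classifying the $G$-cover is $n$-connected. The standard Wall argument (using the cellular $\mathbb Z G$-chain complex of $\tilde X$ and the fact that $G$ admits a partial resolution by finitely generated free $\mathbb Z G$-modules through dimension $n$) implies that the directed system $\{\pi_k(\tilde X_i/G)\}_i$ is \emph{essentially trivial} for $k \le n-1$: for each $i$ there is $j \ge i$ such that $\pi_k(\tilde X_i/G) \to \pi_k(\tilde X_j/G)$ is zero. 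Equivalently, because each $\tilde X_i$ is a connected covering space of $\tilde X_i/G$ pulled back from $\tilde X/G$, the induced maps $\pi_k(\tilde X_i,*) \to \pi_k(\tilde X_j,*)$ have trivial image for all $k \le n-1$ (the image lies in the kernel of the map to $\pi_k$ of the $n$-connected-at-infinity classifying object).

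Finally, I would descend from $\tilde X$ back to $X$. Take $Z = X_j$ for the $j$ produced above; the $G$-homotopy equivalence $\tilde X \to X$ sends $\tilde X_i$ to $X_i$ and is a homotopy equivalence on each level since the projection $X_i \times W \to X_i$ has contractible fiber $W$. So $\pi_k(Y,*) = \pi_k(X_0,*) \to \pi_k(X_j,*) = \pi_k(Z,*)$ has trivial image for $k \le n-1$, which is the desired conclusion. The main obstacle is the algebraic step linking type $F_n$ to essential triviality of the $\pi_k$-filtration: this requires a spectral sequence comparing the cellular $\mathbb Z G$-chain complex of the filtration with a finite-type free resolution of $\mathbb Z$ over $\mathbb Z G$, and it is precisely the hypothesis that $p$-cell stabilizers are of type $F_{n-p}$ that feeds each filtration layer into this spectral sequence with finitely generated contributions.
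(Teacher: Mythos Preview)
The paper does not actually prove this theorem; it merely states it as the ``full version of Brown's criterion'' with a citation to \cite[Propositions~1.1,~3.1]{brown87}, and only records it in order to explain why the criterion cannot be applied in certain later examples. So there is no in-paper argument to compare your sketch against.

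Assessing your sketch on its own merits: the overall strategy---make the action free by crossing with a model for $EG$, then invoke the Wall/Brown algebraic criterion on the resulting free filtration---is the standard one, but there is a real gap in the cocompactness bookkeeping. You assert that $\tilde X_i \defeq X_i \times W$ is a cocompact exhaustion, but it is not. A cell of $(X_i \times W)^{(n)}$ has the form $\sigma \times \tau$; for a fixed $G$-orbit $G\sigma$ in $X_i$ and a fixed $G$-orbit $G\tau$ in $W$, the diagonal $G$-orbits of pairs are parametrized by the double cosets $G_\sigma \backslash G / G_\tau = G_\sigma \backslash G$ (since $G$ acts freely on $W$, so $G_\tau = 1$). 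This set is infinite whenever $G_\sigma$ has infinite index in $G$, which is the typical situation. So neither $X_i \times W$ nor its $n$-skeleton is $G$-cocompact, and the ``algebraic half of Brown's criterion'' cannot be applied to this filtration as stated.

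This is precisely the point at which the stabilizer hypothesis (that each $p$-cell stabilizer is of type $F_{n-p}$) must do work, and in your sketch it appears only as a closing remark about a spectral sequence. In Brown's actual argument one either works directly with the equivariant homology spectral sequence for the filtration of $X$---where the stabilizer hypothesis guarantees that the relevant $E^1$-terms are finitely generated over $\Z G$ in the needed range---or one replaces the single $W$ by models $W_\sigma$ for $EG_\sigma$ with cocompact $(n-\dim\sigma)$-skeleta and assembles them into a Borel-type construction. Either route makes essential, non-formal use of the stabilizer finiteness, and the reduction to a free cocompact filtration is more delicate than simply taking a diagonal product with one global $W$.
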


Theorem~\ref{thm:browns_criterion_negative} can be used to show that a group is not of type $F_n$ if this is visible in the topology of $X$. On the other hand, if the stabilizers have bad finiteness properties we cannot decide whether $G$ has good finiteness properties or not: in that case we are looking at the wrong action.


\subsection{Combinatorial Morse theory}

In order to study connectivity properties of spaces and apply Brown's criterion we will be using combinatorial Morse theory as introduced by Bestvina and Brady \cite{bestvina97}. Here we give the most basic version used in Section~\ref{sec:proof_scheme}.

Let $X$ be the realization of an abstract simplicial complex, regarded as a CW complex. A \emph{Morse function} is a function $\rho \colon X^{(0)} \to \N$ with the property that $\rho(v) \ne \rho(w)$ if $v$ is adjacent to $w$. For $n \in \N$ the sublevel set $X_{\rho < n}$ is defined to be the full subcomplex of $X$ supported on vertices $v$ with $\rho(v) < n$. The \emph{descending link} $\lk^{\downarrow} v$ of a vertex $v$ is the full subcomplex of $\lk v$ of those vertices $w$ with $\rho(w) \le \rho(v)$ and the \emph{descending star} $\st^{\downarrow}$ is defined analogously. That $\rho$ is a Morse function implies that that the inequality $\rho(w) \le \rho(v)$ is strict for the descending link and for the descending star is not strict only when $w = v$. In particular, the descending star is the cone over the descending link.

The goal of combinatorial Morse theory is to compare the connectivity properties of sublevel sets to each other and to those of $X$. The tool to do so is a basic lemma, called the Morse Lemma:

\begin{lemma}
\label{lem:morse}
Let $\rho$ be a Morse function on $X$. Let $m \le n \le \infty$ and assume that for every vertex $v$ with $m \le \rho(v) < n$ the descending link of $v$ is $(d-1)$-connected. Then the pair $(X_{\rho <n}, X_{\rho < m})$ is $d$-connected, that is, $\pi_k(X_{\rho < m} \to X_{\rho < n})$ is an isomorphism for $k < d$ and an epimorphism for $k = d$.
\end{lemma}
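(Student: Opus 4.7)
The plan is to build $X_{\rho<n}$ up from $X_{\rho<m}$ by attaching the descending stars of the vertices with $m \le \rho(v) < n$ level by level, and to argue that each step is a $d$-connected extension because one is attaching contractible cones along $(d-1)$-connected subcomplexes. Enumerating the (at most countably many) values of $\rho$ in $[m,n)$ as $m = \ell_0 < \ell_1 < \cdots$, I consider the filtration
\[
Y_{-1} \defeq X_{\rho<m}, \qquad Y_i \defeq X_{\rho<m} \cup \bigcup_{j \le i}\ \bigcup_{\rho(v) = \ell_j}\st^{\downarrow} v.
\]
Since $\rho$ is a Morse function, the vertices of any simplex of $X$ have pairwise distinct $\rho$-values, so each simplex of $X_{\rho<n}$ has a unique top vertex $v$ and lies in $\st^{\downarrow} v$ (or in $X_{\rho<m}$ when $\rho(v) < m$); hence $X_{\rho<n} = \bigcup_i Y_i$.

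The key step is then to show that each pair $(Y_i, Y_{i-1})$ is $d$-connected. By the Morse condition, any two vertices at level $\ell_i$ are non-adjacent, so the closed descending stars $\st^{\downarrow} v$ for $v$ with $\rho(v)=\ell_i$ pairwise meet, and meet $Y_{i-1}$, only in simplices of strictly smaller $\rho$-value, all of which already lie in $Y_{i-1}$ (either in $X_{\rho<m}$ or in some $\st^{\downarrow} v'$ attached earlier). This exhibits $Y_i$ as the CW pushout of $Y_{i-1}$ along the inclusion $\bigsqcup_{\rho(v)=\ell_i}\lk^{\downarrow} v \hookrightarrow \bigsqcup_{\rho(v)=\ell_i}\st^{\downarrow} v$. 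Since each $\st^{\downarrow} v = v \ast \lk^{\downarrow} v$ is contractible and each $\lk^{\downarrow} v$ is $(d-1)$-connected by hypothesis, the long exact sequence of the pair gives $\pi_k(\st^{\downarrow} v, \lk^{\downarrow} v) = 0$ for $k \le d$; because the cones are glued on independently of one another, the pair $(Y_i, Y_{i-1})$ is itself $d$-connected.

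To conclude, induction on $i$ using the long exact sequence of the triple $(X_{\rho<m}, Y_{i-1}, Y_i)$ gives that $(Y_i, X_{\rho<m})$ is $d$-connected for every $i$. A compactness argument (any map from $S^k$ or $D^{k+1}$ into the CW complex $X_{\rho<n} = \bigcup_i Y_i$ factors through some $Y_i$) transfers this to the colimit and yields the asserted $d$-connectivity of $(X_{\rho<n}, X_{\rho<m})$. The hard part is the pushout/excision step of the middle paragraph: identifying the intersection $Y_{i-1} \cap \st^{\downarrow} v$ with exactly $\lk^{\downarrow} v$ uses the Morse condition both for $v$ and for its fellow vertices at level $\ell_i$, and only after this identification does the connectivity hypothesis on descending links feed directly into the long exact sequence of the pair.
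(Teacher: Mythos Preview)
Your argument is correct and follows essentially the same route as the paper's: filter by $\rho$-level, attach the descending stars along the descending links, use that $(\st^\downarrow v,\lk^\downarrow v)$ is $d$-connected, and pass to the colimit by compactness. The paper is a touch more explicit at the one step you phrase as ``the cones are glued on independently'': it justifies the passage from $\pi_k(\st^\downarrow v,\lk^\downarrow v)=0$ to $\pi_k(Y_i,Y_{i-1})=0$ via excision in homology together with the relative Hurewicz theorem (handling $\pi_1$ separately), since naive homotopy excision is not available without extra hypotheses.
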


\begin{proof}
The basic observations are that
\[
X_{\rho < m+1} = X_{\rho < m} \cup \bigcup_{\rho(v) = m} \st^{\downarrow} v\text{,}
\]
that $\st^\downarrow v \cap \st^\downarrow v' \subseteq X_{\rho< m}$ for $\rho(v) = m = \rho(v')$, and that $\st^\downarrow v \cap X_{\rho < m} = \lk^\downarrow v$. As a consequence (using compactness of spheres) it suffices to study the extension $Y \defeq X_{\rho < m} \cup_{\lk^\downarrow v} \st^\downarrow v$ for an individual vertex $v$ with $\rho(v) = m$.

In this situation $\pi_k(Y, X_{\rho<m}) \cong \pi_k(\st^\downarrow v, \lk^\downarrow v)$ for $k \le d$. This can be seen by separately looking at $\pi_1$ and $H_*$ (where excision holds) and applying Hurrewicz's theorem \cite[Theorem~4.37]{hatcher02}. The statement now follows from the long exact homotopy/homomlogy sequence for the pair $(\st^\downarrow v, \lk^\downarrow v)$.
\end{proof}


\subsection{Finiteness properties of fundamental groups of Ore categories}
\label{sec:proof_scheme}

We take up the construction from Section~\ref{sec:space_ore_garside}. So $\cat$ is again a right-Ore category, $\gars$ is a left- or right-Garside family closed under factors, and $* \in \Ob(\cat)$ is a base object. More than requiring strong Noetherianity, we now need a height function $\rho \colon \Ob(\cat) \to \N$.

We use these data and assumptions to provide a criterion to prove finiteness properties for the fundamental group.

We need to introduce one further space construction. It is another variant of the nerve construction. For $x \in \Ob(\cat)$ let $E(x)$ be the set of equivalence classes in $a \in \eltry(-,x) \smallsetminus \eltry^\times(x,x)$ modulo the equivalence relation that $\bar{a} = \bar{a\smash'}$ if there exists a $g \in \cat^\times$ with $ga = a'$. We define a relation $\le$ on $E(x)$ by declaring $\bar{a} \le \bar{b}$ if there is an $f \in \cat$ with $fa = b$. Note that if $g$ and $f$ as above exist, they lie in $\eltry$ so the description can be formulated purely in terms of $\eltry$. As in Lemma~\ref{lem:poset_contractible} one sees that $\le$ is a partial order on $E(x)$, however it is usually not contractible.

\begin{theorem}
Let $\cat$ be a right-Ore category and let $* \in \Ob(\cat)$. Let $\gars$ be a locally finite left- or right-Garside family that is closed under factors. Let $\rho \colon \Ob(\cat) \to \N$ be a height function such that $\{x \in \Ob(\cat) \mid \rho(x) \le n\}$ is finite for every $n \in \N$.
\label{thm:generic_proof}
Assume
\begin{enumerate}[align=left,leftmargin=*,widest=(\textsc{stab})]
\item[\mlabel{stab}{\textsc{stab}}] $\cat^\times(x,x)$ is of type $F_n$ for all $x$,
\item[\mlabel{lk}{\textsc{lk}}] $\abs{E(x)}$ is $(n-1)$-connected for all $x$ with $\rho(x)$ beyond a fixed bound.
\end{enumerate}
(If $\rho$ is unbounded on the component of $*$ then it suffices if \eqref{stab} holds for every $x$ with $\rho(x)$ beyond a fixed bound.)

Then $\pi_1(\cat,*)$ is of type $F_n$.
\end{theorem}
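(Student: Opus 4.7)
The strategy is to apply Brown's criterion (Theorem~\ref{thm:browns_criterion}) to the action of $\pi_1(\cat,*)$ on a cocompact sublevel set of the contractible complex $X$ from Section~\ref{sec:space_ore_garside}, using the height function $\rho$ to set up combinatorial Morse theory via Lemma~\ref{lem:morse}.

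First I would promote $\rho$ to a Morse function $\hat{\rho}$ on $X^{(0)}$ by setting $\hat{\rho}(\bar{a}) = \rho(y)$ when $a \in \Ore(\cat)(*,y)$. This is well-defined on $\cat^\times$-classes because the height-function property forces $\rho$ to be invariant under modification by invertibles. If $\bar{a} \ne \bar{b}$ are adjacent in $X$, we may assume $\bar{a} \preceq \bar{b}$ with $ae = b$ and $e \in \eltry$ non-invertible (otherwise $\bar{a} = \bar{b}$), which forces $\hat{\rho}(\bar{a}) < \hat{\rho}(\bar{b})$ by the defining property of a height function. So $\hat\rho$ is a genuine Morse function.

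The geometric heart of the argument is the identification of the descending link of a vertex $\bar{v}$ with $v \in \Ore(\cat)(*,y)$ as $\abs{E(y)}$. Any descending vertex has the form $\overline{v e^{-1}}$ for a non-invertible $e \in \eltry(-,y)$; modifying $v$ by $\cat^\times$ corresponds to precomposing $e$ by an invertible, and this sets up a bijection between the descending vertices and the classes in $E(y)$. A direct computation gives $\overline{v e_1^{-1}} \preceq \overline{v e_2^{-1}}$ in $X$ iff $e_2$ is a right-factor of $e_1$ via an element of $\eltry$, which is the \emph{reverse} of the order defining $E(y)$. Since a poset and its opposite have the same order complex, the descending link of $\bar{v}$ coincides with $\abs{E(y)}$ as simplicial complexes.

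Now I would assemble the pieces. By \eqref{lk}, choose $N_0$ so that $\abs{E(y)}$ is $(n-1)$-connected whenever $\rho(y) \ge N_0$. The Morse Lemma applied with $m = N_0$, $d = n$ then yields that $(X, X_{\hat\rho < N_0})$ is $n$-connected; contractibility of $X$ (Proposition~\ref{prop:stein_farley_contractible}) forces $X_{\hat\rho < N_0}$ to be $(n-1)$-connected. The finiteness of sublevel sets of $\rho$ together with Lemma~\ref{lem:weak_fundamental_domain} implies that $X_{\hat\rho < N_0}$ is covered by $\pi_1(\cat,*)$-translates of the finitely many compact subcomplexes $K_y \cap X_{\hat\rho < N_0}$ with $\rho(y) < N_0$, hence is cocompact. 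By Lemma~\ref{lem:stabilizers}, every $p$-cell stabilizer is a finite-index subgroup of some $\cat^\times(x,x)$, and since finite-index subgroups inherit type $F_n$, hypothesis \eqref{stab} gives that all $p$-cell stabilizers are of type $F_n$, hence a fortiori of type $F_{n-p}$. Brown's criterion then delivers that $\pi_1(\cat,*)$ is of type $F_n$. The main obstacle is the descending-link identification itself: the source/target conventions, the equivalence relation underlying $E(y)$, and the reversal of the natural order when passing from $X$ to the link must be reconciled carefully; everything else is a direct assembly of earlier material. The parenthetical refinement is handled by additionally discarding from the sublevel set those cells whose vertices sit below the threshold at which \eqref{stab} begins to hold, which leaves the Morse argument intact while ensuring \eqref{stab} applies to every remaining cell stabilizer.
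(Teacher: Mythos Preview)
Your proposal is correct and follows essentially the same approach as the paper: promote $\rho$ to a Morse function on $X$, identify descending links with $\abs{E(y)}$ via an order-reversing bijection, use the Morse Lemma to show a sublevel set is $(n-1)$-connected, and apply Brown's criterion after checking cocompactness and stabilizers via the lemmas of Section~\ref{sec:space_ore_garside}. The only substantive difference is the parenthetical refinement: the paper handles it by replacing $\cat$ with the subcategory $\cat_{\rho \ge M}$ (and moving the basepoint), observing that $\pi_1$ is unchanged and that the complexes $E(y)$ agree for large $\rho(y)$, so the entire argument can be rerun on the smaller category; your version of ``discarding low cells from the sublevel set'' gestures at the same idea but, as stated, does not explain why the truncated complex remains $(n-1)$-connected --- the Morse argument established connectivity of $X_{\hat\rho < N_0}$, not of $X_{M \le \hat\rho < N_0}$, and it is precisely the category replacement (which makes the truncated $X$ itself contractible) that closes this gap.
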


\begin{remark}
Recall that $\cat$ can be replaced by the component of $*$ in $\cat$ so all assumptions need to be made only for that component.
 \end{remark}
 
\begin{proof}
We take $X$ to be the complex constructed in Section~\ref{sec:finiteness_properties}. Assume first that \eqref{stab} holds for all $x \in \Ob(\cat)$.

For a vertex $\bar{a} \in X$ with $a \in \Ore(\cat)(*,x)$ we define $\rho(\bar{a}) = \rho(x)$. This is a $\pi_1(\cat,*)$-invariant Morse function which we think of as height. For $n \in \N$ we consider the subcomplex $X_{\rho < n}$ supported on vertices of height $< n$.

We want to see that every $X_{\rho < n}$ is $\pi_1(\cat,*)$-cocompact. To do so we note that $\pi_1(\cat,*)$ acts transitively on vertices $\bar{a}$ with $a \in \Ore(\cat)(*,x)$: indeed, if $\bar{b}$ is another such then $ba^{-1} \in \pi_1(\cat,*)$ takes $\bar{a}$ to $\bar{b}$. It follows from the assumption on $\rho$ that there are only finitely many vertices $\bar{a}$ with $\rho(\bar{a}) < n$ up to the $\pi_1(\cat,*)$-action. Cocompactness now follows from Observation~\ref{obs:loc_fin}.

Stabilizers are of type $F_n$ by Lemma~\ref{lem:stabilizers} because finiteness properties are inherited by finite-index subgroups.

Let $N$ be large enough so that all the $x \in \Ob(\cat)$ for which the nerve of $\abs{E(x)}$ is not $(n-1)$-connected have $\rho(x) < N$. We have just seen that $\pi_1(\cat,*)$ acts on $X_{\rho < N}$ cocompactly with stabilizers of type $F_n$, so once we show that $X_{\rho < N}$ is $(n-1)$-connected, we are done by Theorem~\ref{thm:browns_criterion}. We want to apply the Morse Lemma (Lemma~\ref{lem:morse}) so let us look at the descending link of a vertex $\bar{b}$ of $X$, where $b \in \cat(*,x)$. The vertices in the descending link are the $\bar{a}$ that are comparable with $\bar{b}$ and have $\rho(\bar{a}) < \rho(\bar{b})$. The condition on the height shows that $a$ cannot be a right-multiple of $b$ but has to be a left-factor. Thus $a^{-1}b \in \eltry(-,x)$ and the descending link of $\bar{b}$ is the realization of $\{\bar{a} \mid a \prec b\}$. We see that the map $\eltry(-,x) \smallsetminus \eltry(x,x) \to \{\bar{a} \mid a \prec b\}$ that takes $f$ to $\overline{a f^{-1}}$ is an order-reversing surjection. The definition of $E(x)$ is made so that the induced map $E(x) \to \{\bar{a} \mid a \prec b\}$ is well-defined and a order-reversing bijection. Since $\abs{E(x)}$ is $(n-1)$-connected by assumption, this completes the proof in the case that \eqref{stab} holds for all $x$.

If \eqref{stab} only holds for $x$ with $\rho(x) \ge M$ let $*'$ be in the component of $*$ satisfying $\rho(*') > M$. Since $\cat$ is Ore, one sees that
\[
\pi_1(\cat,*) = \pi_1(\cat,*') = \pi_1(\cat_{\rho \ge M}, x_0)
\]
where $\cat_{\rho \ge M}$ is obtained from $\cat$ by removing objects $y$ with $\rho(y) < M$. Moreover local finiteness of $\gars$ implies that the complexes $E(y)$ for $\cat$ and for $\cat_{\rho \ge r}$ are the same for $y$ in the component of $*'$ once $\rho(y)$ is large enough. One can therefore consider $\cat_{\rho \ge M}$ instead of $\cat$ with the effect that the groups $\cat^\times(x,x)$ only need to be of type $F_n$ when $\rho(x) \ge M$.
\end{proof}

\begin{corollary}
\label{cor:generic_proof}
Let $\cat$, $\gars$, $\rho$, $*$ be as in the theorem.
If $\cat^\times(x,x)$ is of type $F_\infty$ for every $x$ and the connectivity of $\abs{E(x)}$ tends to infinity for $\rho(x) \to \infty$ then $\pi_1(\cat,*)$ is of type $F_\infty$.
\end{corollary}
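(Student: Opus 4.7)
The plan is to deduce this directly from Theorem~\ref{thm:generic_proof} by verifying that both hypotheses \eqref{stab} and \eqref{lk} hold for every fixed $n \in \N$; since being of type $F_\infty$ is by definition the same as being of type $F_n$ for every $n$, this suffices.

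First, I would fix an arbitrary $n \in \N$. The condition \eqref{stab} is immediate: by assumption every automorphism group $\cat^\times(x,x)$ is of type $F_\infty$, and type $F_\infty$ implies type $F_n$, so $\cat^\times(x,x)$ is of type $F_n$ for every $x \in \Ob(\cat)$. Next, to verify \eqref{lk}, I would use the hypothesis that the connectivity of $\abs{E(x)}$ tends to infinity as $\rho(x) \to \infty$. Unpacking the definition, this says that for any $k \in \N$ there exists a bound $M_k$ such that $\abs{E(x)}$ is $k$-connected whenever $\rho(x) \ge M_k$. Applying this with $k = n-1$ yields a bound $M_{n-1}$ such that $\abs{E(x)}$ is $(n-1)$-connected for all $x$ with $\rho(x) \ge M_{n-1}$, which is exactly condition \eqref{lk}.

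With both hypotheses verified, Theorem~\ref{thm:generic_proof} applied to this value of $n$ gives that $\pi_1(\cat,*)$ is of type $F_n$. Since $n$ was arbitrary, $\pi_1(\cat,*)$ is of type $F_n$ for every $n$, hence of type $F_\infty$ by definition.

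There is essentially no obstacle here; the corollary is just the ``$n \to \infty$'' repackaging of the theorem. The only thing worth noting is that the bound $M_{n-1}$ furnished by the connectivity hypothesis depends on $n$, but this is harmless because Theorem~\ref{thm:generic_proof} is applied separately for each $n$ and each application is allowed its own bound.
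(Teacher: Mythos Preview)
Your argument is correct and is exactly the intended one: the paper states the corollary without proof, since it is the obvious ``apply the theorem for each $n$'' consequence. There is nothing to add.
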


The construction of $X$ uses two important ideas. One is the passage from $\abs{P}$ to $X$ which is due to Stein, see \cite[Theorem~1.5]{stein92}. The other is to take $P$ to consist of $\cat^\times$-equivalence classes and goes back to \cite{bux16}. Apart from these ideas the main difficulty in proving that $\pi_1(\cat,*)$ is of type $F_n$ lies in establishing the connectivity properties of the complexes $\abs{E(x)}$. This problem depends individually on the concrete setup and we will see various examples later.


\subsection{Example: $F$ is of type $F_\infty$}

As a first illustration of the results in this section we reprove a result due to Brown and Geoghegan~\cite{brown84}:

\begin{proposition}
\label{prop:f_finfty}
Thompson's group $F$ is of type $F_\infty$.
\end{proposition}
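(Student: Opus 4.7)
The plan is to apply Corollary~\ref{cor:generic_proof} to the category $\cat = \thomcat$ with the left-Garside family $\gars = \Div(\Delta)$ from Proposition~\ref{prop:f_garside_map} and the height function $\rho \colon \Ob(\thomcat) = \N \to \N$ equal to the identity.

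First I would verify the general hypotheses. The category $\thomcat$ is right-Ore by Proposition~\ref{prop:thomcat_ore}, and by Corollary~\ref{cor:f_garside_family} the family $\gars$ is closed under factors with each $\gars(n,-)$ finite. Any non-trivial forest strictly increases the number of leaves, so $\thomcat^\times(n,n) = \{1_n\}$; hence $\gars$ is locally finite and the stabilizer hypothesis is trivially satisfied (the stabilizers are trivial, which is of type $F_\infty$). The sublevel sets $\{n : \rho(n) \le N\}$ are finite, so the entire proof reduces to checking that the connectivity of $\abs{E(n)}$ tends to infinity as $n \to \infty$.

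Next I would identify $E(n)$ combinatorially. Since units are trivial, $E(n)$ is the set of non-identity elementary morphisms in $\eltry(-,n) = \gars(-,n)$, that is, forests with $n$ leaves whose trees are each either a trivial leaf or a single caret, and at least one caret occurs. Such a forest is uniquely determined by the set $P(a) \subseteq \{1,\ldots,n-1\}$ of positions at which two adjacent leaves are merged by a caret, and admissibility is equivalent to $P(a)$ being a non-empty subset containing no two consecutive integers. Unwinding the composition of forests in $\thomcat$, one checks that $\bar{a} \le \bar{b}$ in $E(n)$ iff $P(a) \subseteq P(b)$. Hence $E(n)$ is the face poset (with the empty face removed) of the \emph{matching complex} $M(P_n)$ of the path graph on $n$ vertices, and $\abs{E(n)}$ is canonically homeomorphic, via barycentric subdivision, to $\abs{M(P_n)}$.

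The conclusion then follows from the classical fact that $\abs{M(P_n)}$ becomes arbitrarily highly connected as $n$ grows; indeed, $\abs{M(P_n)}$ is homotopy equivalent to a sphere of dimension approximately $n/3$ (and contractible when $n \equiv 2 \pmod{3}$), so its connectivity tends to infinity with $n$. With this in hand, Corollary~\ref{cor:generic_proof} applies and yields that $F = \pi_1(\thomcat,1)$ is of type $F_\infty$. The only non-formal step is the connectivity bound for $\abs{M(P_n)}$; this is precisely the link-connectivity problem which the introduction flags as the main difficulty, and it can be established directly by a discrete Morse matching or by covering the complex with the stars of appropriately spaced carets and applying the nerve theorem.
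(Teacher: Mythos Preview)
Your proposal is correct and follows essentially the same route as the paper: apply Corollary~\ref{cor:generic_proof} to $\thomcat$ with the Garside family from Corollary~\ref{cor:f_garside_family}, observe that stabilizers are trivial, identify $E(n)$ with the matching complex of the linear (path) graph on $n$ vertices, and conclude by showing its connectivity tends to infinity.

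The only place where you diverge is in the final connectivity step. You invoke the exact homotopy type of $M(P_n)$ (a sphere of dimension roughly $n/3$, or contractible), which the paper also acknowledges is known \cite[Proposition~11.16]{kozlov08}, and suggest proving it via a discrete Morse matching or a nerve argument. The paper instead takes this as an opportunity to introduce the Belk--Forrest criterion (Theorem~\ref{thm:belk_forrest}) on $(mk,k)$-grounded flag complexes, showing $M(L_n)$ is $(\lfloor n/4\rfloor - 1)$-connected. Your bound is sharper; the paper's is more portable to the later examples. Either route closes the argument.
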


We have seen in Proposition~\ref{prop:thomcat_ore} that $\thomcat$ is right-Ore and admits a height function and by Corollary~\ref{cor:f_garside_family} it has a locally finite left-Garside family that is closed under factors. Moreover, $\thomcat^\times(x,x) = \{1_x\}$ for every $x$, so \eqref{stab} is satisfied as well. It only remains to verify \eqref{lk}. Although things are not always as easy, we remark that this is the typical situation: property \eqref{lk} is where one actually needs to show something.

To understand the complexes $\abs{E(n)}$ we first need to unravel the definition. Recall that a \emph{matching} of a graph $\Gamma$ is a set of edges $M \subseteq E(\Gamma)$ that are pairwise disjoint. Matchings are ordered by containment and we denote the poset of matchings by $\match(\Gamma)$. In fact, since every subset of a matching is again a matching, $\match(\Gamma)$ is (the face poset of) a simplicial complex, the \emph{matching complex}. We denote by $L_n$ the \emph{linear graph} on $n$ vertices $\{1,\ldots,n\}$ so its edges are $\{i,i+1\}$ for $1 \le i <n$.

\begin{lemma}
\label{lem:link_match_linear}
The poset $E_\thomcat(n)$ is isomorphic to $\match(L_n)$.
\end{lemma}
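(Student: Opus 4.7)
The plan is to exhibit an explicit order-isomorphism between the two posets by sending an elementary forest to the set of pairs of adjacent leaves joined by a caret. First I would unfold the definitions in this setting: since $\thomcat^\times(x,x) = \{1_x\}$ for all $x$, the equivalence classes in the definition of $E(x)$ are singletons, and $\eltry = \gars^\sharp$ coincides with $\gars = \Div(\Delta)$, which (by the proof of Proposition~\ref{prop:f_garside_map}) consists of forests each of whose trees is either trivial or a single caret. Thus $E_\thomcat(n)$ is the poset of non-identity such ``elementary'' forests with source $n$, ordered by $a \le b$ iff $fa = b$ for some $f \in \thomcat$.

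Next I would define $\phi \colon E_\thomcat(n) \to \match(L_n)$ by sending an elementary forest $a$ (with source $n$) to the set
\[
M_a \defeq \{\{i,i+1\} \mid \text{the $i$th and $(i+1)$th leaves of $a$ share a caret in $a$}\}\text{.}
\]
Disjointness of the trees of $a$ forces the pairs in $M_a$ to be pairwise disjoint, so $M_a$ is a matching of $L_n$, and $M_a$ is non-empty precisely when $a \ne 1_n$. Conversely any non-empty matching $M$ is realized uniquely by the elementary forest with a caret at each pair in $M$ and a trivial tree at each unmatched vertex. So $\phi$ is a bijection onto the non-empty matchings, matching the paper's convention for $\match(L_n)$.

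Finally I would check that $\phi$ preserves and reflects the order. The forward direction is essentially free: if $b = fa$ then $f$ is ``stacked on top'' of $a$ in the sense of Figure~\ref{fig:forest_mult}, so every caret of $a$ survives as a caret of $b$ and hence $M_a \subseteq M_b$. For the reverse direction, the key observation is that any $\{i,i+1\} \in M_b \setminus M_a$ must consist of two leaves each of which is a trivial root of $a$: otherwise one of them sits under a caret of $a$ whose pair of leaves lies in $M_a \subseteq M_b$ and meets $\{i,i+1\}$, contradicting that $M_b$ is a matching. Given this, I can define $f$ to be the forest on the $m_a$ roots of $a$ that places a caret at each such adjacent pair of trivial roots (coming from $M_b \setminus M_a$) and a trivial tree everywhere else; stacking $f$ on top of $a$ then yields an elementary forest with matching $M_a \cup (M_b \setminus M_a) = M_b$, which by uniqueness equals $b$. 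The only nonroutine step is this last construction of $f$, and it is really just bookkeeping around the disjointness observation, so I do not expect any genuine obstacle.
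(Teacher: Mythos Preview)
Your proof is correct and follows essentially the same approach as the paper: both send an elementary forest to the matching recording which adjacent pairs of leaves are joined by a caret, and both verify that this is an order-isomorphism. Your version is more explicit in checking order-reflection (constructing $f$ from $M_b \smallsetminus M_a$), which the paper leaves implicit.
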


\begin{proof}
Let $f \in \eltry_\thomcat(-,n)$, so $f$ is an element of $E_\thomcat(n)$. We identify the roots of $f$ with the vertices of the linear graph $L_n$ on the vertices $\{1,\ldots,n\}$. Every caret of $f$ connects two of these roots and thus corresponds to an edge of $L_n$. All these edges are disjoint so the resulting subgraph $M_f$ of $L_n$ is a matching. It is clear that conversely every matching of $L_n$ arises in a unique way from an elementary forest.

If $h \le f$ then $h$ is a left-multiple of $f$, that is, $f$ can be obtained from $h$ by adding carets to some roots of $h$ that do not have carets yet. On the level of graphs this means that $M_f$ is obtained from $M_h$ by adding edges so that $M_h \le M_f$ in the poset of matchings.
\end{proof}

\begin{remark}
In particular, $E_\thomcat(n)$ is (the face poset of) a simplicial complex. Note that the realization as a poset is the barycentric subdivision of the realization as a simplicial complex, and in particular both are homeomorphic. So there is no harm in working with the coarser cell structure where elements of $E_\thomcat(n)$ are simplices rather than vertices. This fact applies in most of our cases.
\end{remark}

Matching complexes of various graphs have been studied intensely and their connectivity properties can be verified in various ways \cite{bjoerner94}. In fact, for linear and cyclic graphs the precise homotopy type is known \cite[Proposition~11.16]{kozlov08}.

Rather than using the known optimal connectivity bounds we use the opportunity to introduce a criterion due to Belk and Forrest \cite[Theorem~4.9]{belk} that is particularly well suited to verifying that the connectivity of the spaces $E(x)$ tends to infinity in easier cases. We need to introduce some notation.

An abstract simplicial complex $X$ is \emph{flag} if every set of pairwise adjacent vertices forms a simplex. A simplex $\sigma$ in a simplicial flag complex is called a \emph{$k$-ground} for $k \in \N$ if every vertex of $X$ is connected to all but at most $k$ vertices of $\sigma$. The complex is said to be \emph{$(n,k)$-grounded} if there is an $n$-simplex that is a $k$-ground.

\begin{theorem}
\label{thm:belk_forrest}
For $m,k \in \N$ every $(mk,k)$-grounded flag complex is $(m-1)$-connected.
\end{theorem}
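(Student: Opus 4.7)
The plan is to cover $X$ by the closed stars of the vertices of a $k$-ground simplex and apply the nerve lemma. Write $\sigma = \{v_0, \ldots, v_{mk}\}$ for an $(mk)$-simplex that is a $k$-ground in $X$, let $X_i \defeq \st(v_i)$ denote the closed star of $v_i$ in $X$, and set $Y \defeq \bigcup_{i=0}^{mk} X_i$.

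First I would verify that $X^{(m-1)} \subseteq Y$. Any simplex $\tau$ of $X$ of dimension at most $m-1$ has at most $m$ vertices, and each of these is non-adjacent to at most $k$ vertices of $\sigma$; hence at most $mk$ of the $mk+1$ vertices of $\sigma$ fail to be adjacent to some vertex of $\tau$, so there is some $v_i$ adjacent to every vertex of $\tau$. By the flag hypothesis, $\tau \cup \{v_i\}$ is a simplex of $X$, so $\tau \in X_i \subseteq Y$. Second, I would show that every nonempty intersection of the cover $\{X_i\}$ of $Y$ is contractible: for $\emptyset \neq I \subseteq \{0,\ldots,mk\}$, the set $\tau_I \defeq \{v_i : i \in I\}$ is a face of $\sigma$, and a simplex $\rho$ belongs to all $X_i$ with $i \in I$ precisely when each vertex of $\rho$ is adjacent to each $v_i$, $i \in I$. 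By flagness this is equivalent to $\rho \cup \tau_I$ being a simplex, so
\[
\bigcap_{i \in I} X_i = \st(\tau_I) = \tau_I * \lk(\tau_I),
\]
an iterated cone, which is nonempty and contractible.

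The nerve of the cover $\{X_i\}$ is therefore the full simplex on $\{0,\ldots,mk\}$, and the simplicial nerve lemma (for a cover by subcomplexes with contractible intersections) gives that $Y$ is homotopy equivalent to this simplex and hence contractible. Any map $S^i \to X$ with $i \le m-1$ can be cellularly approximated into $X^{(i)} \subseteq X^{(m-1)} \subseteq Y$ and then nullhomotoped inside $Y$; so $X$ is $(m-1)$-connected. The main subtlety is the appeal to the nerve lemma for closed subcomplex covers, but this is standard in the simplicial setting because the $X_i$ are full subcomplexes meeting pairwise in subcomplexes.
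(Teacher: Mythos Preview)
The paper does not supply a proof of this statement; it is quoted from \cite[Theorem~4.9]{belk} (Belk--Forrest), with only a one-line remark that the degenerate cases $m=0$ and $k=0$ are clear. There is therefore no argument in the paper to compare yours against.

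Your proof is correct. The pigeonhole step showing $X^{(m-1)}\subseteq Y$ is exactly what the $(mk,k)$-ground hypothesis is designed to give; the identification $\bigcap_{i\in I}\st(v_i)=\st(\tau_I)$ is valid because in a flag complex the condition ``$\rho\cup\{v_i\}$ is a simplex for every $i\in I$'' is equivalent to ``$\rho\cup\tau_I$ is a simplex''; and the nerve theorem for a finite cover by subcomplexes all of whose nonempty intersections are contractible is standard. The cellular-approximation step then transfers contractibility of $Y$ to $(m-1)$-connectedness of $X$. One incidental bonus: your argument works uniformly for all $m,k\ge 0$, so you do not need the separate treatment of the cases $m=0$ and $k=0$ that the paper adds after the theorem statement.
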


The reference requires $m,k \ge 1$ but it is clear that every $(0,k)$-grounded flag complex is non-empty, and every $(0,0)$-grounded flag complex is a cone an therefore contractible.

Using Theorem~\ref{thm:belk_forrest} we verify:

\begin{lemma}
\label{lem:match_connectivity}
For every $n \in \N$ let $\Gamma_n$ be a subgraph of $K_n$ containing $L_n$.
The connectivity of $\match(\Gamma_n)$ goes to infinity as $n$ goes to infinity.
\end{lemma}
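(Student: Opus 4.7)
The plan is to apply the Belk--Forrest criterion (Theorem~\ref{thm:belk_forrest}). First, I would check that $\match(\Gamma_n)$ is a flag simplicial complex. The vertices of $\match(\Gamma_n)$ are the edges of $\Gamma_n$, and a collection of edges spans a simplex precisely when they are pairwise disjoint; since pairwise disjointness is a condition on pairs, flagness is automatic.

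Next I would exhibit an explicit ``ground'' simplex using the hypothesis $L_n \subseteq \Gamma_n$. Consider the matching
\[
M_n = \bigl\{\{1,2\},\{3,4\},\{5,6\},\ldots\bigr\} \subseteq E(L_n) \subseteq E(\Gamma_n),
\]
of size $\lfloor n/2 \rfloor$. Any edge $e = \{a,b\}$ of $\Gamma_n$ can share a vertex with at most one edge of $M_n$ through $a$ and at most one through $b$, so $e$ is disjoint from all but at most two edges of $M_n$. Hence $M_n$, viewed as a simplex of $\match(\Gamma_n)$, is a $2$-ground.

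Now fix $m \in \N$. As soon as $n \ge 4m+2$ we have $\lfloor n/2 \rfloor \ge 2m+1$, so $M_n$ contains a $(2m)$-dimensional subsimplex (i.e.\ a subset of $2m+1$ of its edges). Any such subsimplex is again a $2$-ground, because the ground property is monotone: restricting to a smaller set of vertices of the ground can only decrease the number of vertices of the ground that a given edge meets. Thus $\match(\Gamma_n)$ is $(2m,2)$-grounded, and Theorem~\ref{thm:belk_forrest} with $k=2$ yields that it is $(m-1)$-connected. Letting $m \to \infty$ shows that the connectivity of $\match(\Gamma_n)$ tends to infinity as $n \to \infty$.

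There is no real obstacle; the only point that requires a moment's thought is the choice of ground, where one wants a large matching (to get a high-dimensional simplex) whose edges together cover only a bounded number of endpoints relative to any single edge of $\Gamma_n$ (to keep $k$ small). The ``odd-even'' matching along the spine $L_n$ achieves both, and the constant $k=2$ is forced simply by the fact that edges have two endpoints.
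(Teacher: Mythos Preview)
Your proof is correct and follows essentially the same argument as the paper: both use the Belk--Forrest criterion with the odd--even matching $\{\{1,2\},\{3,4\},\ldots\}$ along $L_n$ as a $2$-ground, yielding connectivity roughly $\lfloor n/4\rfloor - 1$. The only (cosmetic) differences are that you make flagness and the monotonicity of the ground property explicit, whereas the paper states the resulting connectivity bound directly.
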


\begin{proof}
Consider the matchings of $L_n$ that use only the edges $\{2i-1, 2i\}$, $1 \le i \le \floor{n/2}$. They form an $(\floor{n/2}-1)$-simplex $\sigma$ in $\match(\Gamma_n)$. If $v = \{j,k\}$ is any edge of $\Gamma_n$, so a vertex of $\match(\Gamma_n)$ then there are at most $2$ vertices of $\sigma$ that $v$ is not connected to: one is $\{j-1,j\}$ or $\{j,j+1\}$, the other is $\{k-1,k,\}$ or $\{k,k+1\}$. This shows that $\match(\Gamma_n)$ is $(\floor{n/2}-1,2)$-grounded, so by Theorem~\ref{thm:belk_forrest} it is $(\floor{n/4}-1)$-connected.
\end{proof}

\begin{proof}[Proof of Proposition~\ref{prop:f_finfty}]
We want to apply Corollary~\ref{cor:generic_proof}. The only thing left to check is condition \eqref{lk}. This follows from Lemmas~\ref{lem:link_match_linear} and~\ref{lem:match_connectivity}.
\end{proof}


\section{The indirect product of two categories}
\label{sec:indirect_product}

The construction introduced in this section will help us to produce more interesting examples. It is usually called the Zappa--Szép product in the literature of groups and monoids, cf.~\cite{brin05}. The Zappa--Szép product naturally generalizes the semidirect product in the same way as the semidirect product generalizes the direct product. We think that such a basic construction should have a simpler name and therefore call it the \emph{indirect product}.

For motivation, let $M$ be a monoid (or group) whose multiplication we denote by $\circ$ and suppose that $M$ decomposes uniquely as $M = A \circ B$. By this we mean that $A$ and $B$ are submonoids of $M$ such that every element $m \in M$ can be written in a unique way as $m = a \circ b$ with $a \in A$ and $b \in B$. In particular, if $b' \in B$ and $a' \in A$, the product $m = b' \circ a'$ can be rewritten as $b'\circ a'= a\circ b$. This allows to formally define maps $A \times B \to B, (a',b') \mapsto a' \cdot b' \defeq a$ and $B \times A \to A, (b',a') \mapsto {b'}^{a'} \defeq b$ so that
\[
b \circ a = (b \cdot a) \circ b^a\text{.}
\]
These maps turn out to be actions of monoids on sets. If both actions are trivial then $M$ is a direct product, if one of the actions is trivial then $M$ is a semidirect product, and in general it is an indirect product.

We therefore start by introducing the appropriate notion of actions of categories.

\subsection{Actions}

Let $\cat$ be a category and let $(X_m)_{m \in \Ob(\cat)}$ be a family of sets, one for each object of $\cat$. We say that a left action of $\cat$ on $(X_m)_m$ is a family of maps
\begin{align*}
\cat(n,m) \times X_m &\to X_n\\
(f,s) &\mapsto f \cdot s
\end{align*}
satisfying $1_m \cdot s = s$ for all $m \in \Ob(\cat)$ and $s \in X_m$ and $fg \cdot s = f \cdot (g \cdot s)$ whenever $fg$ is defined. A right action is defined analogously. An action is said to be \emph{injective} if $f \cdot x = f \cdot y$ implies $x = y$. Note that actions of groupoids are always injective.

In our examples the family $(X_m)_m$ itself will consist of morphisms of a category with the same objects as $\cat$. We have to bear in mind, however, that the action is on these as sets and does not preserve products.

\subsection{The indirect product}

Let $\cat$ be a category and let $\thomcat$ and $\grpd$ be subcategories. We say that $\cat$ is an \emph{internal indirect product} $\thomcat \bowtie \grpd$ if every $h \in \cat$ can be written in a unique way as $h = fg$ with $f \in \thomcat$ and $g \in \grpd$. Note that this means in particular that $\Ob(\cat) = \Ob(\thomcat) = \Ob(\grpd)$. Given elements $f \in \thomcat(x,-)$ and $g \in \grpd(-,x)$ there exist then unique elements $f' \in \thomcat$ and $g' \in \grpd$ such that $gf = f'g'$, see Figure~\ref{fig:zappa-szep_gen}. In this situation we define $g \cdot f$ to be $f'$ and $g^f$ to be $g'$.

The following properties are readily verified, see Figure~\ref{fig:zappa-szep_ones} and~\ref{fig:zappa-szep_prod}, the last four hold whenever one of the sides is defined:
\begin{enumerate}[label=(IP\arabic{*}), ref=IP\arabic{*},leftmargin=*]
\item $1_x \cdot f = f$ for $f \in \thomcat(x,-)$,\label{item:zappa-szep_1_acts_left}
\item $g^{1_y} = g$ for $g \in \grpd(-,y)$,\label{item:zappa-szep_1_acts_right}
\item $(g_1g_2) \cdot f = g_1 \cdot (g_2 \cdot f)$,\label{item:zappa-szep_product_acts_left}
\item $g^{f_1f_2} = (g^{f_1})^{f_2}$,\label{item:zappa-szep_product_acts_right}
\item $1_x^f = 1_y$ for $f \in \thomcat(x,y)$,\label{item:zappa-szep_1_is_acted_left}
\item $g \cdot 1_y = 1_z$ for $g \in \grpd(z,x)$,\label{item:zappa-szep_1_is_acted_right}
\item $(g_1g_2)^f = g_1^{(g_2 \cdot f)}g_2^f$,\label{item:zappa-szep_product_is_acted_left}
\item $g \cdot (f_1f_2) = (g \cdot f_1)(g_2^{f_1} \cdot f_2)$.\label{item:zappa-szep_product_is_acted_right}
\end{enumerate}

\begin{figure}

\subfloat[]{
\begin{tikzpicture}[yscale=-1,scale=1.5]
\node (z) at (0,0) {$\bullet$};
\node (x) at (1,0) {$\bullet$};
\node (y) at (1,1) {$\bullet$};
\node (w) at (0,1) {$\bullet$};
\path[<-]
(z) edge node[above]{$g$} (x)
(x) edge node[right]{$f$} (y);
\path[dashed,<-]
(w) edge node[below]{$g^f$} (y)
(z) edge node[left]{$g \cdot f$} (w);
\end{tikzpicture}
\label{fig:zappa-szep_gen}
}
\hfill
\subfloat[]{
\begin{tikzpicture}[yscale=-1,scale=1.5]
\node (z) at (0,0) {$x$};
\node (x) at (1,0) {$x$};
\node (y) at (1,1) {$y$};
\node (w) at (0,1) {$y$};
\path[<-]
(z) edge node[above]{$1_x$} (x)
(x) edge node[right]{$f$} (y)
(w) edge node[below]{$1_y$} (y)
(z) edge node[left]{$f$} (w);
\end{tikzpicture}
\label{fig:zappa-szep_ones}
}
\hfill
\subfloat[]{
\begin{tikzpicture}[yscale=-1,scale=1.5]
\node (z) at (0,0) {$\bullet$};
\node (x) at (1,0) {$\bullet$};
\node (y) at (1,1) {$\bullet$};
\node (w) at (0,1) {$\bullet$};
\node (u) at (-1,0) {$\bullet$};
\node (v) at (-1,1) {$\bullet$};
\path[<-]
(u) edge node[above]{$g_1$} (z)
(v) edge node[below]{$g_1^{g_2 \cdot f}$} (w)
(u) edge node[left]{$g_1 \cdot (g_2 \cdot f)$} (v)
(z) edge node[above]{$g_2$} (x)
(x) edge node[right]{$f$} (y)
(w) edge node[below]{$g_2^f$} (y)
(z) edge node[right]{$g_2 \cdot f$} (w);
\end{tikzpicture}
\label{fig:zappa-szep_prod}
}
\caption{The indirect product.}
\label{fig:zappa-szep}
\end{figure}
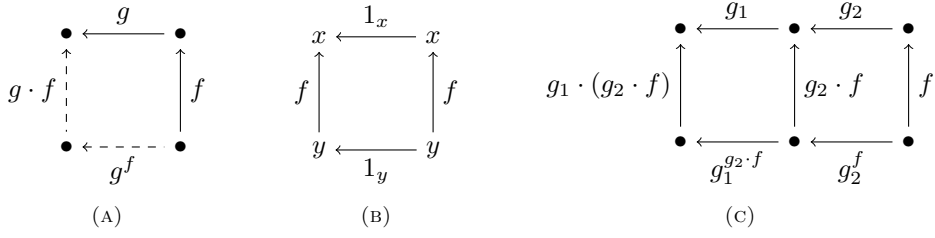

The first four relations say that the map $(g,f) \mapsto g \cdot f$ is an left action of $\grpd$ on the sets $(\thomcat(x,-))_x$ and that $(g,f) \mapsto g^f$ is a right action of $\thomcat$ on the sets $(\grpd(-,y))_y$. The next two relations say that identity elements are taken to identity elements, while the last two are cocycle conditions. We call actions satisfying \eqref{item:zappa-szep_1_acts_left} to \eqref{item:zappa-szep_product_is_acted_right} \emph{indirect product actions}.

Now assume that conversely categories $\thomcat$ and $\grpd$ with $\Ob(\thomcat) = \Ob(\grpd)$ are given together with indirect product actions of $\thomcat$ and $\grpd$ on each other. Then the \emph{external indirect product} $\cat = \thomcat \bowtie \grpd$ is defined to have objects $\Ob(\cat) = \Ob(\thomcat) = \Ob(\grpd)$ and morphisms
\[
\cat = \bigcup_{x \in \Ob(\cat)} \{ (f,g) \mid f \in \thomcat(-,x), g \in \grpd(x,-)\}\text{.}
\]
Composition is defined by
\begin{equation}
(f_1,g_1)(f_2,g_2) = (f_1(g_1 \cdot f_2),g_1^{f_2}g_2)\text{.}\label{eq:zappa-szep_definition}
\end{equation}

\begin{figure}
\centering
\scriptsize
\begin{tikzpicture}[yscale=-1,scale=2]
\colorlet{darkgreen}{green!65!black}
\node (w) at (0,0) {$\bullet$};
\node (x) at (1,1) {$\bullet$};
\node (y) at (2,2) {$\bullet$};
\node (z) at (3,3) {$\bullet$};

\node (r) at (0,1) {$\bullet$};
\node (s) at (1,2) {$\bullet$};
\node (t) at (2,3) {$\bullet$};

\node (o) at (0,2) {$\bullet$};
\node (p) at (1,3) {$\bullet$};

\node (m) at (0,3) {$\bullet$};
\path[<-]
(r) edge node[above]{$g_1$} (x)
(x) edge node[right]{$f_2$} (s)
(s) edge node[above]{$g_2$} (y)
(y) edge node[right]{$f_3$} (t);
\path[<-]
(w) edge[ultra thick,darkgreen] (r)
(t) edge[ultra thick,darkgreen] (z);
\path[<-]
(w) edge[ultra thick,dashed] node[right]{$f_1$} (r)
(t) edge[ultra thick,dashed] node[above]{$g_3$} (z);
\path[<-]
(r) edge[ultra thick,darkgreen] node[fill=white]{$g_1 \cdot f_2$} (o)
(o) edge node[above]{$g_1^{f_2}$} (s)
(s) edge node[fill=white]{$g_2 \cdot f_3$} (p)
(p) edge[ultra thick] node[above]{$g_2^{f_3}$} (t);
\path[<-]
(o) edge node[fill=white,shift={(.5,.2)}]{$g_1^{f_2} \cdot (g_2 \cdot f_3)$} (m)
(m) edge node[below]{$(g_1^{f_2})^{g_2 \cdot f_3}$} (p);
\path[<-]
(r) edge[bend left=90,ultra thick,dashed] node[left]{$g_1 \cdot (f_2(g_2\cdot f_3))$} (m);
\path[<-]
(m) edge[bend left=90,ultra thick,darkgreen] node[below]{$(g_1^{f_2}g_2)^{f_3}$} (t);
\path[<-]
(o) edge[bend left=90,ultra thick,darkgreen] node[fill=white,shift={(-.1,-.3)}]{$(g_1^{f_2}g_2) \cdot f_3$} (m);
\path[<-]
(m) edge[bend left=90,ultra thick,dashed] node[fill=white,shift={(.5,-.05)}]{$g_1^{f_2(g_2 \cdot f_3)}$} (p);
\end{tikzpicture}
\caption{Associativity in $\thomcat \bowtie \grpd$. The dashed thick black and the thick green path are the components of $(f_1,g_1)((f_2,g_2)(f_3,g_3))$ and $((f_1,g_1)(f_2,g_2))(f_3,g_3)$ respectively.}
\label{fig:zappa-szep_associative}
\end{figure}

\begin{lemma}
The external indirect product $\thomcat \bowtie \grpd$ is well-defined. It is naturally isomorphic to the internal indirect product of the copies of $\thomcat$ and $\grpd$ inside $\thomcat \bowtie \grpd$.
\end{lemma}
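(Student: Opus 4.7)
The plan is to verify three things in turn: that \eqref{eq:zappa-szep_definition} makes $\thomcat \bowtie \grpd$ into a category, that $\thomcat$ and $\grpd$ embed as subcategories, and that every morphism factors uniquely in the prescribed order. The eight identities \eqref{item:zappa-szep_1_acts_left}--\eqref{item:zappa-szep_product_is_acted_right} are precisely what is needed at each step.

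For the category structure, I would take $(1_x,1_x)$ as the identity at $x$. The unit laws follow from direct substitution into \eqref{eq:zappa-szep_definition}: $(1_x,1_x)(f,g)$ simplifies by \eqref{item:zappa-szep_1_acts_left} in the first coordinate and \eqref{item:zappa-szep_1_is_acted_left} in the second, while $(f,g)(1_x,1_x)$ uses the symmetric pair \eqref{item:zappa-szep_1_is_acted_right} and \eqref{item:zappa-szep_1_acts_right}. The main technical step is associativity. Expanding both $\bigl((f_1,g_1)(f_2,g_2)\bigr)(f_3,g_3)$ and $(f_1,g_1)\bigl((f_2,g_2)(f_3,g_3)\bigr)$ via \eqref{eq:zappa-szep_definition}, the equality of the $\thomcat$-components reduces to rewriting $g_1 \cdot (f_2(g_2 \cdot f_3))$ in two ways using \eqref{item:zappa-szep_product_acts_left} and the cocycle \eqref{item:zappa-szep_product_is_acted_right}; the equality of the $\grpd$-components is the symmetric task handled by \eqref{item:zappa-szep_product_acts_right} and the cocycle \eqref{item:zappa-szep_product_is_acted_left}. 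Figure~\ref{fig:zappa-szep_associative} already lays this comparison out diagrammatically and confirms the two bracketings agree.

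Next I would define embeddings $\iota_\thomcat(f) \defeq (f,1_{\mathrm{src}(f)})$ and $\iota_\grpd(g) \defeq (1_{\mathrm{tar}(g)},g)$. Functoriality of $\iota_\thomcat$ reduces to $(f_1,1)(f_2,1) = (f_1f_2,1)$ by \eqref{item:zappa-szep_1_acts_left} and \eqref{item:zappa-szep_1_is_acted_left}; functoriality of $\iota_\grpd$ is symmetric. For the unique factorization, applying \eqref{eq:zappa-szep_definition} to $(f,1_x)(1_x,g)$ with $x$ the middle object and simplifying via \eqref{item:zappa-szep_1_acts_left} and \eqref{item:zappa-szep_1_is_acted_right} returns $(f,g)$; uniqueness is automatic from the pair representation. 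Finally, chasing the defining identity $gf = (g \cdot f)(g^f)$ through the embedded copies shows that the actions reconstructed from this internal decomposition coincide with those originally given, so the isomorphism between the internal and external pictures is natural in the data.

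The only real obstacle is the associativity verification: eight identities interacting through two different bracketings, with multiple layers of actions applied to actions. Conceptually it is straightforward, but keeping track of which object each intermediate morphism sits over is the main source of potential error. Since Figure~\ref{fig:zappa-szep_associative} essentially already performs this comparison, the proof mostly reduces to reading off the diagram.
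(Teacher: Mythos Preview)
Your proposal is correct and follows essentially the same approach as the paper: unit laws from \eqref{item:zappa-szep_1_acts_left}, \eqref{item:zappa-szep_1_acts_right}, \eqref{item:zappa-szep_1_is_acted_left}, \eqref{item:zappa-szep_1_is_acted_right}; associativity by comparing the two bracketings via \eqref{item:zappa-szep_product_acts_left}, \eqref{item:zappa-szep_product_is_acted_right} in the $\thomcat$-component and \eqref{item:zappa-szep_product_acts_right}, \eqref{item:zappa-szep_product_is_acted_left} in the $\grpd$-component (exactly the four displayed equations in the paper's proof); and the embeddings and factorization just as you describe. The paper's proof is slightly terser but otherwise identical in structure.
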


\begin{proof}
That the identity morphisms $(1_x,1_x)$ behave as they should is easily seen using relations \eqref{item:zappa-szep_1_acts_left}, \eqref{item:zappa-szep_1_acts_right}, \eqref{item:zappa-szep_1_is_acted_left}, and \eqref{item:zappa-szep_1_is_acted_right}. To check associativity we verify the four equations
\begin{gather}
g_1^{f_2(g_2 \cdot f_3)} \stackrel{\eqref{item:zappa-szep_product_acts_right}}{=} (g_1^{f_2})^{g_1 \cdot f_3} \label{eq:zappa-szep_associative_1}\mathrlap{,}\\
(g_1^{f_2}g_2) \cdot f_3 \stackrel{\eqref{item:zappa-szep_product_acts_left}}{=} g_1^{f_2} \cdot (g_2 \cdot f_3) \label{eq:zappa-szep_associative_2}\mathrlap{,}\\
g_1^{f_2(g_2 \cdot f_3)}g_2^{f_3} \stackrel{\eqref{eq:zappa-szep_associative_1}}{=} (g_1^{f_2})^{g_1 \cdot f_3} g_2^{f_3} \stackrel{\eqref{item:zappa-szep_product_is_acted_left}}{=}(g_1^{f_2}g_2)^{f_3}\label{eq:zappa-szep_associative_3}\mathrlap{,\quad\text{and}}\\
(g_1 \cdot f_2)((g_1^{f_2}g_2) \cdot f_3) \stackrel{\eqref{eq:zappa-szep_associative_2}}{=} (g_1 \cdot f_2)(g_1^{f_2} \cdot (g_2 \cdot f_3)) \stackrel{\eqref{item:zappa-szep_product_is_acted_right}}{=} g_1 \cdot (f_2(g_2 \cdot f_3))\label{eq:zappa-szep_associative_4}
\end{gather}
see Figure~\ref{fig:zappa-szep_associative}.

The categories $\thomcat$ and $\grpd$ naturally embed into the external indirect product $\thomcat \bowtie \grpd$ as $f \mapsto (f, 1_y)$ for $f \in \thomcat(-,y)$ and $g \mapsto (1_x,g)$ for $g \in \grpd(x,-)$. Any morphism of $\thomcat \bowtie \grpd$ decomposes as $(f,g) = (f,1_y)(1_y,g)$ and it is clear from \eqref{eq:zappa-szep_definition} that respective actions on each other are the ones used to define $\thomcat \bowtie \grpd$.
\end{proof}

If the action of $\grpd$ on $\thomcat$ is trivial then the indirect product is a \emph{semidirect product} $\thomcat \ltimes \grpd$. Similarly, if the action of $\thomcat$ on $\grpd$ is trivial then it is a semidirect product $\thomcat \rtimes \grpd$. Finally, if both actions are trivial then the indirect product is in fact a \emph{direct product} $\thomcat \times \grpd$.

We close the section by collecting facts that ensure that an indirect product is Ore.

\begin{lemma}
\label{lem:zappa-szep_cancellative}
If $\thomcat$ and $\grpd$ are right-cancellative and the action of $\thomcat$ on $\grpd$ is injective then $\thomcat \bowtie \grpd$ is right-cancellative.
Symmetrically, if $\thomcat$ and $\grpd$ are left-cancellative and the action of $\grpd$ on $\thomcat$ is injective then $\thomcat \bowtie \grpd$ is left-cancellative.
\end{lemma}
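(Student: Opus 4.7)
The plan is to prove right-cancellativity directly from the composition formula \eqref{eq:zappa-szep_definition} and then obtain the left-cancellative case by symmetry.

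Suppose $(f_1,g_1)(f,g) = (f_2,g_2)(f,g)$ with both sides defined. By \eqref{eq:zappa-szep_definition}, expanding both sides as elements of $\thomcat \bowtie \grpd$ and comparing components yields the two equations
\[
f_1\,(g_1 \cdot f) = f_2\,(g_2 \cdot f) \qquad\text{and}\qquad g_1^{f}\,g = g_2^{f}\,g.
\]
From the second equation, right-cancellativity of $\grpd$ gives $g_1^{f} = g_2^{f}$. The hypothesis that the action of $\thomcat$ on $\grpd$ is injective (applied to $f$) then forces $g_1 = g_2$. Consequently $g_1 \cdot f = g_2 \cdot f$, and the first equation reduces to $f_1\,(g_1 \cdot f) = f_2\,(g_1 \cdot f)$, so right-cancellativity of $\thomcat$ gives $f_1 = f_2$. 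Hence $(f_1,g_1) = (f_2,g_2)$.

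For the left-cancellative statement, start with $(f,g)(f_1,g_1) = (f,g)(f_2,g_2)$ and expand via \eqref{eq:zappa-szep_definition} to obtain
\[
f\,(g \cdot f_1) = f\,(g \cdot f_2) \qquad\text{and}\qquad g^{f_1}\,g_1 = g^{f_2}\,g_2.
\]
Left-cancellativity of $\thomcat$ applied to the first equation yields $g \cdot f_1 = g \cdot f_2$, and injectivity of the action of $\grpd$ on $\thomcat$ forces $f_1 = f_2$. Therefore $g^{f_1} = g^{f_2}$, and the second equation becomes $g^{f_1}\,g_1 = g^{f_1}\,g_2$, so left-cancellativity of $\grpd$ gives $g_1 = g_2$.

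This is essentially a bookkeeping argument; the only subtlety is making sure the rewritings on both coordinates are coupled correctly, which is why one must cancel on the coordinate where no action appears first (the second coordinate for right-cancellation, the first coordinate for left-cancellation) before invoking injectivity of the cross-action to free up the other coordinate. I do not anticipate any real obstacle; everything follows by a direct two-step application of cancellativity plus injectivity.
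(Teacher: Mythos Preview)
Your proof is correct and matches the paper's argument essentially line for line: expand the product, cancel in $\grpd$ on the second coordinate, use injectivity of the $\thomcat$-action to conclude $g_1=g_2$, then cancel in $\thomcat$ on the first coordinate. The paper only writes out the right-cancellative case and leaves the left-cancellative case to symmetry, whereas you spell out both, but the content is the same.
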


\begin{proof}
If $f_1g_1fg = f_2g_2fg$ then $f_1(g_1 \cdot f) = f_2(g_2 \cdot f)$ and $g_1^fg = g_2^fg$. Since $\grpd$ is right-cancellative the latter equation shows that $g_1^f = g_2^f$ and injectivity of the action then implies $g_1 = g_2$. Putting this in the former equation and using right-cancellativity of $\thomcat$ gives $f_1 = f_2$.
\end{proof}

\begin{observation}
\label{lem:zappa-szep_common_multiples}
Let $\thomcat$ have common right-multiples and let $\grpd$ be a groupoid. Then $\thomcat \bowtie \grpd$ has common right-multiples.
\end{observation}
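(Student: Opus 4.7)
The plan is to build a common right-multiplier $(p_1,q_1), (p_2,q_2)$ for two morphisms $(f_1,g_1), (f_2,g_2)$ sharing a common target $a$. Using the composition formula \eqref{eq:zappa-szep_definition}, the equality $(f_1,g_1)(p_1,q_1) = (f_2,g_2)(p_2,q_2)$ splits into a $\thomcat$-equation $f_1(g_1 \cdot p_1) = f_2(g_2 \cdot p_2)$ and a $\grpd$-equation $g_1^{p_1} q_1 = g_2^{p_2} q_2$. The strategy is to solve these separately, using common right-multiples in $\thomcat$ for the first and the groupoid structure of $\grpd$ for the second.

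For the $\thomcat$-equation, observe that $f_1$ and $f_2$ share the target $a$ (since the target of $(f_i,g_i)$ is the target of $f_i$), so the hypothesis on $\thomcat$ provides $e_1, e_2 \in \thomcat$ with $f_1 e_1 = f_2 e_2$. Since $\grpd$ is a groupoid, each $g_i$ has an inverse in $\grpd$; axioms \eqref{item:zappa-szep_1_acts_left} and \eqref{item:zappa-szep_product_acts_left} then imply that the left action of $\grpd$ on $\thomcat$ is invertible, so setting $p_i \defeq g_i^{-1} \cdot e_i$ yields $g_i \cdot p_i = (g_i g_i^{-1}) \cdot e_i = 1 \cdot e_i = e_i$. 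A short source/target check confirms that $p_i$ has target equal to the source of $g_i$, which is precisely what is needed for $(p_i,q_i)$ to compose with $(f_i,g_i)$ on the right.

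For the $\grpd$-equation, tracing the picture in Figure~\ref{fig:zappa-szep_gen} with $f \leadsto p_i$ and $g \leadsto g_i$ shows that $g_i^{p_i}$ has source and target both equal to the common source $c$ of $e_1, e_2$, hence lies in the vertex group $\grpd(c,c)$. Setting $q_i \defeq (g_i^{p_i})^{-1}$ then yields $g_1^{p_1} q_1 = 1_c = g_2^{p_2} q_2$, so $(p_1,q_1), (p_2,q_2)$ are the desired right-multipliers, producing the common right-multiple $(f_1 e_1, 1_c)$. I do not anticipate a serious obstacle: both half-equations reduce directly to hypotheses already in force, and the only delicate point is the source/target bookkeeping that tracks which intermediate object in the decomposition $(f,g)$ of a morphism plays which role at each stage.
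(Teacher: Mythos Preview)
Your argument is correct and is essentially an explicit unpacking of the paper's two-line proof: the paper simply observes that since $g$ is invertible, $fg$ and $f$ are right-multiples of each other, so a common right-multiple of $f_1,f_2$ in $\thomcat$ is automatically a common right-multiple of $f_1g_1,f_2g_2$ in $\thomcat\bowtie\grpd$. Your multipliers $(p_i,q_i)$ are exactly the pair-decomposition of the paper's implicit multiplier $g_i^{-1}e_i$ (indeed $(g_i^{-1})^{e_i}=(g_i^{p_i})^{-1}$ by \eqref{item:zappa-szep_product_is_acted_left} and \eqref{item:zappa-szep_1_is_acted_left}).

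One small bookkeeping slip, of exactly the kind you anticipated: $g_i^{p_i}$ does \emph{not} generally lie in the vertex group $\grpd(c,c)$. Its target is $c$ (the source of $g_i\cdot p_i=e_i$), but its source is the source $w_i$ of $p_i$, which need not equal $c$. This does not damage the argument: you only need that $g_i^{p_i}$ is invertible with target $c$, so that $q_i=(g_i^{p_i})^{-1}\in\grpd(w_i,c)$ gives $g_i^{p_i}q_i=1_c$ for both $i$, and the common right-multiple $(f_1e_1,1_c)$ is well-formed with source $c$.
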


\begin{proof}
Let $fg \in \thomcat \bowtie \grpd$ with $f \in \thomcat$ and $g \in \grpd$. Since $\grpd$ is a groupoid, $f$ is both a left-factor and a right-multiple of $fg$. It follows that common right-multiples exist in $\thomcat \bowtie \grpd$ because they exist in $\thomcat$.
\end{proof}

\begin{observation}
\label{lem:zappa-szep_invertibles}
Let $\thomcat$ have no non-trivial invertible morphisms and let $\grpd$ be a groupoid. Then $(\thomcat \bowtie \grpd)^\times = \grpd$.
\end{observation}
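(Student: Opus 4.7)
The plan is to verify the two inclusions that make up $(\thomcat \bowtie \grpd)^\times = \grpd$, where $\grpd$ is embedded in $\thomcat \bowtie \grpd$ as the subcategory of pairs $(1_x, g)$.

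The inclusion $\grpd \subseteq (\thomcat \bowtie \grpd)^\times$ is the direct direction: for $g \in \grpd$ of source $y$ and target $x$, the candidate inverse of $(1_x, g)$ is $(1_y, g^{-1})$. This is a short calculation inside the composition formula \eqref{eq:zappa-szep_definition}: the identity-axioms \eqref{item:zappa-szep_1_acts_left}--\eqref{item:zappa-szep_1_is_acted_right} collapse the mixed action terms to identities, and what remains of the $\grpd$-part is $g g^{-1} = 1_x$ (respectively $g^{-1} g = 1_y$).

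For the reverse inclusion, suppose $(f,g)$ is invertible in $\thomcat \bowtie \grpd$ with inverse $(f',g')$. Writing the product out via \eqref{eq:zappa-szep_definition},
\[
(f,g)(f',g') \;=\; \bigl(f(g \cdot f'),\ g^{f'} g'\bigr) \;=\; (1_z, 1_z).
\]
The defining property of the internal indirect product, namely uniqueness of the decomposition $h = f g$ with $f \in \thomcat$ and $g \in \grpd$, applied to the factorization $1_z = 1_z \cdot 1_z$, forces $f(g \cdot f') = 1_z$ in $\thomcat$. So $f$ admits a right-inverse in $\thomcat$. Invoking Lemma~\ref{lem:cancellative_inverse} (cancellativity of $\thomcat$ is available in all concrete settings where this observation is applied) upgrades $f$ to an invertible element of $\thomcat$, and the hypothesis that $\thomcat$ has no non-trivial invertibles then forces $f = 1_z$. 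Hence $(f,g) = (1_z, g)$ is the image of $g \in \grpd$.

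The only conceptual step is to notice that uniqueness of the indirect-product decomposition is precisely what transports identity-hood in $\thomcat \bowtie \grpd$ into identity-hood of the $\thomcat$-factor. Beyond that the argument is mechanical bookkeeping with the axioms \eqref{item:zappa-szep_1_acts_left}--\eqref{item:zappa-szep_product_is_acted_right} together with a single appeal to Lemma~\ref{lem:cancellative_inverse}.
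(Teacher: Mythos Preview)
The paper records this as an observation without proof, so there is nothing to compare against directly. Your argument is correct once cancellativity of $\thomcat$ is assumed, and you flag this honestly. However, the observation as stated does \emph{not} assume cancellativity, and it can be proved without it, so your appeal to Lemma~\ref{lem:cancellative_inverse} is an unnecessary weakening.

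The cleaner route is to exploit that $\grpd$ already sits inside the invertibles. If $(f,g)$ is invertible then so is $(f,g)(1,g^{-1}) = (f,1)$, using (IP2) and (IP6). Now let $(f',g')$ be the inverse of $(f,1)$. Computing $(f,1)(f',g')$ via \eqref{eq:zappa-szep_definition} and using (IP1), (IP5) gives $(ff',g')$; equating with the identity forces $g'$ to be an identity and $ff' = 1$. Then the other product $(f',1)(f,1) = (f'f,1)$ equals the identity as well, so $f'f = 1$. Thus $f$ is genuinely two-sided invertible in $\thomcat$, and the hypothesis on $\thomcat$ forces $f$ to be an identity --- no cancellativity needed.

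Your approach extracts only $f(g\cdot f') = 1$ from one side and $f'(g'\cdot f) = 1$ from the other; the second equation gives a left-inverse for $g'\cdot f$ rather than for $f$, which is why you are pushed toward Lemma~\ref{lem:cancellative_inverse}. Reducing first to the case $g = 1$ makes the two equations symmetric in $f$, and the two-sided inverse falls out for free.
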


\begin{proposition}
\label{prop:zappa-szep_conditions}
Let $\cat = \thomcat \bowtie \grpd$ where $\thomcat$ has no non-trivial invertibles and $\grpd$ is a discrete groupoid.
\begin{enumerate}
\item If $\thomcat$ is right-Ore and the action of $\thomcat$ on $\grpd$ is injective then $\cat$ is right-Ore.\label{item:product_ore}
\item If $\thomcat$ is strongly Noetherian then so is $\cat$.\label{item:product_noeth}
\item If $\rho$ is a height function on $\thomcat$ then it is a height function on $\cat$.\label{item:product_height}
\item If $\gars$ is a left-Garside family in $\thomcat$ then it is a left-Garside family in $\cat$.\label{item:product_left_gars}
\item If $\gars$ is a right-Garside family in $\thomcat$ then $\gars\grpd$ is a right-Garside family in $\cat$.\label{item:product_right_gars}
\end{enumerate}
\end{proposition}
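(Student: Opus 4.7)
For~(i), I would apply both directions of Lemma~\ref{lem:zappa-szep_cancellative}: right-cancellativity of $\cat$ follows because $\thomcat$ is right-cancellative (from the Ore hypothesis), $\grpd$ is a groupoid (hence cancellative), and the action of $\thomcat$ on $\grpd$ is injective by assumption. Left-cancellativity is symmetric, with the injectivity of the $\grpd$-action on $\thomcat$ automatic from $g^{-1}\cdot(g\cdot f)=f$, which follows from \eqref{item:zappa-szep_product_acts_left} and \eqref{item:zappa-szep_1_acts_left}. Common right-multiples come from Observation~\ref{lem:zappa-szep_common_multiples}. For~(iii), Observation~\ref{lem:zappa-szep_invertibles} identifies $\cat^\times=\grpd$; discreteness of $\grpd$ then forces any invertible $(1,g)\in\cat(a,b)$ to have $a=b$, while any non-invertible $(f,g)\in\cat(a,b)$ has $f\in\thomcat(a,b)$ non-trivial (again using discreteness to collapse the middle object), so $\rho(a)<\rho(b)$ by the height-function hypothesis on $\thomcat$.

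For~(ii), the witness $\delta$ of strong Noetherianity on $\thomcat$ need not be $\grpd$-invariant, so the first step is to pass to its orbit minimum $\delta'(f):=\min\{\delta(g\cdot f):g\in\grpd,\ g\cdot f\text{ defined}\}$. Invertibility of the $\grpd$-action keeps $\delta'\geq 1$ on non-identities, and subadditivity survives by expanding $k\cdot(fh)=(k\cdot f)(k^f\cdot h)$, applying subadditivity of $\delta$, the inequality $\inf_k[A(k)+B(k)]\geq\inf A+\inf B$, and the observation that $\{k^f:k\in\grpd\}$ is a subset of the $\grpd$-elements that can act on $h$. Setting $\hat\delta(f,g):=\delta'(f)$ on $\cat$ then witnesses strong Noetherianity of $\cat$.

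For~(iv) and~(v), the plan is to verify each Garside-family condition by the unique decomposition $\cat=\thomcat\cdot\grpd$. For~(iv) one has $\gars^\sharp_\cat=\grpd\cup\gars\grpd$; generation of $\cat$ is immediate from generation of $\thomcat$, and the $\gars$-head of $(f,g)\in\cat$ is the $\gars$-head $h$ of $f$ in $\thomcat$: any left-factor $(q_1,g_q)$ of $(f,g)$ in $\cat$ satisfies $f=q_1(g_q\cdot r_1)$ for its right-complement, so $q_1$ is a left-factor of $f$ and hence of $h$, and the $\grpd$-factor $g_q$ can be absorbed when writing $(q_1,g_q)$ as a left-factor of $(h,1)$. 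For~(v) the argument is dual: $(\gars\grpd)^\sharp_\cat=\grpd\cup\grpd\gars\grpd$, and the $\gars\grpd$-tail of $(f,g)$ is $(t,g)$ where $t$ is the $\gars$-tail of $f$ in $\thomcat$. The principal obstacle in both parts is closure of $\gars^\sharp$ under right- respectively left-factors: via unique decomposition this reduces to closure in $\thomcat$ provided $\gars$ is preserved by the $\grpd$-action, so establishing the $\grpd$-invariance of $\gars$ is the main technical compatibility that the indirect-product setup must deliver.
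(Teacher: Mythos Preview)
Your treatment of (i) and (iii) matches the paper's, with details spelled out.

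For (ii) you are more careful than the paper. The paper's one-line justification (``$f$ and $fg$ are right-multiples by invertibles of each other'') implicitly suggests setting $\hat\delta(f,g)=\delta(f)$; but subadditivity on a product $(f_1,g_1)(f_2,g_2)$ then needs $\delta(g_1\cdot f_2)\geq\delta(f_2)$, which is not given. Your orbit-minimum $\delta'$ is exactly the repair that makes this honest, and your verification is correct.

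For (iv) and (v) you have correctly located the real issue --- $\grpd$-invariance of $\gars$ --- but two remarks are in order. First, in (v) the obstacle is not left-factor closure of $(\gars\grpd)^\sharp_\cat=\grpd\cup\grpd\gars\grpd$: that holds without invariance, since a left-factor $(a,b)$ of an element with $\thomcat$-part $g_0\cdot s$ has $g_0^{-1}\cdot a\in\{1\}\cup\gars$ by closure in $\thomcat$, whence $a\in\{1\}\cup\grpd\cdot\gars$. The genuine obstacle in (v) is the tail property: a right-factor $(c,d)\in\grpd\gars\grpd$ of $(f,g)$ only yields $b\cdot c\in\grpd\cdot\gars$ as a right-factor of $f$, and without invariance this need not lie in $\gars$, so the $\gars$-tail of $f$ need not dominate it. Second, and more importantly: the indirect-product axioms do \emph{not} force $\grpd$-invariance of $\gars$, so you cannot hope to derive it from the setup. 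In the paper's applications one always has $\gars=\Div(\Delta)$ for the $\Delta$ of Proposition~\ref{prop:f_garside_map}, which is visibly invariant under the relevant groupoids, so the gap is harmless in practice. The paper's proof of (iv)--(v) glosses over this point entirely; you have identified a missing hypothesis in the proposition as stated, and should record it as such rather than as something still to be proved.
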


\begin{proof}
Property~\eqref{item:product_ore} follows from Lemma~\ref{lem:zappa-szep_cancellative} and Observation~\ref{lem:zappa-szep_common_multiples}.
Properties~\eqref{item:product_noeth} and~\eqref{item:product_left_gars} follow from the fact that for $f \in \thomcat$ and $g \in \grpd$ the morphisms $f$ and $fg$ are right-multiples by invertibles of each other. Property~\ref{item:product_height} follows from $\grpd$ being discrete (i.e.\ every morphism being an endomorphism). Toward \eqref{item:product_right_gars} it is clear that every right-factor of $\gars\grpd$ is contained in $\gars\grpd$. Moreover if $t$ is an $\gars$-tail for $f$ then $tg$ is a $\gars$-tail for $fg$.
\end{proof}


\section{Examples: categories constructed by indirect products}
\label{sec:indirect_product_examples}

In this section we show how the indirect product can be used to construct new groups. The basic examples are Thompson's groups $T$ and $V$ as well as the braided Thompson groups, which all arise as fundamental groups of categories of the form $\thomcat \bowtie \grpd$ where $\grpd$ is an appropriate groupoid. More generally, the groups studied in joint work with Zaremsky \cite{witzel16a} are essentially by definition groups that can be obtained in this form. Later we also describe other groups obtained via indirect products.

We will sometimes draw pictures to motivate our definition. In these pictures the up-direction always corresponds to left in our notation and down corresponds to right. This is especially relevant for group elements. For example, a permutation $X \from X, g(x)$ \reflectbox{$\mapsto$} $x$ will be depicted by connecting the point $x$ at the bottom to the point $g(x)$ at the top.

\subsection{Thompson's groups $\mathbf{T}$ and $\mathbf{V}$}
\label{sec:tcat_vcat}

In this section we introduce Thompson's groups $T$ and $V$ as fundamental groups of categories $\Tcat$ and $\Vcat$. The categories will be obtained from $\thomcat$ as indirect products with groupoids and we start by introducing these.

We define $\grpd_T$ and $\grpd_V$ to be groupoids whose objects are positive natural numbers with $\grpd_T(m,n) = \emptyset$ for $m \ne n$. We put $\grpd_T(n,n) = \Z/n\Z$ and $\grpd_V(n,n) = \Sym_n$. We want to define $\Tcat = \thomcat \bowtie \grpd_T$ and $\Vcat = \thomcat \bowtie \grpd_V$ and have to specify the actions that define these indirect products. That is, given a forest $f \in \thomcat(m,n)$ and a group element $g \in \grpd(m,m)$ we need to specify how the product $gf$ should be written as $(g \cdot f)g^f$ with $g \cdot f \in \thomcat(m,n)$ and $g^f \in \grpd(n,n)$ (for $\grpd$ one of $\grpd_T$ and $\grpd_V$).

\begin{figure}[htb]
\centering
\newcommand{\treea}[1]{
\begin{scope}[#1]
\node[end] (root) at (0,0) {};
\node[end] (left) at (-.5,.5) {};
\node[end] (right) at (.5,.5) {};
\draw (left) -- (root) -- (right);
\end{scope}
}
\begin{tikzpicture}[scale=1,yscale=-1]
\colorlet{darkgreen}{green!65!black}
\foreach \x in {0,1}
\draw (\x,1) -- (\x+1,0);
\draw[thick, darkgreen] (2,1) -- (2.5,.5)
	(-.5,.5) -- (0,0);
\treea{shift={(2,1)}}
\foreach \x in {0,1,2}
\node[end] at (\x,0) {};
\foreach \x in {0,1,2}
\node[end] at (\x,1) {};
\foreach \x in {0,1}
\draw (\x,1) -- (\x,1.5);
\foreach \x in {0,1}
\node[end] at (\x,1.5) {};
\node at (3,1) {=};
\begin{scope}[xshift=4cm]
\begin{scope}[xshift=0cm]
\node[end] (root) at (.5,.5) {};
\node[end] (left) at (0,1) {};
\node[end] (right) at (1,1) {};
\draw (left) -- (root) -- (right);
\end{scope}
\foreach \x in {2,3}
\draw (\x,.5) -- (\x,1);
\foreach \x in {2,3}
\node[end] at (\x,.5) {};
\begin{scope}[yshift=1cm]
\foreach \x in {0,1}
\draw (\x,1) -- (\x+2,0);
\foreach \x in {2,3}
\draw[thick, darkgreen] (\x,1) -- (3.5,1-1/2*3.5+1/2*\x)
	(-.5,1-1/2*3.5+1/2*\x) -- (\x-2,0);
\foreach \x in {0,1,2,3}
\node[end] at (\x,0) {};
\foreach \x in {0,1,2,3}
\node[end] at (\x,1) {};
\end{scope}
\end{scope}
\end{tikzpicture}
\caption{Defining $\thomcat \bowtie \grpd_T$. The picture shows how to write $gf$ as $(g \cdot f)g^f$ in the case where $f$ is the caret $\lambda^3_3 \in \thomcat(3,4)$ and $g = 1 + \Z/3\Z \in \grpd_T(3,3)$. The colored strand gets doubled under the action of $f$. As a result $g \cdot f = \lambda^3_1 \in \thomcat(3,4)$ and $g^f = 2 + \Z/4\Z \in \grpd_T(4,4)$.}
\label{fig:t_zappa_szep}
\end{figure}

Since $\grpd_T$ is contained in $\grpd_V$, it would suffice to only define the actions for $\grpd_V$, but we look at the simpler case of $\grpd_T$ first.

We need to rewrite a cyclic permutation followed by a tree as a tree followed by a cyclic permutation. This is illustrated in Figure~\ref{fig:t_zappa_szep}. For $f \in \thomcat(m,n)$ and $g = \ell + \Z/m\Z \in \grpd_T(m,m)$ the forest $g \cdot f$ is just $f$ with the trees rotated by $\ell$ to the right. The definition of $g^f$ is more subtle: looking at the figure we see that we have to define it to be $k + \Z/n\Z$ where $k$ is the number of leaves of the first $\ell$ trees of $g \cdot f$, or equivalently, to be the number leaves of the last $\ell$ trees of $f$. Note that this number does not depend on the chosen representative $\ell$: if we replace $\ell$ by $\ell + m$, instead of $k$ we get $k+n$, because we counted every leaf once more. If $k_\ell$ denotes the number of leaves of the last $\ell$ trees of $f$, the sequence $(k_\ell)_{0 \le \ell < m}$ is strictly increasing. This shows:

\begin{observation}
\label{obs:tcat_injective}
The action of $\thomcat$ on $\grpd_T$ is injective.
\end{observation}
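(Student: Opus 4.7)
The plan is to unwind the definition: fix $f \in \thomcat(m,n)$ and show that the map $\grpd_T(m,m) \to \grpd_T(n,n)$, $g \mapsto g^f$ is injective. Using the identifications $\grpd_T(m,m) = \Z/m\Z$ and $\grpd_T(n,n) = \Z/n\Z$, and the description given just before the observation, this is exactly the map sending $\ell + \Z/m\Z$ to $k_\ell + \Z/n\Z$, where $k_\ell$ denotes the number of leaves of the last $\ell$ trees of $f$.

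First I would verify that this map is well-defined on $\Z/m\Z$: if we replace $\ell$ by $\ell + m$, then $k_{\ell+m} = k_\ell + n$, since the last $\ell+m$ trees of $f$ are all of the trees of $f$ together with the last $\ell$ trees counted once more; so the residue class modulo $n$ is unchanged. This is exactly the compatibility noted in the paragraph above the observation.

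Then I would invoke the already observed fact that the sequence $(k_\ell)_{0 \le \ell \le m}$ is strictly increasing: we have $k_0 = 0$ and $k_m = n$, and $k_{\ell+1} - k_\ell$ is the (positive) number of leaves of the $(m-\ell)$-th tree of $f$. Consequently $k_0, k_1, \ldots, k_{m-1}$ are pairwise distinct integers in $\{0, 1, \ldots, n-1\}$, hence pairwise distinct modulo $n$. Therefore if $g_1 = \ell_1 + \Z/m\Z$ and $g_2 = \ell_2 + \Z/m\Z$ with representatives in $\{0, \ldots, m-1\}$ satisfy $g_1^f = g_2^f$, then $k_{\ell_1} \equiv k_{\ell_2} \pmod n$, forcing $\ell_1 = \ell_2$ and hence $g_1 = g_2$.

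There is no real obstacle here: the entire content of the observation is already packaged into the strict monotonicity of $(k_\ell)_\ell$, which in turn follows from every tree in $f$ having at least one leaf. The proof is essentially a one-line deduction from the description of the action combined with this monotonicity.
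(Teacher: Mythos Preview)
Your proposal is correct and is exactly the argument the paper has in mind: the paper packages the entire proof into the sentence immediately preceding the observation, namely that the sequence $(k_\ell)_{0 \le \ell < m}$ is strictly increasing, and you have simply unpacked that sentence carefully. There is nothing more to it.
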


\begin{lemma}
The actions of $\thomcat$ and $\grpd_T$ on each other are indirect product actions.
\end{lemma}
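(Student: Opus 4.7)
The plan is to verify the eight axioms \eqref{item:zappa-szep_1_acts_left}--\eqref{item:zappa-szep_product_is_acted_right} directly from the combinatorial description just given. For $g = \ell + \Z/m\Z \in \grpd_T(m,m)$ and $f \in \thomcat(m,n)$ with trees $T_1, \dots, T_m$ having $n_1, \dots, n_m$ leaves respectively, the two actions are $g \cdot f = (T_{m-\ell+1}, \dots, T_m, T_1, \dots, T_{m-\ell})$ and $g^f = (n_{m-\ell+1} + \dots + n_m) + \Z/n\Z$.

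The unit axioms \eqref{item:zappa-szep_1_acts_left}, \eqref{item:zappa-szep_1_acts_right}, \eqref{item:zappa-szep_1_is_acted_left}, and \eqref{item:zappa-szep_1_is_acted_right} are immediate: a rotation by $\ell=0$ is the identity; the identity forest $1_y$ has $n_i = 1$ for every $i$, so the sum defining $g^{1_y}$ reduces to $\ell$; the sum is empty when $\ell = 0$, giving $1_x^f = 0$; and rotating the trivial forest yields the trivial forest. The left-action property \eqref{item:zappa-szep_product_acts_left} follows because composition of rotations is addition in $\Z/m\Z$.

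The right-action property \eqref{item:zappa-szep_product_acts_right} and the left-cocycle \eqref{item:zappa-szep_product_is_acted_left} both rest on additivity of leaf counts. In $f_1 f_2$, each of the $m$ top-trees is a tree of $f_1$ with subforests of $f_2$ glued onto its leaves, so the leaves of the last $\ell$ top-trees of $f_1 f_2$ are exactly the leaves of $f_2$ lying below the last $\ell$ trees of $f_1$; the latter are the last $g^{f_1}$ roots of $f_2$, which gives \eqref{item:zappa-szep_product_acts_right}. The left-cocycle follows by partitioning the last $\ell_1 + \ell_2$ trees of $f$ into the last $\ell_2$ plus the $\ell_1$ trees immediately preceding, after observing that the latter block becomes the last $\ell_1$ trees of $g_2 \cdot f$; summing the two leaf counts gives the required equality in $\Z/n\Z$.

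The step requiring the most care is the right-cocycle \eqref{item:zappa-szep_product_is_acted_right}, namely $g \cdot (f_1 f_2) = (g \cdot f_1)(g^{f_1} \cdot f_2)$. Here I will argue that rotating the $m$ compound top-trees of $f_1 f_2$ by $\ell$ is the same operation as rotating the trees of $f_1$ by $\ell$ together with rotating the trees of $f_2$ by the corresponding leaf shift $k_1 \defeq g^{f_1}$. The key observation is that each tree $T_i$ of $f_1$ owns a contiguous block of $n_i$ leaves of $f_1$, and cyclically shifting the trees of $f_1$ by $\ell$ shifts this block by exactly $k_1$ positions; consequently the subforests of $f_2$ attached to those leaves must be shifted by $k_1$ in order to reproduce the rotated $f_1 f_2$. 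I expect a small diagram tracking the displacement of one compound tree to make this transparent, at which point all eight axioms are verified and the lemma is established.
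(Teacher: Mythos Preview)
Your proposal is correct and follows the same approach as the paper's proof, which simply declares \eqref{item:zappa-szep_1_acts_left}--\eqref{item:zappa-szep_1_is_acted_right} clear, gives for \eqref{item:zappa-szep_product_is_acted_left} precisely your partitioning argument (the last $\ell_1+\ell_2$ trees of $f$ are the last $\ell_2$ trees of $f$ together with the last $\ell_1$ trees of $g_2\cdot f$), and dispatches \eqref{item:zappa-szep_product_is_acted_right} by saying it ``can be verified drawing a picture.'' Your write-up is more detailed than the paper's---in particular you spell out \eqref{item:zappa-szep_product_acts_right} and give a prose justification for \eqref{item:zappa-szep_product_is_acted_right}---but the underlying reasoning is identical.
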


\begin{proof}
Conditions \eqref{item:zappa-szep_1_acts_left}, \eqref{item:zappa-szep_1_acts_right}, \eqref{item:zappa-szep_product_acts_left}, \eqref{item:zappa-szep_product_acts_right}, \eqref{item:zappa-szep_1_is_acted_left}, \eqref{item:zappa-szep_1_is_acted_right} are clear.

The condition \eqref{item:zappa-szep_product_is_acted_left} in our setting follows from the fact that the last $k+\ell$ trees of $f$ are the last $\ell$ trees of $f$ plus the last $k$ trees of $(\ell + m\Z) \cdot f$. Condition \eqref{item:zappa-szep_product_is_acted_right} can be verified drawing a picture.
\end{proof}

The lemma allows us to define $\Tcat = \thomcat \bowtie \grpd_T$. Combining Observation~\ref{obs:tcat_injective} with Propositions~\ref{prop:thomcat_ore} and~Corollary~\ref{cor:f_garside_family}  and applying Proposition~\ref{prop:zappa-szep_conditions} we find:

\begin{corollary}
\label{cor:tcat_righ_ore}
The category $\Tcat$ is right-Ore and admits a height function and a left-Garside family $\gars$ that is closed under factors such that $\gars(x,-)/\gars^\times$ is finite for every $x$.
\end{corollary}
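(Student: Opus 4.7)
The plan is to recognize $\Tcat$ as an instance of the general setup of Proposition~\ref{prop:zappa-szep_conditions} and simply invoke that proposition. The standing hypotheses are easy to check: an invertible forest in $\thomcat(m,n)$ must have $m = n$ and consist of $n$ trivial trees, hence $\thomcat^\times$ contains only identities; and $\grpd_T$ is a discrete groupoid by its very definition, since $\grpd_T(m,n) = \emptyset$ for $m \ne n$. Moreover, the injectivity of the action of $\thomcat$ on $\grpd_T$ was just recorded in Observation~\ref{obs:tcat_injective}.

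With these in hand, the three main assertions are deduced in order. First, $\thomcat$ is right-Ore by Proposition~\ref{prop:thomcat_ore}, so part~\eqref{item:product_ore} of Proposition~\ref{prop:zappa-szep_conditions} gives that $\Tcat$ is right-Ore. Second, the identity $\rho = \id_\N$ is a height function on $\thomcat$ by Proposition~\ref{prop:thomcat_ore}, and part~\eqref{item:product_height} transports it to a height function on $\Tcat$. Third, setting $\gars \defeq \Div(\Delta)$, which by Corollary~\ref{cor:f_garside_family} is a left-Garside family in $\thomcat$ closed under factors with $\gars(x,-)$ finite for every $x$, part~\eqref{item:product_left_gars} yields that $\gars$ remains a left-Garside family in $\Tcat$.

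It remains to transfer the two quantitative properties of $\gars$ from $\thomcat$ to $\Tcat$. The key observation is Observation~\ref{lem:zappa-szep_invertibles}, which identifies $\Tcat^\times = \grpd_T$. For local finiteness, any element of $\gars(x,-)$ in $\Tcat$ is of the form $(s,g)$ with $s \in \gars \subseteq \thomcat$ and $g \in \grpd_T$; since $g \in \Tcat^\times$, post-composition by $\Tcat^\times$ reduces $(s,g)$ to $(s,1)$, and the remaining parameter ranges over the finite set $\gars(x,-) \subseteq \thomcat$. For closure under factors, any factorization of an element of $\gars$ in $\Tcat$ can, using the multiplication rule \eqref{eq:zappa-szep_definition} together with the fact that every morphism of $\Tcat$ is a $\thomcat$-morphism followed by an invertible $\grpd_T$-morphism, be rewritten so that the middle factor differs by elements of $\Tcat^\times = \grpd_T$ from a factor in $\thomcat$ of the forest part; closure of $\gars$ under factors in $\thomcat$ then finishes the argument.

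The only step requiring any care is the final bookkeeping: the rest is a direct application of Proposition~\ref{prop:zappa-szep_conditions}. This bookkeeping is entirely routine because $\grpd_T$ consists solely of invertibles, so passing between $\Tcat$-factors and $\thomcat$-factors is mediated by $\Tcat^\times$ and preserves membership in $\gars$.
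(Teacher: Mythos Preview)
Your proof is correct and follows exactly the paper's one-line approach (combining Observation~\ref{obs:tcat_injective}, Proposition~\ref{prop:thomcat_ore}, Corollary~\ref{cor:f_garside_family}, and Proposition~\ref{prop:zappa-szep_conditions}), merely unpacking it in more detail. One minor slip: since $\gars = \Div(\Delta) \subseteq \thomcat$, elements of $\gars(x,-)$ are already forests, not general pairs $(s,g)$ --- your local-finiteness bookkeeping is really about $\gars^\sharp_{\Tcat} = \gars\grpd_T$, but the conclusion is unaffected.
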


The fundamental group $\pi_1(\Tcat,1)$ is \emph{Thompson's group $T$}.

Now we want to define the actions of $\thomcat$ and $\grpd_V$ on each other. So let $f \in \thomcat(m,n)$ and let $g \in \grpd_V(m,m)$. The action of $\grpd_V$ on $\thomcat$ is again as expected: the forest $f' = (g \cdot f)$ is given by the relationship that the $g(j)$th tree of $f'$ is the $j$th tree of $f$. The permutation $g' = g^f \in \grpd_V(n,n)$ has the following description. Identify $\{1,\ldots,n\}$ with the leaves of $f$ and with the leaves of $(g \cdot f)$. If $i$ is the $k$the leaf of the $j$th tree of $f$ then $g'(i)$ is the $k$th leaf of the $g(j)$th tree of $g \cdot f$, see Figure~\ref{fig:t_zappa_szep}.

At this point it becomes clear that working the actions as described above is virtually impossible. To obtain a more explicit algebraic description, we make use of the presentation of $\thomcat$. Property~\eqref{item:zappa-szep_product_acts_right} tells us that we know how any element of $\thomcat$ acts as soon as we know how the generators act and property~\eqref{item:zappa-szep_product_is_acted_right} tells us that we know how $\grpd_V$ acts on any element once we know how it acts on the generators of $\thomcat$. It therefore suffices to specify both actions for generators of $\thomcat$. Checking well-definedness then means to check various conditions coming from the relations in $\thomcat$.

So now we consider $g \in \grpd_V(m,m)$ and $\lambda_i^m \in \thomcat(m,m+1)$ and define the actions on each other. We start again with the easy case:
\begin{equation}
\label{eq:vcat_on_thomcat}
g \cdot \lambda_i = \lambda_{g(i)}\text{.}
\end{equation}
Working out $g^{\lambda_i}$ we have to distinguish four cases depending on the position of a point relative to $i$ and relative to $g(i)$:
\begin{equation}
\label{eq:thomcat_on_vcat}
g^{\lambda_i}(j) = \left\{
\begin{array}{ll}
g(j) & j \le i, g(j) \le g(i),\\
g(j - 1) & j > i, g(j-1) \le g(i),\\
g(j) + 1 & j \le i, g(j) > g(i),\\
g(j - 1) + 1 & j > i, g(j-1) > g(i)\text{.}
\end{array}
\right.
\end{equation}
Since $i = j$ if and only if $g(i) = g(j)$, the inequalities in the second and third case can be taken to be strict.

\begin{lemma}
\label{lem:thomcat_vcat_actions}
The formulas \eqref{eq:vcat_on_thomcat} and \eqref{eq:thomcat_on_vcat} define well-defined indirect product actions of $\thomcat$ and $\grpd_V$ on each other.
\end{lemma}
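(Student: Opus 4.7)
The plan is to regard the actions as prescribed on the generators $\lambda_i^n$ of $\thomcat$, and to extend them to all of $\thomcat$ by the cocycle rules \eqref{item:zappa-szep_product_acts_right} and \eqref{item:zappa-szep_product_is_acted_right}. By Proposition~\ref{prop:thomcat_presentation}, well-definedness then reduces to checking that this extension is consistent with the single family of relations \eqref{eq:caret_relation}.

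First I would verify consistency for the left action of $\grpd_V$ on $\thomcat$. For $j<i$ and $g\in\grpd_V(n,n)$, condition \eqref{item:zappa-szep_product_is_acted_right} forces
\begin{align*}
g\cdot(\lambda^n_i\lambda^{n+1}_j)&=(g\cdot\lambda^n_i)(g^{\lambda^n_i}\cdot\lambda^{n+1}_j)=\lambda^n_{g(i)}\lambda^{n+1}_{g^{\lambda^n_i}(j)},\\
g\cdot(\lambda^n_j\lambda^{n+1}_{i+1})&=(g\cdot\lambda^n_j)(g^{\lambda^n_j}\cdot\lambda^{n+1}_{i+1})=\lambda^n_{g(j)}\lambda^{n+1}_{g^{\lambda^n_j}(i+1)}.
\end{align*}
These should coincide in $\thomcat(n,n+2)$, possibly after one application of \eqref{eq:caret_relation}. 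One splits into the cases $g(j)<g(i)$ and $g(j)>g(i)$; in each case, feeding the relevant inequalities into the four clauses of \eqref{eq:thomcat_on_vcat} identifies the subscripts $g^{\lambda^n_i}(j)$ and $g^{\lambda^n_j}(i+1)$ with precisely the values needed for the two words to match via \eqref{eq:caret_relation}.

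Next I would verify consistency for the right action of $\thomcat$ on $\grpd_V$, i.e.\ that $(g^{\lambda^n_i})^{\lambda^{n+1}_j}=(g^{\lambda^n_j})^{\lambda^{n+1}_{i+1}}$ as permutations in $\grpd_V(n+2,n+2)$. This reduces to evaluating both sides at an arbitrary input $k\in\{1,\dots,n+2\}$ by a case analysis on where $k$ sits relative to $j,j+1,i+1,i+2$ and where its tracked image sits relative to $g(j),g(i),g(i)+1$: each side is a two-fold composition of clauses from \eqref{eq:thomcat_on_vcat}, and a clause-by-clause comparison matches them. The remaining axioms are then easy: \eqref{item:zappa-szep_1_acts_left}, \eqref{item:zappa-szep_1_acts_right}, \eqref{item:zappa-szep_1_is_acted_left} and \eqref{item:zappa-szep_1_is_acted_right} are immediate; \eqref{item:zappa-szep_product_acts_left} holds on the generator $\lambda_i$ because $\lambda_{g_1g_2(i)}=g_1\cdot\lambda_{g_2(i)}$ and then extends by induction on forest length; \eqref{item:zappa-szep_product_is_acted_left} reduces in the same way to the single identity $(g_1g_2)^{\lambda_i}=g_1^{g_2\cdot\lambda_i}g_2^{\lambda_i}$, a direct calculation from \eqref{eq:thomcat_on_vcat}; and \eqref{item:zappa-szep_product_acts_right}, \eqref{item:zappa-szep_product_is_acted_right} are built into the extension.

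The main obstacle is the bookkeeping in the second step: the permutation identity involves simultaneously tracking several inequalities through two layers of the four-case formula \eqref{eq:thomcat_on_vcat}, and the number of raw subcases is uncomfortably large. The cleanest way to tame this is to interpret the formulas geometrically along the lines motivating \eqref{eq:vcat_on_thomcat} and \eqref{eq:thomcat_on_vcat}: visualize $g$ as a wiring of leaves, $\lambda_i$ as doubling the $i$-th leaf, and read both sides of \eqref{eq:caret_relation} as the same stacked picture. The bookkeeping then collapses to the observation that doubling leaf $i$ and later leaf $j$ (with $j<i$) commutes with doubling leaf $j$ and later leaf $i+1$, regardless of the ambient permutation, and the algebraic case analysis becomes a verification that \eqref{eq:thomcat_on_vcat} correctly encodes this geometric commutation.
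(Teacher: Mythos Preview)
Your proposal is correct and follows essentially the same approach as the paper: extend the actions from generators via \eqref{item:zappa-szep_product_acts_right} and \eqref{item:zappa-szep_product_is_acted_right}, reduce well-definedness to the single family of relations \eqref{eq:caret_relation}, and note that the remaining axioms are immediate or straightforward on generators. The paper likewise identifies the two consistency checks you write out, calls them ``not difficult but tedious,'' and defers the detailed case analysis to \cite[Example~2.9]{witzel16a}; your suggestion to interpret the bookkeeping geometrically is a helpful way to organize what the paper simply skips.
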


\begin{proof}
The conditions that involve only the action of $\grpd_V$, namely \eqref{item:zappa-szep_1_acts_left}, \eqref{item:zappa-szep_product_acts_left}, and \eqref{item:zappa-szep_1_is_acted_right} are clear. Condition \eqref{item:zappa-szep_1_acts_right} is defined to hold. Verifying conditions \eqref{item:zappa-szep_1_is_acted_left}, \eqref{item:zappa-szep_product_is_acted_left} on the $\lambda_i$ is straightforward, although in the second case tedious.

Conditions \eqref{item:zappa-szep_product_acts_right} and \eqref{item:zappa-szep_product_is_acted_right} should also be defined to hold, but in order for this to be well-defined, we need to check them on relations. That is, we need to verify that
\begin{align*}
(g^{\lambda_i})^{\lambda_j} = g^{\lambda_i \lambda_j} &= g^{\lambda_j\lambda_{i+1}} = (g^{\lambda_j})^{\lambda_{i+1}}\mathrlap{\quad\text{and}}\\
(g \cdot \lambda_i)(g_2^{\lambda_i} \cdot \lambda_j) = g \cdot (\lambda_i \lambda_j) &= g \cdot (\lambda_j \lambda_{i+1}) = (g \cdot \lambda_j)(g_2^{\lambda_j} \cdot \lambda_{i+1})
\end{align*}
for $j < i$. These are again not difficult but tedious and we skip them here. See \cite[Example~2.9]{witzel16a} for a detailed verification.
\end{proof}

Thus we can define $\Vcat = \thomcat \bowtie \grpd_V$.

\begin{lemma}
\label{lem:vcat_injective}
The action of $\thomcat$ on $\grpd_V$ defined by \eqref{eq:thomcat_on_vcat} is injective.
\end{lemma}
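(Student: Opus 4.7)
The strategy is to reduce to the action by the generators of $\thomcat$ and then invert \eqref{eq:thomcat_on_vcat} explicitly on each generator. First I would invoke the cocycle identity \eqref{item:zappa-szep_product_acts_right}, $g^{f_1 f_2} = (g^{f_1})^{f_2}$, which shows that if the actions of each of $f_1$ and $f_2$ are injective then so is the action of the composite. By Proposition~\ref{prop:thomcat_presentation}, every morphism of $\thomcat$ is a composition of generators $\lambda_i^m$, so it suffices to check that $g \mapsto g^{\lambda_i^m}$ is injective for every fixed $m \ge 1$ and $1 \le i \le m$.

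For a fixed generator $\lambda_i^m$, I would exhibit an explicit inverse. Setting $h = g^{\lambda_i^m}$, the first case of \eqref{eq:thomcat_on_vcat} with $j = i$ yields $h(i) = g(i)$, so $g(i)$ is already determined by $h$ (and $i$). For each $k \in \{1, \ldots, m\} \smallsetminus \{i\}$ one takes $j = k$ if $k < i$ and $j = k+1$ if $k > i$, and inspects \eqref{eq:thomcat_on_vcat}: in each case the value $h(j)$ equals either $g(k)$ or $g(k) + 1$, and the two alternatives are distinguished by whether $h(j) < h(i)$ or $h(j) > h(i)$. Hence $g(k)$ is recovered as $h(j)$ or $h(j) - 1$ accordingly, giving a formula for $g$ purely in terms of $h$ and $i$ and proving injectivity.

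I do not foresee a serious obstacle; the only delicate point is the behaviour at the boundary indices $j = i$ and $j = i+1$, where $g(j) = g(i)$ or $g(j-1) = g(i)$ and so the dichotomy in \eqref{eq:thomcat_on_vcat} is not automatically strict (this is what the remark preceding the lemma about strict inequalities is meant to address). However, the reconstruction first pins down $g(i) = h(i)$ and then only compares other values $h(j)$ to $h(i)$, so all comparisons used in the inversion are strict and no boundary ambiguity enters.
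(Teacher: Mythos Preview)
Your proposal is correct and follows essentially the same approach as the paper's proof: reduce to the generators $\lambda_i$ via the cocycle identity and then exhibit an explicit left inverse to $g \mapsto g^{\lambda_i}$. The paper packages your case distinction into two auxiliary index maps $\tau_i$ (skipping $i+1$) and $\pi_i$ (collapsing above), writing the recovery as a single formula $g(j) = \pi_i\bigl(g^{\lambda_i}(\tau_i(j))\bigr)$, which is exactly your reconstruction expressed compactly.
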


\begin{proof}
Since by definition $g^{\lambda_{i_1} \cdots \lambda_{i_n}} = (\ldots(g^{\lambda_{i_1}})\ldots)^{\lambda_{i_n}}$ we only need to check that the map $g \mapsto g^{\lambda_i}$ defined in \eqref{eq:vcat_on_thomcat} is injective. But $g$ can be recovered from $g^{\lambda_i}$ as follows. Let $\tau_i, \pi_i \colon \N \to \N$ be given by
\begin{align*}
\tau_i(j) &\defeq \left\{\begin{array}{ll}
j & j \le i\\
j+1 & j > i
\end{array}
\right.
&\pi_i(j) &\defeq\left\{\begin{array}{ll}
j & j \le i\\
j-1 & j > i
\end{array}
\right.
\end{align*}
Then $g(j) = \pi_i(g^{\lambda_i}(\tau_i(j)))$.
\end{proof}

Proposition~\ref{prop:thomcat_ore}, Corollary~\ref{cor:f_garside_family} and Proposition~\ref{prop:zappa-szep_conditions} now imply:

\begin{corollary}
\label{cor:vcat_righ_ore}
The category $\Vcat$ is right-Ore and admits a height function and a left-Garside family $\gars$ that is closed under factors such that $\gars(x,-)/\gars^\times$ is finite for every $x$.
\end{corollary}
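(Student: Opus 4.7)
The plan is to apply Proposition~\ref{prop:zappa-szep_conditions} to $\Vcat = \thomcat \bowtie \grpd_V$, mirroring exactly the derivation of Corollary~\ref{cor:tcat_righ_ore}. The argument has essentially already been written down for $\Tcat$; the only input that differs is the injectivity of the action of $\thomcat$ on $\grpd_V$, where Lemma~\ref{lem:vcat_injective} now plays the role that Observation~\ref{obs:tcat_injective} played in the $\Tcat$ case.

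Concretely, I will first record the easy observations: $\thomcat$ has no non-trivial invertible morphisms (a forest with strictly more leaves than roots cannot be inverted), and $\grpd_V$ is discrete by construction. Combined with Proposition~\ref{prop:thomcat_ore} (giving that $\thomcat$ is right-Ore, with identity map $\rho \colon \N \to \N$ serving as height function) and Lemma~\ref{lem:vcat_injective} (giving injectivity of the $\thomcat$-action on $\grpd_V$), parts~(i) and~(iii) of Proposition~\ref{prop:zappa-szep_conditions} immediately deliver that $\Vcat$ is right-Ore and that $\rho$ remains a height function on $\Vcat$. Corollary~\ref{cor:f_garside_family} supplies a left-Garside family $\gars = \Div(\Delta)$ in $\thomcat$ that is closed under factors and with $\gars(x,-)$ finite for every $x$; part~(iv) of Proposition~\ref{prop:zappa-szep_conditions} then promotes $\gars$ to a left-Garside family in $\Vcat$.

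I expect no substantive obstacle. The only bookkeeping concerns the properties of $\gars$ phrased in the larger category: closure under factors is a statement about $\gars^\sharp$, and local finiteness involves $\Vcat^\times$ rather than $\thomcat^\times$. By Observation~\ref{lem:zappa-szep_invertibles} one has $\Vcat^\times = \grpd_V$, so $\gars^\sharp = \grpd_V \cup \grpd_V \gars \grpd_V$ inside $\Vcat$. Closure under factors then reduces to closure under factors in $\thomcat$ after rewriting via the indirect-product rules \eqref{eq:vcat_on_thomcat}, \eqref{eq:thomcat_on_vcat}, using that the $\grpd_V$-action permutes the trees of an elementary forest among themselves and hence preserves $\gars$. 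Local finiteness is immediate: $\gars(x,-)$ is the same finite set as in $\thomcat$, and the quotient by the larger group of invertibles $\grpd_V$ can only make it smaller.
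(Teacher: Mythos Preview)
Your proposal is correct and follows essentially the same route as the paper, which simply states that Proposition~\ref{prop:thomcat_ore}, Corollary~\ref{cor:f_garside_family} and Proposition~\ref{prop:zappa-szep_conditions} (together with the just-established Lemma~\ref{lem:vcat_injective}) imply the corollary. Your extra bookkeeping about closure under factors and local finiteness of $\gars$ in the larger category $\Vcat$ is a welcome expansion of what the paper leaves implicit.
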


The fundamental group $\pi_1(\Vcat,1)$ is \emph{Thompson's group $V$}.

\subsection{The braided Thompson groups}

The group $\mathit{BV}$, called \emph{braided $V$}, was introduced independently by Brin \cite{brin07} and Dehornoy \cite{dehornoy06}. We describe it using our framework which is similar to Brin's approach.

To define the categories underlying the braided Thompson groups, we define the groupoid $\grpd_{\mathit{BV}}$ to have as objects natural numbers, and to have morphisms $\grpd_{\mathit{BV}}(m,n) = \emptyset$ for $m \ne n$, and $\grpd_{\mathit{BV}}(n,n) = \Braid_n$. Note that the morphisms $\pi \colon \Braid_n \to \Sym_n$ define a morphism $\grpd_{\mathit{BV}} \to \grpd_V$ that we denote by $\pi$ as well. We want to define a indirect product $\thomcat \bowtie \grpd_{\mathit{BV}}$ and need to define actions of $\thomcat$ and $\grpd_{\mathit{BV}}$ on each other. Our guiding picture is Figure~\ref{fig:braided_v}.

\begin{figure}[htb]
\centering
\begin{tikzpicture}[scale=1,yscale=-1]
\begin{scope}
\draw (0,0) -- (0,1);
\draw (3,0) -- (3,1);
\draw
   (2,0) to [out=90, in=-90, looseness=1] (1,1);
\draw[white, line width=4pt]
   (1,0) to [out=90, in=-90, looseness=1] (2,1);
\draw
   (1,0) to [out=90, in=-90, looseness=1] (2,1);
\foreach \x in {0,1,2,3}
\node[end] at (\x,0) {};
\foreach \x in {0,1,2,3}
\node[end] at (\x,1) {};
\node[end] (root) at (1,1) {};
\node[end] (a) at (.5,1.5) {};
\node[end] (b) at (1.5,1.5) {};
\draw (a) -- (root) -- (b);
\foreach \x in {0,2,3}
\draw (\x,1.5) -- (\x,1);
\foreach \x in {0,2,3}
\node[end] at (\x,1.5) {};
\end{scope}
\node at (4,.75) {$=$};
\begin{scope}[shift={(5,0)}]
\draw (0,0) -- (0,2);
\draw (4,0) -- (4,2);
\draw
   (3,0) -- (3,1) to [out=90, in=-90, looseness=1] (2,2)
   (2,0) to [out=90, in=-90, looseness=1] (1,1) -- (1,2);
\draw[white, line width=4pt]
   (1,0) to [out=90, in=-90, looseness=1] (2,1) to [out=90, in=-90, looseness=1] (3,2);
\draw
   (1,0) to [out=90, in=-90, looseness=1] (2,1) to [out=90, in=-90, looseness=1] (3,2);
\foreach \x in {0,1,2,3,4}
\node[end] at (\x,0) {};
\foreach \x in {0,1,2,3,4}
\node[end] at (\x,2) {};
\node[end] (root) at (2.5,-.5) {};
\node[end] (a) at (2,0) {};
\node[end] (b) at (3,0) {};
\draw (a) -- (root) -- (b);
\foreach \x in {0,1,4}
\draw (\x,-.5) -- (\x,0);
\foreach \x in {0,1,4}
\node[end] at (\x,-.5) {};
\end{scope}
\end{tikzpicture}
\caption{Defining $\thomcat \bowtie \grpd_{\mathit{BV}}$.}
\label{fig:braided_v}
\end{figure}

We define the action of $\grpd_{\mathit{BV}}$ on $\thomcat$ simply as the action of $\grpd_V$ composed with $\pi$. In particular, $\sigma_i \cdot \lambda_i = \lambda_{i+1}$, $\sigma_i \cdot \lambda_{i+1} = \lambda_i$ and $\sigma_i \cdot \lambda_j = \lambda_j$ for $j \ne i,i+1$. The action of $\thomcat$ on $\grpd_{\mathit{BV}}$ we only define for generators acting on generators by
\[
\sigma_i^{\lambda_j} \defeq \left\{
\begin{array}{ll}
\sigma_{i+1} & j <i\\
\sigma_i \sigma_{i+1} & j = i\\
\sigma_{i+1} \sigma_i & j = i+1\\
\sigma_i & j > i+1\text{.}
\end{array}
\right.
\]

\begin{lemma}
\label{lem:bv_well_defined}
The formulas above define well-defined indirect product actions of $\thomcat$ and $\grpd_{\mathit{BV}}$ on each other.
\end{lemma}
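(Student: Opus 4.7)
The plan is to leverage Lemma~\ref{lem:thomcat_vcat_actions} as much as possible and then to verify only what is genuinely new. Since the left action of $\grpd_{\mathit{BV}}$ on $\thomcat$ is by construction the composition of $\pi\colon\grpd_{\mathit{BV}}\to\grpd_V$ with the already-verified left action from Lemma~\ref{lem:thomcat_vcat_actions}, the axioms that concern only this action — namely \eqref{item:zappa-szep_1_acts_left}, \eqref{item:zappa-szep_product_acts_left}, \eqref{item:zappa-szep_1_is_acted_left}, \eqref{item:zappa-szep_1_is_acted_right}, and \eqref{item:zappa-szep_product_is_acted_right} — are inherited immediately. Axiom \eqref{item:zappa-szep_1_acts_right} holds by the convention $g^{1_y}=g$, which serves as the base of the extensions carried out below.

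The content lies in \eqref{item:zappa-szep_product_acts_right} and \eqref{item:zappa-szep_product_is_acted_left}, which simultaneously play the role of extending the right action from the generator-on-generator formula for $\sigma_i^{\lambda_j}$ to arbitrary morphisms. Using \eqref{item:zappa-szep_product_is_acted_left}, I would first define $g^{\lambda_j}$ for every $g\in\Braid_n$ by induction on the length of a word in the $\sigma_i^{\pm 1}$ representing $g$; this is legitimate because for any $h\in\Braid_n$ the morphism $h\cdot\lambda_j=\lambda_{\pi(h)(j)}$ is still a single caret, so the recursion stays within terms that are already defined. With $g^{\lambda_j}$ available for every $g$ and every caret, I would then use \eqref{item:zappa-szep_product_acts_right} to extend $g^f$ to arbitrary $f\in\thomcat(n,-)$ by induction on the length of $f$ as a word in the $\lambda_i$.

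The main work is to show that these two extensions are independent of the chosen words, or equivalently, that they respect the defining relations of $\Braid_n$ and of $\thomcat$. Unpacking via \eqref{item:zappa-szep_product_is_acted_left}, the well-definedness of the first extension reduces to the two identities
\[
(\sigma_i\sigma_k)^{\lambda_j}=(\sigma_k\sigma_i)^{\lambda_j}\ \text{for}\ \abs{i-k}\geq 2 \quad\text{and}\quad (\sigma_i\sigma_{i+1}\sigma_i)^{\lambda_j}=(\sigma_{i+1}\sigma_i\sigma_{i+1})^{\lambda_j}
\]
in $\Braid_{n+1}$; each is verified by a finite partition on the relative position of $j$ with respect to $i$ and $k$. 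For the second extension, using Proposition~\ref{prop:thomcat_presentation}, one must verify the identity $(g^{\lambda^n_i})^{\lambda^{n+1}_k}=(g^{\lambda^n_k})^{\lambda^{n+1}_{i+1}}$ for $k<i$ and generators $g=\sigma_\ell$, again by a finite case analysis. Once all these identities are in place, axioms \eqref{item:zappa-szep_product_acts_right} and \eqref{item:zappa-szep_product_is_acted_left} hold on all morphisms by construction. The hard part will be the braid-relation case: it is the only place where the non-abelian structure of $\Braid_{n+1}$ actually intervenes, both sides expanding into length-six words in $\Braid_{n+1}$ whose equality rests on a delicate interplay of braid and commutation moves according to whether $j$ lies in $\{i-1,i,i+1,i+2\}$ or outside that range.
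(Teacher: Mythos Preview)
Your approach is essentially the same as the paper's, and the key checks you identify --- the braid and commutation relations of $\Braid_n$ under $(-)^{\lambda_j}$, and the $\thomcat$-relation $\lambda_i\lambda_k=\lambda_k\lambda_{i+1}$ under both operations --- are exactly the ones the paper singles out.

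There is one slip in your bookkeeping: axiom \eqref{item:zappa-szep_product_is_acted_right} does \emph{not} concern only the left action, since its right-hand side contains the term $g^{f_1}$. To reduce it to the $\grpd_V$-case you need the $\pi$-equivariance $\pi(\beta^{\lambda_j})=\pi(\beta)^{\lambda_j}$, which is a straightforward check on generators but must be stated; the paper records it explicitly. Similarly \eqref{item:zappa-szep_1_is_acted_left} is about the right action, not the left, though it is indeed just the base case of your inductive definition.

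One shortcut the paper uses that you might adopt: for the $\thomcat$-relation check $(\sigma_\ell^{\lambda_i})^{\lambda_k}=(\sigma_\ell^{\lambda_k})^{\lambda_{i+1}}$, both sides are positive braid words of length at most $3$, so they equal $\iota\pi$ of themselves; applying $\pi$ and invoking the already-verified $\grpd_V$-case then finishes without a case split. Your direct case analysis is of course also fine.
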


In the proof we will use the fact that there is a set-theoretic splitting $\iota \colon \Sym_n \to \Braid_n$ that takes a reduced word $w(s_1, \ldots, s_{n-1})$ to the braid $w(\sigma_1,\ldots,\sigma_{n-1})$. This map is not multiplicative but if $\beta$ is a positive word (meaning involving no inverses) of length at most $3$ in the $\sigma_i$ then $\iota\pi(\beta) = \beta$.

\begin{proof}
As in the proof of Lemma~\ref{lem:thomcat_vcat_actions} most conditions hold by definition but we need to check well-definedness on relations. Namely
\small
\begin{align}
(\sigma_i \sigma_{i+1} \sigma_i) \cdot \lambda_k = \sigma_i \cdot (\sigma_{i+1} \cdot (\sigma_i \cdot \lambda_k)) &= \sigma_{i+1}\cdot (\sigma_i \cdot (\sigma_{i+1} \cdot \lambda_k)) = (\sigma_{i+1} \sigma_i \sigma_{i+1}) \cdot \lambda_k,\label{eq:bv_braid_acts}\\
(\sigma_i\sigma_{i+1}\sigma_i)^{\lambda_k} = \sigma_i^{(\sigma_{i+1}\sigma_i) \cdot \lambda_k}\sigma_{i+1}^{\sigma_i \cdot \lambda_k} \sigma_i^{\lambda_k}&= \sigma_{i+1}^{(\sigma_i\sigma_{i+1}) \cdot \lambda_k}\sigma_i^{\sigma_{i+1} \cdot \lambda_k} \sigma_{i+1}^{\lambda_k}=  (\sigma_{i+1}\sigma_i\sigma_{i+1})^{\lambda_k},\label{eq:bv_braid_is_acted}\\
(\sigma_i \sigma_j) \cdot \lambda_k = \sigma_i \cdot (\sigma_j \cdot \lambda_k) &= \sigma_i \cdot (\sigma_j \cdot \lambda_k) = (\sigma_j \sigma_i) \cdot \lambda_k,\label{eq:bv_commutator_acts}\\
(\sigma_i\sigma_j)^{\lambda_k} = \sigma_i^{\sigma_j \cdot \lambda_k} \sigma_j^{\lambda_k} &= \sigma_j^{\sigma_i \cdot \lambda_k} \sigma_i^{\lambda_k} = (\sigma_j\sigma_i)^{\lambda_k},\label{eq:bv_commutator_is_acted}\\
\sigma_i \cdot (\lambda_\ell \lambda_k) = (\sigma_i \cdot \lambda_\ell)(\sigma_i^{\lambda_\ell} \cdot \lambda_k) &= (\sigma_i \cdot \lambda_k)(\sigma_i^{\lambda_k} \cdot \lambda_{\ell+1}) = \sigma_i \cdot (\lambda_k \lambda_{\ell+1}), \mathrlap{\quad\text{and}}\label{eq:bv_split_is_acted}\\
\sigma_i^{\lambda_\ell \lambda_k} = (\sigma_i^{\lambda_\ell})^{\lambda_k} &= (\sigma_i^{\lambda_k})^{\lambda_{\ell+1}} = \sigma_i^{\lambda_k \lambda_{\ell+1}}\label{eq:bv_split_acts}
\end{align}
\normalsize
for $i-j \ge 2$, $\ell > k$.

Relations~\eqref{eq:bv_braid_acts} and~\eqref{eq:bv_commutator_acts} follow from Lemma~\ref{lem:thomcat_vcat_actions}. For the remaining relations note that $\pi(\beta^{\lambda_k}) = \pi(\beta)^{\lambda_k}$. Now~\eqref{eq:bv_split_is_acted} follows from Lemma~\ref{lem:thomcat_vcat_actions} as well because
\begin{equation}
\label{eq:thomcat_action_braids_permutations}
\pi(\sigma_i^{\lambda_\ell}) \cdot \lambda_{k} = \sigma_i^{\lambda_\ell} \cdot \lambda_{k}\quad\text{and}\quad\pi(\sigma_i^{\lambda_k}) \cdot \lambda_{\ell+1} = \sigma_i^{\lambda_k} \cdot \lambda_{\ell+1}\text{.}
\end{equation}
Relation~\eqref{eq:bv_split_acts} follows from Lemma~\ref{lem:thomcat_vcat_actions} by noting that both sides are positive words of length at most $3$ and applying $\iota$.

We verify \eqref{eq:bv_braid_is_acted} by distinguishing cases. The cases $k<i$ and $k > i+2$ are clear. If $k = i+1$ then the left hand side equals $(\sigma_{i+1}\sigma_i) \sigma_{i+2} (\sigma_{i+1}\sigma_i)$ and the right hand side equals $(\sigma_{i+1}\sigma_{i+2})\sigma_i(\sigma_{i+1}\sigma_{i+2})$. Both are equivalent through two braid relations with intermediate commutator relations. The cases $k = i$ and $k = i+2$ are symmetric and we only verify $k = i$. The left hand side equals $\sigma_i (\sigma_{i+1}\sigma_{i+2})(\sigma_i\sigma_{i+1})$ while the right hand side equals $(\sigma_{i+1}\sigma_{i+2})(\sigma_i\sigma_{i+1})\sigma_{i+2}$. Again these are equivalent through two braid relations with intermediate commutator relations.

Relation \eqref{eq:bv_commutator_is_acted} is left to the reader.
\end{proof}

For future reference we record \eqref{eq:thomcat_action_braids_permutations} which in the presence of Lemma~\ref{lem:bv_well_defined} can be formulated as:

\begin{observation}
\label{obs:bv_to_v_f_equiv}
The morphism $\pi \colon \grpd_{\mathit{BV}} \to \grpd_V$ is equivariant with respect to the $\thomcat$-action in the sense that
\[
\pi(\beta^f) = \pi(\beta)^f
\]
for $\beta \in \grpd_{\mathit{BV}}$ and $f \in \thomcat$.
\end{observation}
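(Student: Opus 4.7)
The plan is to reduce $\pi(\beta^f) = \pi(\beta)^f$ to the case of generators $\beta = \sigma_i$ and $f = \lambda_j$ and then verify that case directly from the defining formulas. The key structural observation, built into the definition of $\thomcat \bowtie \grpd_{\mathit{BV}}$, is that the $\grpd_{\mathit{BV}}$-action on $\thomcat$ factors through $\pi$: for every $\beta$ and $f$ one has $\beta \cdot f = \pi(\beta) \cdot f$. Combined with the fact that $\pi$ is a functor, this is what makes the induction below go through.

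The reduction on $f$ is immediate from~\eqref{item:zappa-szep_product_acts_right}: both $\beta^{f_1 f_2} = (\beta^{f_1})^{f_2}$ and $\pi(\beta)^{f_1 f_2} = (\pi(\beta)^{f_1})^{f_2}$, so induction on word length of $f$ in the generators $\lambda_j$ reduces to $f = \lambda_j$. The reduction on $\beta$ uses~\eqref{item:zappa-szep_product_is_acted_left}: for any factorisation $\beta = \beta_1 \beta_2$,
\[
\pi\bigl((\beta_1\beta_2)^f\bigr) = \pi(\beta_1^{\beta_2 \cdot f})\,\pi(\beta_2^f), \qquad \pi(\beta_1 \beta_2)^f = \pi(\beta_1)^{\pi(\beta_2) \cdot f}\,\pi(\beta_2)^f,
\]
and these match termwise thanks to the key observation $\beta_2 \cdot f = \pi(\beta_2) \cdot f$ together with the inductive hypothesis applied to $\beta_1$ (at the argument $\pi(\beta_2) \cdot f$) and to $\beta_2$ (at $f$). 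Inverses are handled by applying~\eqref{item:zappa-szep_product_is_acted_left} to $\beta\beta^{-1} = 1$, which expresses $(\beta^{-1})^f$ in terms of $\beta^{\beta^{-1} \cdot f}$ by a formula that is preserved by $\pi$. So it suffices to establish $\pi(\sigma_i^{\lambda_j}) = s_i^{\lambda_j}$ for all $i, j$.

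This last identity is a finite case analysis on the position of $j$ relative to $i$. From the definition of the $\thomcat$-action on $\grpd_{\mathit{BV}}$, the left-hand side equals $s_{i+1}$ when $j < i$, $s_i s_{i+1}$ when $j = i$, $s_{i+1} s_i$ when $j = i+1$, and $s_i$ when $j > i+1$. The right-hand side is computed by substituting the transposition $g = s_i$ into~\eqref{eq:thomcat_on_vcat} and reading off the resulting permutation on $\{1,\ldots,n+1\}$. The extreme cases $j < i$ and $j > i+1$ are essentially immediate, since only the first and fourth branches of~\eqref{eq:thomcat_on_vcat} come into play. The main obstacle, and really the only content of the proof, lies in the two middle cases $j = i$ and $j = i+1$: one must check that the ad hoc combinatorial rule~\eqref{eq:thomcat_on_vcat} produces precisely the three-cycle $s_i s_{i+1}$ respectively $s_{i+1} s_i$ predicted by the braid-level action. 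Once that bounded computation is carried out, the inductive reductions of the previous paragraph finish the proof.
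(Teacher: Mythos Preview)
Your proof is correct and follows the same approach implicit in the paper: the paper itself does not give a detailed argument for the observation but simply records it as a reformulation of the fact $\pi(\beta^{\lambda_k}) = \pi(\beta)^{\lambda_k}$ noted without proof inside the verification of Lemma~\ref{lem:bv_well_defined}, together with the well-definedness of the indirect product actions. Your reduction to generators via \eqref{item:zappa-szep_product_acts_right} and \eqref{item:zappa-szep_product_is_acted_left} (using that the $\grpd_{\mathit{BV}}$-action on $\thomcat$ factors through $\pi$) and your case check $\pi(\sigma_i^{\lambda_j}) = s_i^{\lambda_j}$ are exactly what is needed to make that terse remark rigorous.
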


We define the category $\BVcat$ to be $\thomcat \bowtie \grpd_{\mathit{BV}}$ with the above indirect product actions.

\begin{lemma}
The action of $\grpd_{\mathit{BV}}$ on $\thomcat$ is injective.
\end{lemma}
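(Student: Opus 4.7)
The plan is to invoke the general observation, already noted in the paragraph defining actions, that \emph{actions of groupoids are always injective}. By construction $\grpd_{\mathit{BV}}$ is a groupoid: its morphism sets are $\grpd_{\mathit{BV}}(n,n) = \Braid_n$, which are groups, and there are no morphisms between distinct objects. Hence this general principle applies directly and the statement should reduce to one line.

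More explicitly, suppose $\beta \cdot f = \beta \cdot f'$ for some $\beta \in \grpd_{\mathit{BV}}(n,n)$ and $f,f' \in \thomcat(n,-)$. Since $\grpd_{\mathit{BV}}$ is a groupoid, $\beta$ has an inverse $\beta^{-1} \in \grpd_{\mathit{BV}}(n,n)$, and the indirect product axioms \eqref{item:zappa-szep_1_acts_left} and \eqref{item:zappa-szep_product_acts_left} give
\[
f = 1_n \cdot f = (\beta^{-1}\beta) \cdot f = \beta^{-1} \cdot (\beta \cdot f) = \beta^{-1} \cdot (\beta \cdot f') = (\beta^{-1}\beta) \cdot f' = 1_n \cdot f' = f'\text{.}
\]
Equivalently, one may invoke that by definition $\beta \cdot f = \pi(\beta) \cdot f$, so injectivity reduces to injectivity of the $\grpd_V$-action, which is immediate for the same reason since $\grpd_V$ is a groupoid.

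There really is no obstacle here: the well-definedness of the action (the only nontrivial content) has already been established in Lemma~\ref{lem:bv_well_defined}, and the asymmetry with Lemma~\ref{lem:vcat_injective} is exactly the point---injectivity of a groupoid acting on a category is automatic, whereas injectivity of a category acting on a groupoid (as in Lemma~\ref{lem:vcat_injective} or Observation~\ref{obs:tcat_injective}) is the genuinely substantive direction needed in order to apply Proposition~\ref{prop:zappa-szep_conditions}\eqref{item:product_ore}.
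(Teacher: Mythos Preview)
Your argument is correct for the statement as literally written, but you have been tripped up by a typo in the paper. Look at the paper's own proof: it shows that the map $\beta \mapsto \beta^{\lambda_i}$ is injective by observing that $\beta$ can be recovered from $\beta^{\lambda_i}$ by removing the $(i+1)$st strand. That is a statement about the right action of $\thomcat$ on $\grpd_{\mathit{BV}}$, not about the left action of $\grpd_{\mathit{BV}}$ on $\thomcat$. In other words, the lemma \emph{should} read ``The action of $\thomcat$ on $\grpd_{\mathit{BV}}$ is injective,'' in exact parallel with Observation~\ref{obs:tcat_injective} and Lemma~\ref{lem:vcat_injective}, and that is what the paper actually proves and subsequently uses to deduce Corollary~\ref{cor:bvcat_righ_ore} via Proposition~\ref{prop:zappa-szep_conditions}\eqref{item:product_ore}.

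You essentially diagnose this yourself in your final paragraph: you note that the groupoid-on-category direction is automatic while the category-on-groupoid direction is the substantive one needed for right-Ore-ness. The missing step was to conclude from this very observation that the lemma title must be misstated. So while your one-line proof is valid for what is printed, it does not match the paper's proof because the paper is proving the other (nontrivial) direction; the intended content is the strand-removal argument, which you have not supplied.
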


\begin{proof}
We only need to check that $\beta \mapsto \beta^{\lambda_i}$ is injective. But $\beta$ can be recovered from $\beta^{\lambda_i}$ by removing the $(i+1)$st strand.
\end{proof}

\begin{corollary}
\label{cor:bvcat_righ_ore}
The category $\BVcat$ is right-Ore.
\end{corollary}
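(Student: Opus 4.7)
The plan is to derive Corollary~\ref{cor:bvcat_righ_ore} as a direct application of Proposition~\ref{prop:zappa-szep_conditions}(\ref{item:product_ore}), which is the general statement that indirect products inherit the right-Ore property from the first factor when (a) the first factor has no non-trivial invertibles, (b) the second factor is a discrete groupoid, and (c) the first factor acts injectively on the second. So the whole proof amounts to verifying these four ingredients for $\thomcat$ and $\grpd_{\mathit{BV}}$.

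First I would check the easy structural hypotheses. The groupoid $\grpd_{\mathit{BV}}$ is discrete by its very definition, since $\grpd_{\mathit{BV}}(m,n) = \emptyset$ for $m \neq n$. That $\thomcat$ contains no non-trivial invertible morphisms is a direct consequence of the height function $\rho = \id$ from Proposition~\ref{prop:thomcat_ore}: a forest in $\thomcat(m,n)$ only exists for $n \geq m$, so an invertible morphism must lie in $\thomcat(n,n)$, and there the only element is $1_n$ (any non-trivial forest would strictly increase the number of leaves). That $\thomcat$ is right-Ore is exactly Proposition~\ref{prop:thomcat_ore}.

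The remaining ingredient, injectivity of the action of $\thomcat$ on $\grpd_{\mathit{BV}}$, is precisely what the preceding lemma establishes: its proof shows that $\beta \in \Braid_n$ is recovered from $\beta^{\lambda_i} \in \Braid_{n+1}$ by deleting the $(i+1)$st strand, so the generator-level maps $\beta \mapsto \beta^{\lambda_i}$ are injective; by property \eqref{item:zappa-szep_product_acts_right} the action of a general $f = \lambda_{i_1}\cdots\lambda_{i_k}$ is a composition of such maps, hence injective.

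With these four points in place, Proposition~\ref{prop:zappa-szep_conditions}(\ref{item:product_ore}) applies and yields that $\BVcat = \thomcat \bowtie \grpd_{\mathit{BV}}$ is right-Ore. No step presents a genuine obstacle here; the only subtlety to be careful about is matching conventions — confirming that the injectivity hypothesis needed by Lemma~\ref{lem:zappa-szep_cancellative} (namely injectivity of the action of $\thomcat$ on $\grpd_{\mathit{BV}}$, which is what controls the recovery of $g_1 = g_2$ from $g_1^f g = g_2^f g$) matches exactly what the strand-deletion argument in the preceding lemma delivers.
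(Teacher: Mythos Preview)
Your proposal is correct and matches the paper's approach exactly: the corollary is an immediate consequence of Proposition~\ref{prop:zappa-szep_conditions}(\ref{item:product_ore}), using that $\thomcat$ is right-Ore with no non-trivial invertibles (Proposition~\ref{prop:thomcat_ore}), that $\grpd_{\mathit{BV}}$ is a discrete groupoid, and that the action of $\thomcat$ on $\grpd_{\mathit{BV}}$ is injective (the preceding lemma). Note that the paper's lemma statement contains a typo (it says ``the action of $\grpd_{\mathit{BV}}$ on $\thomcat$'' while its proof, and the hypothesis actually needed, concern the action of $\thomcat$ on $\grpd_{\mathit{BV}}$); you have correctly identified the right direction.
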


The fundamental group $\pi_1(\BVcat, 1)$ is the \emph{braided Thompson group $\mathit{BV}$}.

\medskip

\noindent
It is now easy to define braided versions of $T$ and $F$. We let $\grpd_{\mathit{BT}}$ and $\grpd_{\mathit{BF}}$ be the inverse image under $\pi \colon \grpd_{\mathit{BV}} \to \grpd_V$ of $\grpd_T$ and $\grpd_F$ respectively. Both of these act on $\thomcat$ by restricting the action of $\grpd_{\mathit{BV}}$, which is the same as to say that they act through $\pi$.

The action of $\thomcat$ of $\grpd_{\mathit{BV}}$ leaves $\grpd_{\mathit{BT}}$ and $\grpd_{\mathit{BF}}$ invariant and restricts to actions on these, thanks to Observation~\ref{obs:bv_to_v_f_equiv}: we know from Section~\ref{sec:tcat_vcat} that $\thomcat$ leaves $\grpd_{T}$ invariant and it is axiomatically required that it leaves the trivial groupoid invariant. Hence if $\beta \in \grpd_{\mathit{BT}}$ and $f \in \thomcat$ then $\pi(\beta^f) = \pi(\beta)^f \in \grpd_T$ so that $\beta^f \in \grpd_{\mathit{BT}}$ and an analogous reasoning applies for $\beta \in \grpd_{\mathit{BF}}$.

As a consequence we can define the categories $\BTcat = \thomcat \bowtie \grpd_{\mathit{BT}}$ and $\BFcat = \thomcat \bowtie \grpd_{\mathit{BF}}$, which are right-Ore. The group $\mathit{BF} = \pi_1(\BFcat,1)$ is called \emph{braided $F$} and was first introduced in \cite{brady08}. We call the group $\mathit{BT} = \pi_1(\BTcat,1)$ \emph{braided $T$}.

\begin{remark}
\label{rem:bt}
The group $\mathit{BT}$ was not introduced before for the following technical reason. Instead of our category $\BVcat$ Brin \cite{brin07} used a monoid that can be thought of as a category with a single object $\omega$ which represents countably infinitely many strands. This is possible because splitting one of countably infinitely many strands leads to countably infinitely many strands and because braid groups $\Braid_n$ are contained in a braid group $\varinjlim \Braid_n$ on infinitely many strands. A practical downside of that approach is that the group of fractions of that monoid is too big so one needs to describe which elements should be elements of $\mathit{BV}$. A formal downside is that groups like $\mathit{BT}$ or even $T$ cannot be described because $\Z/n\Z$ is not contained in $\Z/(n+1)\Z$ so that the needed limit does not exist.

Despite this formal problem, the main topological ingredient to establishing the finiteness properties of $\mathit{BT}$ has been verified in \cite[Section~3.4]{bux16}.
\end{remark}

\subsection{Groups arising from cloning systems}

In \cite{witzel16a} Zaremsky and the author have defined (filtered) cloning systems to be the data needed to define indirect product actions of $\thomcat$ and a groupoid on each other. Thus the groups considered there are by definition fundamental groups of categories $\thomcat \bowtie \grpd$ where $\grpd$ is a groupoid. However, the approach follows Brin \cite{brin07} to construct the groups as subgroups of an indirect product of monoids $\thomcat_\infty \bowtie \grpd_\infty$. As a consequence it has to deal with technical complications such as the notion of being \emph{properly graded}, as well as practical shortcomings such as being unable to construct (braided) $T$.

Our categorical approach removes the necessity that the groups $(G_n)_n$ fit into a directed system of groups and therefore the whole discussion goes through without that assumption. Thus a \emph{cloning system} is given by a sequence $(G_n)_{n \in \N}$ of groups, a sequence $(\rho_n)_{n \in \N} \colon G_n \to S_n$ of morphisms and a family of maps $(\kappa^n_k)_{k \le n} \colon G_n \to G_{n+1}$ such that the following hold for all $k \le n$, $k<\ell$, and $g,h\in G_n$:
 \begin{enumerate}[label={(CS\arabic*)}, ref={CS\arabic*}, leftmargin=*]
  \item $(gh) \kappa_k^n = (g)\kappa_{\rho(h)k}^n(h)\kappa_k^n$. \label{item:fcs_cloning_a_product}\hfill (Cloning a product)
  \item $\kappa_\ell^n \circ \kappa_k^{n+1} = \kappa_k^n \circ \kappa_{\ell+1}^{n+1}$.\label{item:fcs_product_of_clonings}\hfill (Product of clonings)
  \item $\rho_{n+1}((g)\kappa^n_k)(i) = (\rho_n(g)) \varsigma^n_k (i)$ for all $i\ne k,k+1$ \label{item:fcs_compatibility}\hfill(Compatibility)
 \end{enumerate}
Here $\varsigma^n_k$ describes the action of $\thomcat$ on $\grpd_V$ so that $(g)\varsigma^n_k(j) = g^{\lambda_k}(j)$ as in \eqref{eq:vcat_on_thomcat}.

Given a cloning system, a groupoid $\grpd$ is defined by setting $\grpd(m,n) = \emptyset$ if $m \ne n$ and setting $\grpd(n,n) = G_n$. Indirect product actions of $\thomcat$ and $\grpd$ on each other are defined by $g \cdot \lambda^n_k = \lambda^{n+1}_{\rho_n(g)k}$ and $g^{\lambda^n_k} = (g)\kappa^n_k$ for $g \in G_n$. The axioms \eqref{item:fcs_cloning_a_product}, \eqref{item:fcs_product_of_clonings}, \eqref{item:fcs_compatibility} ensure that these indeed define indirect product actions.

\subsection{The Higman--Thompson groups}

In total analogy to Section~\ref{sec:tcat_vcat} one can define $\Tcat_n = \thomcat_n \bowtie \grpd_T$ and $\Vcat_n = \thomcat_n \bowtie \grpd_V$. As mentioned in Section~{sec:fn} the category $\thomcat_n$ is not connected for $n > 2$ and neither are the categories $\Tcat_n$ and $\Vcat_n$. Thus it makes sense to define the groups
\begin{align*}
T_{n,r} &= \pi_1(\Tcat_n,r)\\
V_{n,r} &= \pi_1(\Vcat_n,r)
\end{align*}
and unlike the situation of $\thomcat_n$, these groups are indeed distinct for different $r$. They are the remaining \emph{Higman--Thompson groups}.

\subsection{Groups from graph rewriting systems}
\label{sec:graph_rewriting}

We now look at indirect products that do not involve $\thomcat$. The corresponding groups have been introduced and described in some detail in \cite{belk}. In this section, when we talk about graphs we will take their edges to be directed and allow multiple edges and loops. In particular, every vertex has an initial and a terminal vertex. The edge set of a graph $G$ is denoted $E(G)$ and the vertex set $V(G)$.

An \emph{edge replacement rule} $e \to R$ consists of a single directed edge $e$ and a finite graph $R$ that contains the two vertices of $e$ (but not $e$ itself). If $G$ is any graph and $\varepsilon$ is an edge of $G$, the edge replacement rule can be \emph{applied to $G$ at $\varepsilon$} by removing $\varepsilon$ and adding in a copy of $R$ while identifying the initial/terminal vertex of $\varepsilon$ with the initial/terminal vertex of $e$ in $R$. The resulting graph is denoted $G \lhd \varepsilon$. If $\delta$ is another edge of 
$G$, then it is also an edge of $G \lhd \varepsilon$ and so the replacement rule can be applied to $G \lhd \varepsilon$ at $\delta$. We regard $G \lhd \varepsilon \lhd \delta$ and $G \lhd \delta \lhd \varepsilon$ as the same graph.

The vagueness inherent in the last sentence can be remedied by declaring that a graph obtained from $G$ by applying the edge replacement rule (possibly many times) has as edges words in $E(G) \times E(R)^*$ and as vertices words in $V(G) \cup (E(G) \times E(R)^* \times V(R))$. For example, the graph $G \lhd \varepsilon \lhd \delta$ would have edges $\zeta \in E(G) \smallsetminus \{\varepsilon, \delta\}$ and $\varepsilon\xi$, $\delta\xi$ for $\xi \in E(R)$ and vertices $v \in V(G)$ as well as $\varepsilon w$ and $\delta w$ for $w \in V(R)$.

For every edge replacement rule $e \to R$ we define a category $\rewcat_{e \to R}$ whose objects are finite graphs. In order for the category to be small we will take the graphs to have vertices and edges coming from a fixed countable set, which in addition is closed under attaching words in $E(R)$ and $V(R)$. The category is presented by having generators
\[
\lambda^G_\varepsilon \in \rewcat_{e \to R}(G, G\lhd \varepsilon)\quad \text{for}\quad G\text{ a graph and }\varepsilon \text{ an edge of }G
\]
subject to the relations
\begin{equation}
\label{eq:rewriting_relation}
\lambda^G_\delta \lambda^{G \lhd \delta}_\varepsilon = \lambda^{G}_\varepsilon \lambda^{G \lhd \varepsilon}_\delta \quad \text{for}\quad G\text{ a graph and }\delta, \varepsilon \text{ distinct edges of }G\text{.}
\end{equation}

\begin{lemma}
For any edge replacement rule $e \to R$ the category $\rewcat_{e \to R}$ is right-Ore.
\end{lemma}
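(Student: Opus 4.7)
The plan is to parallel the proof of Proposition~\ref{prop:thomcat_ore} for $\thomcat$, with iterated edge replacements playing the role of forests of carets. First I would establish a combinatorial normal form: for any graph $G$, attach to each edge $\varepsilon \in E(G)$ the infinite rooted tree $T_\varepsilon$ whose children of any node $\varepsilon'$ are (the labels of) the edges of the fresh copy of $R$ produced by replacing $\varepsilon'$. A morphism $f$ with target $G$ should then correspond to a finite ``rewriting subforest'' $S_f \subseteq \bigsqcup_{\varepsilon \in E(G)} T_\varepsilon$ recording precisely the nodes that are rewritten by $f$, so that the edges of the source of $f$ are exactly the edges of $G$ not in $S_f$ together with the leaves of $S_f$. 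The relation \eqref{eq:rewriting_relation} expresses precisely the statement that adjacent rewritings on distinct edges commute, so a standard diamond-lemma / Church--Rosser argument (fix a well-order on $\bigsqcup_\varepsilon T_\varepsilon$ and sort the generator letters accordingly) would show that two generator words represent the same morphism if and only if they produce the same $S_f$.

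With this normal form in hand, common right-multiples and left cancellativity are immediate, mirroring the forest case. Given $f, g$ with the same target $G$, the union $S_f \cup S_g$ is again a finite down-closed subset of $\bigsqcup_\varepsilon T_\varepsilon$, and the morphism it defines is a common right-multiple (in fact the least common one, with $S_f \cap S_g$ yielding a greatest common left-factor). For left cancellativity, if $ef = eh$ then $f, h$ share the target (source of $e$), and their subforests can be read off from $S_{ef}, S_{eh}$ as the portion lying above the frontier of $S_e$; hence $S_{ef} = S_{eh}$ forces $S_f = S_h$, and so $f = h$.

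The subtler case is right cancellativity: $fe = he \Rightarrow f = h$. Here the source of $e$ is a fixed graph $K$ whose edge labels are prescribed words in $E(G) \times E(R)^*$ (the paper's naming convention) and therefore single out a specific set of nodes in $\bigsqcup_{\varepsilon \in E(G)} T_\varepsilon$. This set must coincide with the frontier of both $S_f$ and $S_h$, and since a finite down-closed subset of a rooted forest is uniquely determined by its frontier, $S_f = S_h$ and thus $f = h$. The main obstacle in the whole argument is the normal-form theorem of the first paragraph: once the diamond-lemma confluence has been established rigorously, all three Ore properties reduce to elementary bookkeeping on finite subforests of $\bigsqcup_\varepsilon T_\varepsilon$.
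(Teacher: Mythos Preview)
Your approach is essentially the paper's, expanded. The paper's two-sentence proof asserts that, thanks to the commutation relations~\eqref{eq:rewriting_relation}, a morphism $\lambda_{\varepsilon_1}\cdots\lambda_{\varepsilon_k}$ is determined by its source, target, and the \emph{set} $\{\varepsilon_1,\ldots,\varepsilon_k\}$ --- exactly your $S_f$ --- and then says that unions and differences of these sets give common right-multiples and cancellativity; your diamond-lemma normal form and your separate treatments of left and right cancellation simply fill in the details behind those two sentences.
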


\begin{proof}
Thanks to the relations \eqref{eq:rewriting_relation} a morphism $\lambda_{\varepsilon_1} \ldots \lambda_{\varepsilon_k}$ in $\rewcat_{e \to R}$ is uniquely determined by its source, its target, and the set $\{\varepsilon_1, \ldots, \varepsilon_k\}$. The claim now follows by taking differences and unions of these sets of edges.
\end{proof}

As in previous sections, the second ingredient will be a groupoid. Its 
definition does not depend on the edge replacement rule, except possibly for the 
foundational issues of choosing universal sets of vertices and edges. We define 
$\grpd_\text{graph}$ to have as objects finite graphs and as morphisms 
isomorphisms of graphs.

We define actions of $\rewcat_{e \to R}$ and $\grpd_\text{graph}$ on each other as follows. If $g \colon G \to G'$ is an isomorphism of graphs and $\varepsilon \in E(G)$ is an edge then
\[
g \cdot \lambda^G_{\varepsilon} = \lambda^{G'}_{g(\varepsilon)}
\]
and $g^{\lambda_{\varepsilon}}$ is the isomorphism $G \lhd \varepsilon \to G' \lhd g(\varepsilon)$ that takes $\delta$ to $g(\delta)$ for $\delta \in E(G) \smallsetminus \{\varepsilon\}$ and that takes $\varepsilon\zeta$ to $g(\varepsilon)\zeta$ for $\zeta \in V(R) \cup E(R)$. The following is easy to verify:

\begin{observation}
The actions of $\rewcat_{e \to R}$ and $\grpd_\text{graph}$ on each other defined above are well-defined indirect product actions. The action of $\rewcat_{e \to R}$ on $\grpd_\text{graph}$ is injective.
\end{observation}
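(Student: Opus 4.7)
The plan is to break the verification into three parts: well-definedness of each of the two actions, the indirect product action axioms (IP1)--(IP8), and finally injectivity.

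For well-definedness I would extend each action from the generating $\lambda^G_\varepsilon$'s to all of $\rewcat_{e \to R}$ using (IP8) for the $\grpd_\text{graph}$-action and (IP4) for the $\rewcat_{e \to R}$-action, and then verify that the commutation relation \eqref{eq:rewriting_relation} is respected in each case. For the left action of $\grpd_\text{graph}$, applying an isomorphism $g \colon G \to G'$ to the two sides of \eqref{eq:rewriting_relation} via (IP8) produces $\lambda^{G'}_{g(\delta)}\lambda^{G' \lhd g(\delta)}_{g(\varepsilon)}$ and $\lambda^{G'}_{g(\varepsilon)}\lambda^{G' \lhd g(\varepsilon)}_{g(\delta)}$, which agree because $g(\delta) \ne g(\varepsilon)$ are distinct edges of $G'$ and the same relation \eqref{eq:rewriting_relation} holds there. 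For the right action of $\rewcat_{e \to R}$ on $\grpd_\text{graph}$, I would verify $(g^{\lambda_\delta})^{\lambda_\varepsilon} = (g^{\lambda_\varepsilon})^{\lambda_\delta}$; both sides give the same isomorphism $G \lhd \delta \lhd \varepsilon \to G' \lhd g(\delta) \lhd g(\varepsilon)$, since each acts as $g$ on edges of $G$ different from $\delta$ and $\varepsilon$ and relabels the two attached copies of $R$ by replacing the prefixes $\delta,\varepsilon$ with $g(\delta), g(\varepsilon)$, independently of the order in which the two replacements are performed.

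The indirect product axioms are then mostly immediate: (IP1), (IP2), (IP5), (IP6) follow directly from the formulas; (IP3) and (IP4) are built into how the actions were extended from generators; and (IP8) is essentially how the formula $g \cdot \lambda^G_\varepsilon = \lambda^{G'}_{g(\varepsilon)}$ unravels. The only axiom requiring a genuine computation is (IP7), and by (IP4) it suffices to check it on a single generator: for composable $g_1,g_2$ ending at $G$, both $(g_1 g_2)^{\lambda^G_\varepsilon}$ and $g_1^{g_2 \cdot \lambda^G_\varepsilon} g_2^{\lambda^G_\varepsilon}$ are the isomorphism that applies $g_1 g_2$ to edges $\delta \ne \varepsilon$ of $G$ and sends $\varepsilon\zeta$ to $g_1(g_2(\varepsilon))\zeta$.

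For injectivity I would induct on the length of $f = \lambda_{\varepsilon_1}\cdots\lambda_{\varepsilon_k}$ expressed in the generators. In the base case $k = 1$, suppose $g_1^{\lambda^G_\varepsilon} = g_2^{\lambda^G_\varepsilon}$. Since the two morphisms share source and target in $\grpd_\text{graph}$, the source $G$ and the codomains $G' \lhd g_1(\varepsilon) = G' \lhd g_2(\varepsilon)$ agree; the latter forces $g_1(\varepsilon) = g_2(\varepsilon)$ from the labeling convention. The values on edges $\delta \in E(G)\smallsetminus\{\varepsilon\}$ match by inspection, and the values on vertices are then recovered as endpoints of edges. The inductive step is immediate from (IP4). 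The main (though genuinely minor) obstacle throughout is keeping track of the labeling convention $\varepsilon\zeta$ for the new edges and vertices introduced by replacement; once that bookkeeping is set up, every equation collapses to a tautology.
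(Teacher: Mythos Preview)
Your proposal is correct and carries out exactly the routine verification that the paper has in mind; the paper itself gives no proof beyond the phrase ``easy to verify,'' so there is nothing to compare against at the level of argument. Your organization---check the defining relation \eqref{eq:rewriting_relation} for both actions, then (IP1)--(IP8), then injectivity by restricting $g^{\lambda_\varepsilon}$ to the undisturbed edges---is the natural one, and every step is sound.
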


As a consequence we obtain a right-Ore category $\rewgpcat_{e \to R} \defeq \rewcat_{e \to R} \bowtie \grpd_\text{graph}$ and for every finite graph $G$ we obtain a potential group $\pi_1(\rewgpcat_{e \to R}, G)$.

\begin{example}
If we consider the edge replacement rule
\begin{center}
\begin{tikzpicture}[scale=1, thick]
\node at (-1,1) {$e=$};
\node at (4,1) {$=L_2$};
\begin{scope}[every node/.style={circle, fill, inner sep=2pt}]
\node[label=left:{$v$}] (lv) at (0,0) {};
\node[label=left:{$w$}] (lw) at (0,2) {};

\node[label=left:{$v$}] (v) at (2.5,0) {};
\node[label=left:{$w$}] (w) at (2.5,2) {};
\node (4) at (2.5,1) {};
\end{scope}
\draw[->-] (lv) to (lw);

\node at (1,1) {$\to$};

\draw[->-] (v) to[left] (4);
\draw[->-] (4) to[left] (w);
\end{tikzpicture}
\end{center}
and take $L_1$ to be the graph consisting of a single edge then then $\pi_1(\rewgpcat_{e \to L_2}, L_1)$ is isomorphic to $F$. Similarly, if $C_1$ is the graph consisting of a single loop then $\pi_1(\rewgpcat_{e \to L_2}, C_1)$ is isomorphic to $T$. Finally, $V$ arises as $\pi_1(\rewgpcat_{e \to D_2}, L_1)$ where the rule $e \to D_2$ replaces an edge by two disconnected edges.
\end{example}

Various fundamental groups of categories arising from graph rewriting systems are described in \cite{belk}. Here we will only mention the Basilica Thompson group introduced by them in \cite{belk15}.

We consider the replacement rule
\begin{center}
\begin{tikzpicture}[scale=1, thick]
\node at (-1,1) {$e=$};
\node at (4.5,1) {$=R$};
\begin{scope}[every node/.style={circle, fill, inner sep=2pt}]
\node[label=left:{$v$}] (lv) at (0,0) {};
\node[label=left:{$w$}] (lw) at (0,2) {};

\node[label=left:{$v$}] (v) at (2.5,0) {};
\node[label=left:{$w$}] (w) at (2.5,2) {};
\node (4) at (2.5,1) {};
\end{scope}
\draw[->-] (lv) to (lw);

\node at (1,1) {$\to$};

\draw[->-] (v) to[left] (4);
\draw[->-] (4) to[out=-\loopangle, in=\loopangle, min distance=1cm, looseness=5,right] (4);
\draw[->-] (4) to[left] (w);
\end{tikzpicture}
\end{center}
and the graph
\begin{center}
\begin{tikzpicture}[scale=2, thick]
\node at (-1.2,0) {$G=$};
\begin{scope}[every node/.style={circle, fill, inner sep=2pt}]
\node (x) at (0,0) {};
\node (y) at (1,0) {};
\end{scope}
\begin{scope}
\path (x) edge[->-,out=180-\loopangle, in = 180+\loopangle, min distance = 1cm, looseness=5, left] (x);
\path (y) edge[->-,out=0-\loopangle, in = 0+\loopangle, min distance = 1cm, looseness=5, right] node{.} (y);
\path (x) edge[->-,out=0-\picangle, in = 180+\picangle, below] (y);
\path (y) edge[->-,out=180-\picangle, in = 0+\picangle, above] (x);
\end{scope}
\end{tikzpicture}
\end{center}

The \emph{Basilica Thompson group} is $T_B \defeq \pi_1(\rewgpcat_{e \to R}, G)$.


\section{Examples: Finiteness properties}
\label{sec:finiteness_properties_examples}

In this final section we give various examples of applications of Theorem~\ref{thm:generic_proof} and Corollary~\ref{cor:generic_proof}. In most cases these finiteness properties are known and the proofs involve proving that certain complexes are highly connected. We will see that these complexes always coincide with the complexes $\abs{E(x)}$. As a consequence the connectivity statement from the literature together with Theorem~\ref{thm:generic_proof} gives the result.

\subsection{Finiteness properties of Thompson's groups}

We start with the categories $\Tcat$ and $\Vcat$. The conditions needed to apply the results from Section~\ref{sec:finiteness_properties} have been verified in Corollary~\ref{cor:tcat_righ_ore} and Corollary~\ref{cor:vcat_righ_ore}.

In order to apply Corollary~\ref{cor:generic_proof} two more things are left to verify: that automorphism groups are of type $F_\infty$ and that the connectivity of the simplicial complexes $\abs{E(n)}$ goes to infinity with $n$. The groups $\thomcat(n,n) = \{1\}$, $\Tcat(n,n) = \Z/n\Z$ and $\Vcat(n,n) = \Sym_n$ are all finite and therefore of type $F_\infty$.

In order to describe the complexes $E(n)$, we need to talk about further graphs. The \emph{cyclic graph} is denoted $C_n$, it has the same edges as $L_n$ and additionally $\{1,n\}$. The \emph{complete graph} $K_n$ has all edges $\{i,j\}$, $1 \le i < j \le n$. We describe the complexes $E(n)$ in the case of $\Vcat$ and leave $\Tcat$ to the reader.

\begin{lemma}
\label{lem:link_match_cyclic}
The poset $E_\Tcat(n)$ is isomorphic to $\match(C_n)$.
\end{lemma}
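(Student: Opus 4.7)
The approach mirrors Lemma~\ref{lem:link_match_linear}, with additional bookkeeping for the $\grpd_T$-action and the cyclic reference point encoded by $g$. The key observation is that the two cyclic symmetries---rotating the trees of $f$ via the $\grpd_T$-action on the source, and shifting leaf-labels via $g \in \grpd_T$ on the target---combine so that the associated matching naturally lives on the cycle $C_n$ rather than on the linear graph $L_n$.

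First I will unpack the elementary morphisms. By Proposition~\ref{prop:zappa-szep_conditions}\eqref{item:product_left_gars}, the family $\gars$ from Corollary~\ref{cor:f_garside_family} remains a left-Garside family in $\Tcat = \thomcat \bowtie \grpd_T$, so every non-invertible elementary morphism with target $n$ is of the form $a = fg$ with $f \in \gars(m,n) \subseteq \thomcat$ an elementary forest and $g \in \grpd_T(n,n) = \Z/n\Z$. Since $\Tcat^\times = \grpd_T$, the equivalence defining $E_\Tcat(n)$ becomes $(f,g) \sim (h \cdot f,\, h^f g)$ for $h \in \grpd_T(m,m)$.

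Next I will define a map $\Phi \colon E_\Tcat(n) \to \match(C_n)$: cyclically label the leaves of $f$ by $\Z/n\Z$ with offset determined by $g$, so that each caret of $f$ becomes an edge of $C_n$ joining two cyclically adjacent leaves. The resulting edge set $M_{(f,g)}$ is a matching of $C_n$. The key computation is that $\Phi$ is well-defined on equivalence classes: rotating the trees of $f$ by $h \in \Z/m\Z$ translates the caret positions cyclically by exactly $h^f$ (the total number of leaves on the rotated trees, as in the discussion preceding Observation~\ref{obs:tcat_injective}), and this translation is precisely compensated by replacing $g$ with $h^f g$.

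For the inverse, given $M \in \match(C_n)$ I pick an edge of $C_n$ not in $M$ (possible since $M$ has at most $\lfloor n/2 \rfloor$ edges), cut the cycle at that edge to obtain a matching of $L_n$, and apply Lemma~\ref{lem:link_match_linear} to recover the forest $f$; the choice of cut records $g$. Different choices of cut yield representatives related by the $\grpd_T$-action, so the inverse is well-defined on $\match(C_n)$. Order preservation is then immediate as in Lemma~\ref{lem:link_match_linear}: the partial order on $E_\Tcat(n)$ amounts to composing $a$ on the left by a non-invertible elementary morphism, i.e., adding carets to $f$, which exactly adds edges to $M_{(f,g)}$. The only nontrivial (though routine) step is the index bookkeeping verifying that the $h^f$-translation cancels the caret shift; everything else is conceptually the same as in Lemma~\ref{lem:link_match_linear}.
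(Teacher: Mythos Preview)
Your proposal is correct and follows exactly the approach the paper intends. Note that the paper does not actually give a proof of this lemma: it states ``We describe the complexes $E(n)$ in the case of $\Vcat$ and leave $\Tcat$ to the reader,'' and then proves only Lemma~\ref{lem:link_match_complete}. Your argument is the natural adaptation of that proof (and of Lemma~\ref{lem:link_match_linear}) to the cyclic case, and the key computation---that the cyclic shift $h^f$ induced on leaves exactly compensates the rotation of trees---is precisely the content of the description of the $\thomcat$-action on $\grpd_T$ preceding Observation~\ref{obs:tcat_injective}.

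One small remark: your inverse construction via cutting the cycle at a non-matched edge is a nice concrete way to see surjectivity and well-definedness simultaneously, since the $n-c$ edges of $C_n$ not in a $c$-edge matching correspond exactly to the $m = n-c$ ``boundaries between trees'' (including the wrap-around), and these are permuted simply transitively by $\grpd_T(m,m)$. This makes the bijection more transparent than a direct count would. For very small $n$ (specifically $n=2$) the statement relies on reading $C_2$ as the multigraph with two parallel edges, consistent with the paper's definition of $C_n$ as $L_n$ plus the additional edge $\{1,n\}$.
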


\begin{lemma}
\label{lem:link_match_complete}
There is a poset-morphism $E_\Vcat(n) \to \match(K_n)$ whose fibers over $k$-simplices are $k$-spheres.
\end{lemma}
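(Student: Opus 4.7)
The plan is to define the map $\phi$ on a representative $(f, \tau) \in \eltry_\Vcat(-, n) \smallsetminus \eltry^\times$ (with $f$ a non-trivial elementary forest having $n$ leaves and $\tau \in \Sym_n$) by $\phi([(f, \tau)]) \defeq \tau^{-1}(M_f) \in \match(K_n)$, where $M_f$ is the matching of $L_n$ associated to $f$ by Lemma~\ref{lem:link_match_linear} and $\tau^{-1}$ acts on edges through its action on $\{1, \ldots, n\}$. Writing $(f, \tau) = (f, 1)(1, \tau)$, the permutation $\tau$ acts first on the source leaves and $f$ then merges certain consecutive pairs, so $\phi$ records which pairs of original leaves are merged. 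For well-definedness I would verify the identity $M_{g \cdot f} = g^f(M_f)$, which holds because $g^f$ is defined precisely to relabel the leaves of $f$ as those of $g \cdot f$; order-preservation then follows by an analogous computation starting from a factorization $\bar b = h \bar a$ with $h = (h_1, h_2) \in \Vcat$, using that $f_b$ contains $h_2 \cdot f_a$ as a left-factor in $\thomcat$.

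The heart of the argument is the fiber analysis. Fixing a $k$-simplex $M = \{e_1, \ldots, e_{k+1}\}$ of $\match(K_n)$ with $e_i = \{u_i, v_i\}$, I interpret the fiber over $M$ as the subcomplex of $E_\Vcat(n)$ consisting of those $\bar a$ with $\phi(\bar a) \subseteq M$. The main combinatorial claim is that for each non-empty sub-matching $M' \subseteq M$ of size $s$, the preimage of $M'$ contains exactly $2^s$ equivalence classes, indexed by independent orientations $o_i \in \{(u_i, v_i), (v_i, u_i)\}$, one for each $e_i \in M'$. Given prescribed orientations, a representative is built by choosing any $\tau$ mapping each $e_i$ to a consecutive pair of $L_n$ in the given order, placing unmatched vertices arbitrarily on the remaining positions, and taking $f$ to be the elementary forest with $M_f = \tau(M')$. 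Two representatives with identical orientations are equivalent: $\tau_2 \tau_1^{-1}$ is realized as $g^{f_1}$ for some $g \in \Sym_{n-s}$ via the injective action of $\thomcat$ on $\grpd_V$ (Lemma~\ref{lem:vcat_injective}), and the corresponding $g$ satisfies $g \cdot f_1 = f_2$. Different orientations produce non-equivalent representatives because $g^f$ always preserves the within-caret order of leaves.

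The face relation in $E_\Vcat(n)$ (replacing $f$ by a left-factor removing one caret) corresponds to removing one edge of $M'$ together with its orientation; the fiber is thus isomorphic to the simplicial complex on vertex set $\{e_i^+, e_i^- : 1 \le i \le k+1\}$ whose simplices are subsets containing at most one of $\{e_i^+, e_i^-\}$ per $i$. This is the boundary of the $(k+1)$-dimensional cross-polytope, equivalently the $(k+1)$-fold join of $S^0$, which is homeomorphic to $S^k$. The main obstacle I anticipate is verifying that $g^f$ preserves within-caret orientation, i.e., that for each $g \in \Sym_{n-s}$ the induced leaf permutation maps each caret pair $(p, p+1)$ of $f$ to the corresponding pair of $g \cdot f$ in the same order rather than reversed. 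I would establish this by induction on the number of carets, writing $f$ as a product of caret generators and reducing to the generator case $g^{\lambda_i}$, where the claim is transparent from the explicit formula \eqref{eq:thomcat_on_vcat}.
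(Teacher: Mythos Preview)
Your proposal is correct and follows the same route as the paper: map $(f,\tau)$ to the matching $\tau^{-1}(M_f)$ on $K_n$, observe that only the orientation of each matched pair survives the quotient by left-multiplication, and conclude that the fiber over a $k$-simplex is the join of $k+1$ copies of $S^0$, i.e.\ the boundary of the $(k{+}1)$-cross-polytope. The paper's proof is terse where yours is explicit, but the content is identical.

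One small correction: when you argue that two representatives with the same orientations are equivalent, you invoke Lemma~\ref{lem:vcat_injective} to produce $g \in \Sym_{n-s}$ with $g^{f_1} = \tau_2\tau_1^{-1}$. Injectivity gives \emph{uniqueness} of such a $g$, not existence. Existence is what you actually need here, and it follows directly by defining $g$ on roots: send each caret tree of $f_1$ to the caret tree of $f_2$ whose leaves are the $\tau_2\tau_1^{-1}$-images, and each singleton tree to the singleton tree containing the image leaf; the ``same orientation'' hypothesis is exactly what guarantees that $\tau_2\tau_1^{-1}$ sends caret pairs to caret pairs \emph{in order}, so that the resulting $g$ satisfies $g^{f_1} = \tau_2\tau_1^{-1}$ and $g \cdot f_1 = f_2$.
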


\begin{proof}
Every element of $(\eltry \bowtie \grpd_V)(-,n)$ can be written as a product $fg$ of an elementary forest $f \in \eltry(-,n)$ and a permutation $g \in \grpd_V(n,n)$. By definition the vertices of $E(n)$ are these products modulo multiplication by permutations from the left. As in Lemma~\ref{lem:link_match_linear} an elementary forest can be interpreted as a matching on $L_n$. Under this correspondence, the group $\grpd_V(n,n) = \Sym_n$ acts on the vertices of $L_n$ and the permutations from the left act on components of the matching. Thus elements of $(\eltry \bowtie \grpd_V)(-,n)$ can be described by matchings on the linear graph on $g^{-1}(1), \ldots, g^{-1}(n)$ modulo reordering the components of the matching.

The possibility of reordering the vertices of the matching means that any two elements of $\{1,\ldots,n\}$ can be connected and so we obtain a map $\abs{E(n)} \to \match(K_n)$ to the matching complex of the \emph{complete} graph on $\{1,\ldots,n\}$. This map is clearly surjective.

It is not injective because in $E(n)$ the order of two matched vertices matters while in $\match(K_n)$ it does not. For example, $\lambda_i$ and $\lambda_i (i\ i+1)$ map to the same vertex in $\match(K_n)$. As a result the fiber over a $k$-simplex is a join of $k+1$ many $0$-spheres, i.e.\ a $k$-sphere.
\end{proof}

The fact that the morphism in Lemma~\ref{lem:link_match_complete} is not an isomorphism means that we have to do one extra step, namely to apply the following result by Quillen \cite[Theorem~9.1]{quillen73}. Rather than giving the general formulation for posets we restrict to face posets of ($n$-skeleta of) simplicial complexes, to save us some notation.

\begin{theorem}
\label{thm:quillen}
Let $n \in \N$ and let $f \colon X \to Y$ be a simplicial map. Assume that $Y$ is $(n-1)$-connected and that for every $k$-simplex $\sigma$ of $Y$ the link $\lk \sigma$ is $(n -\dim \sigma-2)$-connected and the fiber $\abs{f^{-1}(\sigma)}$ is $(k-1)$-connected. Then $X$ is $(n-1)$-connected.
\end{theorem}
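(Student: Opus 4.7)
The plan is to prove the theorem by induction on $n$, using a Mayer--Vietoris / van Kampen decomposition that removes one top-dimensional simplex of $Y$ at a time. The base case $n = 0$ reduces to nonemptiness of $X$: since $Y$ is $(-1)$-connected it contains a vertex $v$, and the fiber $\abs{f^{-1}(v)}$ is $(-1)$-connected by hypothesis, hence nonempty, so $X$ is nonempty.

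For the inductive step, first reduce to $Y$ finite-dimensional by compactness of spheres. Fix a top-dimensional simplex $\sigma$ of $Y$ of dimension $k$, let $Y_0 \subseteq Y$ be the subcomplex obtained by removing the open face $\sigma$, and write $X_0 = f^{-1}(Y_0)$, $X_\sigma = f^{-1}(\overline{\sigma})$, so that $X = X_0 \cup X_\sigma$ and $X_0 \cap X_\sigma = f^{-1}(\partial\sigma)$. One first checks that the hypotheses pass to $Y_0$: the only links that change are those of proper faces $\tau \subsetneq \sigma$, and the missing part of each such link is a cone on $\lk_Y(\sigma) \cap \st_Y(\tau)$, whose connectivity is controlled by the hypothesis on $\lk \sigma$. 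An inner induction on the number of top-dimensional simplices of $Y$ then gives that $X_0$ is $(n-1)$-connected.

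It remains to establish (a) that $X_\sigma$ is $(n-1)$-connected and (b) that $f^{-1}(\partial\sigma)$ is $(n-2)$-connected, after which $X = X_0 \cup X_\sigma$ is $(n-1)$-connected by Mayer--Vietoris and van Kampen. Part (b) is a further application of the inductive hypothesis to the restricted simplicial map $f^{-1}(\partial\sigma) \to \partial\sigma$ whose target is a $(k-2)$-connected $(k-1)$-sphere; the link and fiber data are inherited from those over $Y$. For (a), the base $\overline{\sigma}$ is contractible, so $X_\sigma$ is built from the $(k-1)$-connected fiber $\abs{f^{-1}(\sigma)}$ by gluing on $f^{-1}(\partial\sigma)$ via a cofibration whose attaching map factors through the contraction of $\overline{\sigma}$; combining the connectivity from (b) with that of the top fiber through a join-style estimate yields $(n-1)$-connectedness.

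The main obstacle is the numerical bookkeeping. The hypotheses balance link-connectivity against fiber-connectivity through the identity $(n - k - 2) + (k - 1) = n - 3$, which becomes $n-1$ after the $+2$ shift one gets from a join-type decomposition; verifying that this balance genuinely propagates through each Mayer--Vietoris estimate, uniformly in $k$, and that the outer induction on $n$ and inner induction on the top simplices of $Y$ interlock without numerical loss, is the technical heart of Quillen's original argument in \cite{quillen73}. Apart from this accounting, the constituent steps are standard cofibration and covering-space arguments.
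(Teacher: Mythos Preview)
The paper does not prove this theorem; it is quoted from Quillen and used as a black box. So there is no proof in the paper to compare against, and your task was really to reconstruct Quillen's argument.

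Your sketch has a genuine gap at step~(a). In the paper's formulation the fiber $\abs{f^{-1}(\sigma)}$ \emph{is} the preimage $X_\sigma = f^{-1}(\overline{\sigma})$ of the closed simplex, so your sentence ``$X_\sigma$ is built from the $(k-1)$-connected fiber $\abs{f^{-1}(\sigma)}$ by gluing on $f^{-1}(\partial\sigma)$'' is circular: there is nothing to glue, and the hypothesis gives only that $X_\sigma$ is $(k-1)$-connected. For the Mayer--Vietoris step you need the pair $(X_\sigma, f^{-1}(\partial\sigma))$ to be $(n-1)$-connected. From the long exact sequence, using that $f^{-1}(\partial\sigma)$ is $(n-2)$-connected, this would require $\pi_i(f^{-1}(\partial\sigma)) \to \pi_i(X_\sigma)$ to be surjective for $k \le i \le n-1$, and nothing in the hypotheses gives that. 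The ``join-style estimate'' you invoke does not apply: $X_\sigma$ is not a join of the top fiber with anything, and in the paper's own applications (e.g.\ $E_{\Vcat}(n) \to \match(K_n)$) the fiber over a $k$-simplex is literally a $k$-sphere, so $\pi_k(X_\sigma) \ne 0$ while $k$ may be much smaller than $n$.

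There is a secondary problem with the inner induction: after deleting a maximal simplex $\sigma$, you assert that the link hypotheses pass to $Y_0$ because ``the missing part of each such link is a cone on $\lk_Y(\sigma) \cap \st_Y(\tau)$'', but $\lk_Y(\sigma) = \emptyset$ for a maximal $\sigma$, so this sentence does not say what you want; and removing a maximal face from $\lk_Y(\tau)$ can genuinely lower its connectivity. Quillen's actual proof does not proceed by peeling off maximal simplices; it filters by skeleta (equivalently, by height in the face poset) and controls the relative connectivity of successive stages directly. If you want to repair your approach, that is the filtration to use.
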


\begin{theorem}
Thompson's groups $T$, and $V$ are of type $F_\infty$.
\end{theorem}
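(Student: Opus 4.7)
The plan is to apply Corollary~\ref{cor:generic_proof} to the categories $\Tcat$ and $\Vcat$. The structural hypotheses (right-Ore, locally finite left-Garside family closed under factors, height function with finite sublevels) have already been established in Corollaries~\ref{cor:tcat_righ_ore} and~\ref{cor:vcat_righ_ore}. The condition \eqref{stab} is trivial in both cases since $\Tcat^\times(n,n) = \Z/n\Z$ and $\Vcat^\times(n,n) = \Sym_n$ are finite, hence of type $F_\infty$. All that remains is to verify \eqref{lk}, namely that the connectivity of $\abs{E(n)}$ tends to $\infty$ as $n \to \infty$.

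For $T$, this is essentially immediate. By Lemma~\ref{lem:link_match_cyclic}, $E_\Tcat(n)$ is isomorphic to the matching complex $\match(C_n)$ of the cyclic graph on $n$ vertices. Since $C_n$ is a subgraph of $K_n$ containing $L_n$, Lemma~\ref{lem:match_connectivity} applies directly and shows that the connectivity of $\match(C_n)$ grows (at least) like $\floor{n/4} - 1$.

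For $V$, the situation is subtler because by Lemma~\ref{lem:link_match_complete} we only have a simplicial map $f\colon E_\Vcat(n) \to \match(K_n)$ rather than an isomorphism. The natural tool is Quillen's fiber theorem (Theorem~\ref{thm:quillen}). I will verify its three hypotheses in turn. First, $\match(K_n)$ is highly connected: since $K_n \supseteq L_n$, Lemma~\ref{lem:match_connectivity} gives that $\match(K_n)$ is $(\floor{n/4}-1)$-connected. Second, for a $k$-simplex $\sigma$ of $\match(K_n)$, the link $\lk\sigma$ is naturally identified with $\match(K_{n - 2(k+1)})$, since removing a matching of size $k+1$ from $K_n$ leaves the complete graph on the remaining $n-2(k+1)$ vertices; by Lemma~\ref{lem:match_connectivity} again, this is $(\floor{(n-2k-2)/4} - 1)$-connected, which is at least $(n-\dim\sigma-2)$-connected provided $n$ is taken large enough. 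Third, the fiber $\abs{f^{-1}(\sigma)}$ over a $k$-simplex is a $k$-sphere by Lemma~\ref{lem:link_match_complete}, hence $(k-1)$-connected, exactly as required. Putting these bounds together, Quillen's theorem yields that $\abs{E_\Vcat(n)}$ is $(\lfloor n/c\rfloor - 1)$-connected for some absolute constant $c$, so the connectivity tends to $\infty$ and \eqref{lk} holds.

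The main obstacle is the bookkeeping in the $V$-case: one must check that the three connectivity bounds Quillen requires can all be made to grow linearly in $n$, and in particular that the link bound $(n-\dim\sigma-2)$ is met uniformly in the dimension of $\sigma$. This works because matchings in $K_n$ have size at most $\lfloor n/2\rfloor$, so $\dim\sigma \le n/2$, and $\match(K_{n-2(k+1)})$ is still roughly $(n-2k)/4$-connected; a routine comparison shows $(n-2k)/4 - 1 \ge (n - k - 2) - 1$ fails in general, so in fact one should apply Quillen with a sliding target connectivity $m(n) = \lfloor n/c \rfloor$ and check $\lk\sigma$ is $(m(n) - \dim\sigma - 2)$-connected, which does hold for $c$ sufficiently large. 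With this verified, Corollary~\ref{cor:generic_proof} concludes that both $T = \pi_1(\Tcat,1)$ and $V = \pi_1(\Vcat,1)$ are of type $F_\infty$.
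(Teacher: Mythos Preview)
Your proposal is correct and follows essentially the same route as the paper: apply Corollary~\ref{cor:generic_proof}, handle $T$ via Lemmas~\ref{lem:link_match_cyclic} and~\ref{lem:match_connectivity}, and handle $V$ by combining Lemma~\ref{lem:link_match_complete} with Quillen's Theorem~\ref{thm:quillen}, using that links in $\match(K_n)$ are again complete-graph matching complexes. Your final paragraph, where you recognize that the target connectivity in Quillen's theorem must be taken as $m(n)=\lfloor n/c\rfloor$ rather than $n$ itself, is more explicit than the paper's proof, which simply asserts that the connectivity of $E_{\Vcat}(n)$ goes to infinity without spelling out this calibration.
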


\begin{proof}
Using Corollary~\ref{cor:generic_proof} we need to show that the connectivity of the complexes $\abs{E(n)}$ goes to infinity as $n$ goes to infinity. We work with the simplicial complexes $E(n)$ instead. In the case of $T$ the complexes are matching complexes by Lemma~\ref{lem:link_match_cyclic} whose connectivity goes to infinity by Lemma~\ref{lem:match_connectivity}. In the case of $V$ the complexes map to matching complexes with good fibers by Lemma~\ref{lem:link_match_complete}. Noting that the link of a $k$-simplex in $\match(K_n)$ is isomorphic to  $\match(K_{n-2(k+1)})$, we can apply Theorem~\ref{thm:quillen} to see that the connectivity of $E_\Vcat$ goes to infinity as well.
\end{proof}

\subsection{Finiteness properties of braided Thompson groups}

We have alreadt seen that $\BFcat$, $\BTcat$, and $\BVcat$ are right-Ore. That they admit a height function and a left-Garside family follows via Proposition~\ref{prop:zappa-szep_conditions}, just as it did for $\Tcat$ and $\Vcat$. The braid groups $\BVcat^\times(n,n) = \grpd_{\mathit{BV}}(n,n)$ are of type $F$ by Corollary~\ref{cor:garside_complex} (and hence of type $F_\infty$). Consequently the finite index subgroups of pure braids $\BFcat^\times(n,n)$ and of cyclically-permuting braids $\BTcat^\times(n,n)$ are of type $F$ as well.

It remains to understand the complexes $\abs{E(n)}$. For that purpose, we will want to think of braid groups as mapping class groups. Let $D$ be a closed disc with $n$ punctures $p_1, \ldots, p_n$ which we can think of as distinguished points in the interior of $D$. The mapping class group of the $n$-punctured disc is
\[
\Homeo^+(D \smallsetminus \{p_1,\ldots, p_n\}, \partial D)/\Homeo^+_0(D \smallsetminus \{p_1,\ldots, p_n\}, \partial D)
\]
where $\Homeo^+(D \smallsetminus \{p_1,\ldots, p_n\}, \partial D)$ is the group of orientation-preserving homeomorphisms of $D \smallsetminus \{p_1,\ldots, p_n\}$ that fix $\partial D$ and $\Homeo^+_0(D \smallsetminus \{p_1,\ldots, p_n\}, \partial D)$ is the subgroup of homeomorphisms that are isotopic to the identity. It is well-known that the mapping class group of the $n$-punctured disc is isomorphic to the braid group, see for example \cite{kassel08}.

With this description in place, we can start to look at the complexes $\abs{E(n)}$. Let $fg \in E(n)$ with $f \in \eltry(-,n)$ and $g \in \grpd_{\mathit{BV}}(n,n)$. Regard the $n$ punctures $p_1, \ldots, p_n$ as the vertices of an $L_n$ embedded into $D$. As we have seen before, $f$ corresponds to a matching $M_f$ on $L_n$ which we now regard as a disjoint selection of the fixed arcs connecting pairs of adjacent punctures. The element $g$, regarded as a mapping class, acts on $M_f$ and we obtain a set $M_fg$ of disjoint arcs connecting some pairs of punctures. Such a collection of arcs is called an \emph{arc matching} in \cite{bux16}. Note that if $f \in \eltry(k,n)$ so that the arc matching consists of $n-k$ arcs, then removing the arcs from the punctured disc results in a $k$-punctured disc. The action of $\grpd_{\mathit{BV}}(k,k)$ from the left is just the action of the mapping class group of that $k$-punctured disc and in particular does nothing to $M_f$.

For a subgraph $\Gamma$ of $K_n$ the \emph{arc matching complex} $\arcmatch(\Gamma)$ is the simplicial complex whose $k$-simplices are sets of pairwise disjoint arcs connecting punctures with the condition that an arc can only connect two punctures if they are connected by an edge in $\Gamma$.

\begin{proposition}
\label{prop:link_arc_match}
There surjective morphisms of simplicial complexes
\begin{enumerate}
\item $E_\BFcat(n) \to \arcmatch(L_n)$\label{item:arc_map_BF}
\item $E_\BTcat(n) \to \arcmatch(C_n)$\label{item:arc_map_BT}
\item $E_\BVcat(n) \to \arcmatch(K_n)$\label{item:arc_map_BV}
\end{enumerate}
whose fiber over any $k$-simplex is the join of $k$ countable infinite discrete sets.
\end{proposition}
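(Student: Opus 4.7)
The plan is to promote the combinatorial matching map of Lemma~\ref{lem:link_match_complete} to a geometric one that records the isotopy class of the arc realizing each pairing, not just the combinatorial pairing itself. Every morphism in $\eltry(-,n)$ of the relevant category factors uniquely as $fg$ with $f \in \eltry_\thomcat(-,n)$ and $g$ a braid of the appropriate type (pure, cyclically permuting, or arbitrary), and Lemma~\ref{lem:link_match_linear} attaches to $f$ a matching $M_f$ of $L_n$. I would realize $M_f$ as a disjoint system of embedded arcs in the punctured disc $D$ connecting consecutive punctures, and define the proposed map by sending $\overline{fg}$ to the arc matching $g(M_f)$, with $g$ acting as a mapping class of $D$ relative to its boundary and punctures.

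For well-definedness I would unpack the Zappa--Szép rule: the equality $h \cdot fg = f'g'$ forces $h \cdot f = f'$ and $h^f g = g'$. The first identity says that $M_{f'}$ is obtained from $M_f$ by the tree-permutation $\pi(h)$, while $h^f$ is geometrically $h$ lifted through the splittings prescribed by $f$, so $h^f(M_{f'}) = h(M_f)$ in the punctured disc and therefore $g'(M_{f'}) = g(M_f)$. The image lands in $\arcmatch(L_n)$, $\arcmatch(C_n)$, or $\arcmatch(K_n)$ respectively because pure, cyclic, and arbitrary braids permute punctures only within the corresponding graph. Surjectivity is a change-of-coordinates argument: given an arc matching with $m$ arcs, take a standard forest with $m$ carets and realize the target arc matching by a braid of the required type, using the classical transitivity of the mapping class group of a punctured disc on disjoint arc systems of a fixed topological and combinatorial type.

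The main obstacle will be the fiber analysis. Fix an arc matching $A$ with $m$ arcs and a preimage $\overline{f_0 g_0}$; after using the left action by the invertible subcategory to move the carets of any representative to the standard positions of $f_0$, every other preimage takes the form $\overline{f_0 g}$ with $g(M_{f_0}) = A$, so that $g_0^{-1}g$ lies in the stabiliser $\mathrm{Stab}(M_{f_0})$ of the disjoint arc system inside the braid group. Cutting along the arcs yields a short exact sequence
\[
1 \to \Z^m \to \mathrm{Stab}(M_{f_0}) \to Q \to 1
\]
whose kernel is generated by Dehn twists around regular neighbourhoods of the individual arcs and whose quotient $Q$ consists of mapping classes of the cut surface together with permutations of the arcs. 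The residual action by the invertible subcategory on $\overline{f_0 g}$ absorbs exactly $Q$, since it realises both arbitrary braids on the complementary discs and all reorderings of the trees of $f_0$. What remains is an independent $\Z$ of Dehn-twist decorations per arc, and reading off the simplicial structure on the fiber gives the claimed join of $m$ countable infinite discrete sets.
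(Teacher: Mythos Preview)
Your approach is essentially the paper's: both define the map by $\overline{fg}\mapsto M_f\,g$ with $g$ acting as a mapping class of the punctured disc, both check that pure/cyclic/arbitrary braids force the image into $\arcmatch(L_n)$, $\arcmatch(C_n)$, $\arcmatch(K_n)$ respectively, and both obtain surjectivity from transitivity of the mapping class group on arc systems of a given combinatorial type. Your treatment is in fact more thorough than the paper's, which does not spell out well-definedness under the left $\cat^\times$-action and only computes the fiber over a vertex, leaving the join structure for higher simplices implicit. Your stabilizer/short-exact-sequence description of the fiber, with the cabling map $h\mapsto h^{f_0}$ providing a section that exactly realizes the left action by $\cat^\times(k,k)$, makes the join of $m$ discrete sets visible for a general $(m{-}1)$-simplex.

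One small but genuine slip in the $\BVcat$ case: the kernel of your cutting sequence is generated by the \emph{half-twists} supported in the regular neighbourhoods $N_j$ (so $\prod_j \Braid_2 \cong \Z^m$), not by the Dehn twists about $\partial N_j$, which give only the index-$2^m$ subgroup generated by the $\sigma_j^2$. With your stated kernel the quotient $Q$ carries an extra $(\Z/2)^m$ recording whether each arc is flipped, and this is \emph{not} absorbed by the left action, since cabled braids $h^{f_0}$ keep each doubled pair of strands parallel and never realise a half-twist within a pair. Replacing ``Dehn twists around regular neighbourhoods'' by ``mapping classes supported in the regular neighbourhoods'' (equivalently $\prod_j\Braid_2$) repairs this; the image of the left $\Braid_k$-action under collapsing is then exactly $Q$, and the residual $\Z^m$ gives the desired join. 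In the $\BFcat$ and $\BTcat$ cases the distinction disappears, since the half-twists are not pure and the kernel inside the relevant subgroup really is generated by the Dehn twists.
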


\begin{proof}
The product $fg \in E(n)$ is taken to the arc matching $M_fg$ as described above. Since $\grpd_{\mathit{BF}}(n,n)$ takes every puncture to itself, the map \eqref{item:arc_map_BF} maps onto $\arcmatch(L_n)$. Similarly, since $\grpd_{\mathit{BT}}(n,n)$ cyclically permutes the punctures, the map \eqref{item:arc_map_BT} maps into $\arcmatch(C_n)$. Surjectivity is clear.

To describe the fibers consider a disc $D'$ containing $p_i$ and $p_{i+1}$ but none of the other punctures and let $\beta$ be a braid that is arbitrary inside $D'$ but trivial outside. Then $\lambda_i \beta$ maps to the same arc (= vertex of $\arcmatch(K_n)$) irrespective of $\beta$. Thus the fiber over this vertex is the mapping class group of $D' \smallsetminus \{p_i, p_{i+1}\}$ in the case of $\BVcat$ and is the pure braid group of $D' \smallsetminus \{p_i,p_{i+1}\}$ in the cases of $\BFcat$ and $\BTcat$. In either case it is a countable infinite discrete set.
\end{proof}

The connectivity properties of arc matching complexes have been studied in \cite{bux16}. We summarize Theorem~3.8, Corollary~3.11, and the remark in Section~3.4 in the following theorem. It applies to arc matching complexes not only on disks but on arbitrary surfaces with (possibly empty) boundary.

\begin{theorem}
\label{thm:arc_matching_connectivity}
\begin{enumerate}
\item $\arcmatch(K_n)$ is $(\nu(n)-1)$-connected,
\item $\arcmatch(C_n)$ is $(\eta(n-1)-1)$-connected,
\item $\arcmatch(L_n)$ is $(\eta(n)-1)$-connected,
\end{enumerate}
where $\nu(n) = \floor{\frac{n-1}{3}}$ and $\eta(n) = \floor{\frac{n-1}{4}}$.
\end{theorem}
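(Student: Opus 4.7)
The plan is to prove all three connectivity bounds by a simultaneous induction on $n$, combining a cutting construction with combinatorial Morse theory in the spirit of Lemma~\ref{lem:morse}. The base cases (small $n$, where the bounds reduce to $-1$-connectedness, i.e.\ non-emptiness) are immediate as soon as $\Gamma$ has at least one edge. For the induction step I would first record the following cutting principle: if $\sigma = \{\alpha_0,\dots,\alpha_k\} \in \arcmatch(\Gamma)$, then cutting the punctured disc along the $\alpha_i$ produces a disjoint union of smaller punctured discs, and the link $\lk(\sigma)$ is canonically the join of the arc matching complexes of these pieces (with the induced edge-restrictions inherited from $\Gamma$). This reduces the induction step to controlling how punctures can be split by the cuts.

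Second, I would set up a combinatorial Morse function on $\arcmatch(\Gamma)$ by ordering arcs according to a complexity measure. A natural choice is to fix a puncture $p_1$ and order vertices by $\rho(\alpha) = $ (distance in $\Gamma$ from $\alpha$ to $p_1$, with ties broken by some auxiliary length), so that an arc is descending-linked to another precisely when it is disjoint and strictly simpler. The descending link of a vertex $\alpha$ is then a subcomplex of the arc matching complex obtained after cutting along $\alpha$; by the cutting principle above it is itself (up to minor boundary issues) an arc matching complex on a graph with strictly fewer vertices, to which the inductive hypothesis applies. Applying the Morse Lemma (Lemma~\ref{lem:morse}) propagates the connectivity bound from these descending links to the full complex.

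The arithmetic of the induction step produces the two different constants $\nu$ and $\eta$. In the $K_n$ case, each arc is incident to two punctures and the descending link is (up to details) $\arcmatch(K_{n-2})$, so the bound improves by one every time $n$ increases by three, giving $\nu(n)=\lfloor(n-1)/3\rfloor$. In the $L_n$ and $C_n$ cases the admissible arcs are far more restricted: cutting along an arc between adjacent punctures not only removes two punctures but also truncates the allowed neighbour relation, so one loses an additional factor in the recursion and obtains $\eta(n)=\lfloor(n-1)/4\rfloor$. A convenient alternative for $K_n$ is to apply Belk--Forrest (Theorem~\ref{thm:belk_forrest}) to the simplex spanned by the arcs $\{p_{2i-1},p_{2i}\}$, which any other arc meets at at most two punctures; this gives a slightly weaker constant but avoids the Morse argument in that case.

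The main obstacle is the book-keeping in the inductive step for $L_n$ and $C_n$: cutting the disc along an arc of $L_n$ between non-adjacent punctures (which appears as one traverses a simplex in the descending link argument) produces pieces whose induced edge-graphs are not themselves linear or cyclic. To close the induction one must therefore work within a slightly larger class of punctured surfaces with prescribed ``boundary combinatorics'' and strengthen the inductive statement accordingly. This is precisely the technical content of Section~3 of \cite{bux16}, where the three bounds are proved in a uniform framework; here I would simply invoke those results rather than reproducing them.
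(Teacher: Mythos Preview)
The paper does not prove this theorem at all: it is stated as a summary of Theorem~3.8, Corollary~3.11, and the remark in Section~3.4 of \cite{bux16}, and is simply cited. Your proposal, after sketching an approach, also concludes by invoking \cite{bux16}, so in the end you and the paper agree.

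That said, the sketch you give before deferring to \cite{bux16} has real problems, and since you present it as a plausible plan it is worth flagging them. First, the Belk--Forrest idea for $\arcmatch(K_n)$ does not transfer from matching complexes to arc matching complexes: two arcs in the disc are adjacent in $\arcmatch(K_n)$ when they are \emph{disjoint as isotopy classes}, not merely when their endpoint-pairs are disjoint. An arbitrary arc can fail to be disjoint from all of the arcs $\{p_{2i-1},p_{2i}\}$ simultaneously, so there is no reason the proposed simplex is a $k$-ground for any useful $k$. Second, your Morse function $\rho(\alpha)$ based on graph-distance of endpoints is constant on infinitely many vertices (there are infinitely many isotopy classes of arcs joining any fixed pair of punctures), so it is not a Morse function in the sense of Lemma~\ref{lem:morse}, and sublevel sets are not cocompact or even finite. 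The actual argument in \cite{bux16} uses a ``badness'' function that measures intersection number with a fixed system of reference arcs and reduces badness via surgery; this is the key idea you are missing. Third, cutting along an arc does not give $\arcmatch(K_{n-2})$ on the nose: the two endpoints become part of a new boundary component rather than disappearing, so one must work in the broader setting of punctured surfaces with boundary (as you correctly note at the end). So your instinct to hand off to \cite{bux16} is the right call; just be aware that the preliminary sketch is not a viable outline of that proof.
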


\begin{theorem}
\label{thm:braided_finiteness_properties}
The braided Thompson groups $\mathit{BF}$, $\mathit{BT}$, and $\mathit{BV}$ are of type $F_\infty$.
\end{theorem}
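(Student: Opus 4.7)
The plan is to verify the hypotheses of Corollary~\ref{cor:generic_proof} for each of $\BFcat$, $\BTcat$, $\BVcat$. The Ore property was established in the previous section, and existence of a height function together with a locally finite left-Garside family closed under factors follows from Proposition~\ref{prop:zappa-szep_conditions} applied to the left-Garside family of Corollary~\ref{cor:f_garside_family}. The stabilizer condition (\textsc{stab}) is immediate: $\BVcat^\times(n,n) = \Braid_n$ is of type $F$ by Corollary~\ref{cor:garside_complex}, while $\BTcat^\times(n,n)$ and $\BFcat^\times(n,n)$ are the preimages under $\pi\colon \Braid_n \to \Sym_n$ of $\Z/n\Z$ and of the trivial group respectively, hence finite-index subgroups of $\Braid_n$, so also of type $F$. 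The real work is therefore in verifying (\textsc{lk}), i.e.\ that the connectivity of $\abs{E(n)}$ tends to infinity.

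For the link condition, I would use the surjections
\[
E_\BFcat(n) \to \arcmatch(L_n), \qquad E_\BTcat(n) \to \arcmatch(C_n), \qquad E_\BVcat(n) \to \arcmatch(K_n)
\]
provided by Proposition~\ref{prop:link_arc_match}. The bases are $(\eta(n)-1)$-, $(\eta(n-1)-1)$-, and $(\nu(n)-1)$-connected respectively by Theorem~\ref{thm:arc_matching_connectivity}, so in each case the base is $m_n$-connected with $m_n \to \infty$. Each fiber over a $k$-simplex is a join of countably infinite discrete sets and is therefore highly connected (certainly $(k-1)$-connected, which is what Theorem~\ref{thm:quillen} will need).

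To transport the connectivity from the base to $E(n)$, I would apply Quillen's fiber theorem (Theorem~\ref{thm:quillen}). The remaining hypothesis is connectivity of the links of simplices in the arc matching complex. Here the key observation is that the link of a $k$-simplex $\sigma$ in $\arcmatch(\Gamma)$ is canonically an arc matching complex on the surface obtained by cutting the disc along the arcs of $\sigma$; in the cases at hand this is again an arc matching complex on $L_{n'}$, $C_{n'}$, or $K_{n'}$ with $n' = n - 2(k+1)$ (up to boundary components that do not affect the combinatorics). Theorem~\ref{thm:arc_matching_connectivity} is already formulated for arbitrary surfaces, so the same bounds apply to the links and provide the required $(m_n - \dim\sigma - 1)$-connectivity. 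Quillen's theorem then yields that $\abs{E(n)}$ is roughly $m_n$-connected, so the connectivity tends to infinity.

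The main potential obstacle is bookkeeping at the interface between Proposition~\ref{prop:link_arc_match} and Theorem~\ref{thm:quillen}: one needs the fiber connectivity bound, the base connectivity bound, and the link connectivity bound to be simultaneously valid, and in particular that the bounds on links of simplices in arc matching complexes on the disc survive the passage to subsurfaces. Once this is confirmed, Corollary~\ref{cor:generic_proof} applies and gives type $F_\infty$ for all three groups.
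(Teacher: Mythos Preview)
Your proposal is correct and follows essentially the same route as the paper: verify the hypotheses of Corollary~\ref{cor:generic_proof}, use Proposition~\ref{prop:link_arc_match} to map $E(n)$ onto arc matching complexes, and then apply Quillen's Theorem~\ref{thm:quillen} together with Theorem~\ref{thm:arc_matching_connectivity}, using that links of simplices in arc matching complexes are again arc matching complexes on the cut subsurface. One small imprecision: for $L_n$ and $C_n$ the link of a $k$-simplex is not literally $\arcmatch(L_{n'})$ or $\arcmatch(C_{n'})$ but an arc matching complex on a subsurface with a different underlying graph (a union of paths); however, as you note, the connectivity bounds of Theorem~\ref{thm:arc_matching_connectivity} are stated for arbitrary surfaces and suffice here.
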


\begin{proof}
We want to apply Corollary~\ref{cor:generic_proof}. By Proposition~\ref{prop:link_arc_match} the complexes $E(n)$ map onto arc matching complexes and we want to apply Theorem~\ref{thm:quillen}. To do so, we need to observe that the link of a $k+1$-simplex on an arc matching complex on a surface with $n$ punctures is an arc matching complex with $n-2k$ punctures where the $k$ arcs connecting two punctures have been turned into boundary components. Putting these results together shows that the connectivity properties of $E(n)$ go to infinity with $n$ by Theorem~\ref{thm:arc_matching_connectivity}.
\end{proof}

\subsection{Absence of finiteness properties}

Theorem~\ref{thm:generic_proof} gives a way to prove that certain groups are of type $F_n$. If the group is not of type $F_n$ one of the hypotheses fails. We will now discuss to which extent the construction is (un-)helpful in proving that the group is not of type $F_n$ depending on which hypothesis fails.

In the first case the groups $\cat^\times(x,x)$ are not of type $F_n$ (even for $\rho(x)$ large). In this case the general part of Brown's criterion, Theorem~\ref{thm:browns_criterion_negative}, cannot be applied. Thus the whole construction from Section~\ref{sec:proof_scheme} is useless for showing that $\pi_1(\grpd_\cat,*)$ is not of type $F_n$. An example of this case are the groups $\mathcal{T}(B_*(\mathcal{O}_S))$ treated in \cite[Theorem~8.12]{witzel16a}. The proof redoes part of the proof that the groups $\cat^\times(x,x)$, which are the groups in $B_n(\mathcal{O}_S)$ in this case, are not of type $F_n = F_{\abs{S}}$.

In the second case the complexes $E(x)$ are not (even asymptotically) $(n-1)$-connected. In this case Brown's criterion, Theorem~\ref{thm:browns_criterion_negative}, can in principle be applied, but not by using just Morse theory. An example of this case is the Basilica Thompson group from Section~\ref{sec:graph_rewriting} which is not finitely presented \cite{witzel16}, so $n=2$. A morphism in $\rewgpcat_{e \to R} = \rewcat_{e \to R} \bowtie \grpd_{\text{graph}}$ is declared to be elementary if there are edges $\{e_1,\ldots,e_k\}$ of $G$ such that $f = \lambda_{e_1}\ldots\lambda_{e_k}$. The function $\rho \colon \rewcat_{e \to R} \to \N$ is the number of edges of a graph. The basepoint $*$ is the Basilica graph $G$.

The connectivity-assumption of Theorem~\ref{thm:generic_proof} is violated because the $\rewgpcat_{e \to R}$-com\-po\-nent of $G$ contains graphs $H$ with arbitrarily many edges for which $E(H)$ is not simply connected. Examples of such graphs are illustrated in Figure~\ref{fig:bad_graph}. In these examples $E(H)$ has four vertices, two vertices $v_{ll}$, $v_{ul}$ corresponding to the loops on the left and two vertices $v_{lr}$ and $v_{ur}$ corresponding to the loops on the right. The left vertices are connected to the right vertices but not to each other and neither are the right vertices. Thus $E(H)$ is a circle $v_{ll}, v_{lr}, v_{ul}, v_{ur}$ and is not simply connected.

\begin{figure}[htb]
\begin{center}
\begin{tikzpicture}[thick]
\begin{scope}[every node/.style={circle, fill, inner sep=2pt}]
\node (oo) at (1,0) {};
\node (o) at (0,0) {};
\node (p) at (-.6,.6) {};
\node (q) at (-.6,-.6) {};
\node (kk) at (2,0) {};
\node (k) at (3,0) {};
\node (m) at (3.6,.6) {};
\node (l) at (3.6,-.6) {};
\end{scope}
\node at (1.5,0) () {$\cdots$};
\begin{scope}
\path (o) edge[->-,out=0-\picangle, in = 180+\picangle, below] (oo);
\path (oo) edge[->-,out=180-\picangle, in = 0+\picangle, below] (o);
\path (o) edge[->-,out=180-\picangle, in = 60-\picangle, above right] (p);
\path (p) edge[->-,out=120-\loopangle, in = 120+\loopangle, min distance = 1cm, looseness=5, above left] (p);
\path (p) edge[->-,out=180+\picangle, in = 180-\picangle, left] (q);
\path (q) edge[->-,out=240-\loopangle, in = 240+\loopangle, min distance = 1cm, looseness=5, below left] (q);
\path (q) edge[->-,out=300+\picangle, in = 180+\picangle, below right] (o);

\path (kk) edge[->-,out=0-\picangle, in = 180+\picangle, below] (k);
\path (k) edge[->-,out=180-\picangle, in = 0+\picangle, below] (kk);
\path (k) edge[->-,out=0-\picangle, in = 240-\picangle, below left] (l);
\path (l) edge[->-,out=300-\loopangle, in = 300+\loopangle, min distance = 1cm, looseness=5, below right] (l);
\path (l) edge[->-,out=0+\picangle, in = 0-\picangle, right] (m);
\path (m) edge[->-,out=60-\loopangle, in = 60+\loopangle, min distance = 1cm, looseness=5, above right] (m);
\path (m) edge[->-,out=120+\picangle, in = 0+\picangle, above left] (k);
\end{scope}
\end{tikzpicture}
\end{center}
\caption{Arbitrarily large graphs $H$ with $E(H)$ not simply connected.}
\label{fig:bad_graph}
\end{figure}
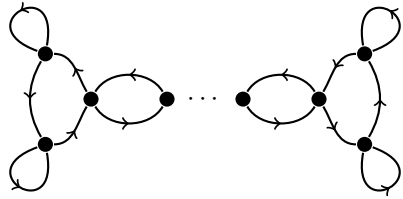

Looking into the proof of Theorem~\ref{thm:generic_proof} we can compare directly what the non-simple connectedness of $E(H)$ tells us and what is needed to apply Brown's criterion (Theorem~\ref{thm:browns_criterion_negative}) in order to prove that the group is not of type $F_n$. To apply Theorem~\ref{thm:browns_criterion_negative}, one needs to show that for every $m$ there is an arbitrarily large $n$ such that passing from $X_{\rho<m}$ to $X_{\rho<n+1}$ a non-trivial $1$-sphere in $X_{\rho < m}$ is filled in. The assumption that $E(H)$ is not simply connected for $\rho(H) = n$ translates via the Morse argument to the statement that when passing from $X_{\rho <n}$ to $X_{\rho < n+1}$ either a non-trivial $1$-sphere in $X_{\rho < n}$ is filled in, or a non-trivial $2$-sphere is created. The proof in \cite{witzel16} that the Basilica-Thompson group $T_B$ is not finitely presented, therefore needs to rule out the second possibility and also show that the $1$-sphere that is filled in was non-trivial already in $X_{\rho<m}$.


\bibliographystyle{amsalpha}
\bibliography{ore_cats}

\end{document}